\newcommand{\R}{\mathbb{R}}
\newcommand{\C}{\mathbb{C}}
\newcommand{\N}{\mathbb{N}}
\renewcommand{\S}{\mathbb{S}}
\newcommand{\1}{\mathds{1}}
\newcommand{\I}{\mathrm{i}}
\newcommand{\E}{\mathrm{e}}
\newcommand{\D}{\mathrm{d}}
\renewcommand{\Re}{\mathrm{Re}\,}
\renewcommand{\coloneq}{~\raise.35pt\hbox{:}\kern-3.5pt=}
\renewcommand{\eqcolon}{=\kern-3.5pt\raise.35pt\hbox{:}~}
\DeclareMathOperator{\Hyp}{Hyp}
\newcommand{\etal}{\textsl{et al.\ }}
\theoremstyle{plain}
\newtheorem{proposition}{Proposition}[section]
\newtheorem{corollary}[proposition]{Corollary}
\newtheorem{theorem}[proposition]{Theorem}
\newtheorem*{theorem*}{Theorem}
\newtheorem{lemma}[proposition]{Lemma}
\newtheorem*{conjecture*}{Conjecture}
\theoremstyle{definition}
\newtheorem{definition}[proposition]{Definition}
\newtheorem{remark}[proposition]{Remark}
\newtheorem{remarks}[proposition]{Remarks}
\newtheorem*{remarks*}{Remarks}
\newcounter{smallenum}
\newenvironment{SE}{\begin{list}{{(\rm\arabic{smallenum})}}{%
\setlength{\topsep}{0mm}\setlength{\parsep}{0mm}\setlength{\itemsep}{0mm}%
\setlength{\labelwidth}{2em}\setlength{\leftmargin}{1.6em}\usecounter{smallenum}%
}}{\end{list}}
\begin{document}
\title[Strong smoothing for the homogeneous Boltzmann Equation]{Strong smoothing for the non-cutoff homogeneous Boltzmann equation for Maxwellian molecules with Debye-Yukawa type interaction}

\author{Jean-Marie Barbaroux}

\author{Dirk Hundertmark}

\author{Tobias Ried}

\author{Semjon Vugalter}

\date{16th December 2015}
\thanks{\copyright~2015 by the authors. Faithful reproduction of this article, in its entirety, by any means is permitted for non-commercial purposes}
\begin{abstract}
We study weak solutions of the homogeneous Boltzmann equation for Maxwellian molecules with a logarithmic singularity of the collision kernel for grazing collisions. Even though in this situation the Boltzmann operator enjoys only a very weak coercivity estimate, it still leads to strong smoothing of weak solutions in accordance to the smoothing expected by an analogy with a logarithmic heat equation.  
\end{abstract}
\maketitle
{\noindent
2010 \textit{Mathematics Subject Classification}: 35D10 (Primary); 
 35B65, 35Q20, 82B40 (Secondary) \\ 
\textit{Keywords}: Smoothing of weak solutions, Non-cutoff homogeneous Boltzmann equation, Debye-Yukawa potential, Maxwellian molecules
}
{\hypersetup{linkcolor=black}
\tableofcontents}
\section{Introduction and main results}\label{sec:introduction}

We study the regularity of weak solutions of the Cauchy problem
\begin{align}\label{eq:cauchyproblem}	
\begin{cases}
\partial_t f = Q(f,f) & \\
f|_{t=0} = f_0 &
\end{cases}
\end{align}
for the fully nonlinear homogeneous Boltzmann equation in $d\geq 2$ dimensions 
with initial datum $f_0 \geq 0$ having finite mass, energy and entropy, $f_0\in L^1_2(\R^d) \cap L\log L(\R^d)$.

The bilinear Boltzmann collision operator $Q$ is given by
\begin{align}\label{eq:Boltzmann-kernel}
	Q(g,f) = \int_{\R^d} \int_{\S^{d-1}} B\left(|v-v_*|,\frac{v-v_*}{|v-v_*|} \cdot \sigma\right) \left( g(v'_*) f(v') - g(v_*) f(v) \right) \, \mathrm{d}\sigma \mathrm{d} v_*.
\end{align}
Here we use the $\sigma$-representation of the collision process, in which
\begin{align*}
	v' = \frac{v+v_*}{2} + \frac{|v-v_*|}{2}\sigma, \quad v'_* = \frac{v+v_*}{2} - \frac{|v-v_*|}{2}\sigma, \quad \text{for } \sigma \in \S^{d-1}.
\end{align*}

The collision kernel $B$ takes into account the detailed scattering process by which the particles change their velocities, which, in a dilute gas, can be assumed to involve only two particles at a time (\emph{binary} collisions). In the important case of inverse-power-law interactions $\Phi(r) = r^{1-n}$, $n>2$, it is of the form  $B(|v-v_*|, \cos\theta) = |v-v_*|^{\gamma} b(\cos\theta)$, $\gamma=\frac{n-(2d-1)}{n-1}$, where $\cos\theta = \frac{v-v_*}{|v-v_*|} \cdot \sigma$ and $b$ is the so called \emph{angular collision kernel}. Even though an explicit formula for  $b$ is not known, one can show \cite{Cer88} that $b$ is smooth away from the singularity, non-negative, and has the non-integrable singularity
	\begin{align}\label{eq:powerlaw}
		\sin^{d-2}\theta \, b(\cos\theta) \stackrel{\theta\to 0}{\sim} \frac{K}{\theta^{1+2\nu}}
	\end{align}
	for some $K>0$ and $0<\nu<1$. 
	
It has been noted for some time now, see \cite{ADVW00} and the references therein, that the divergence \eqref{eq:powerlaw} leads to a coercivity in the Boltzmann collision kernel of the form 
\begin{align}\label{eq:intuition1}
	-Q(g,f) \approx (-\Delta)^{\nu} f + \text{lower order terms}, 
\end{align}
that is, it behaves similar to a singular integral operator with leading term proportional to a fractional Laplacian. 
If the interaction is instead of Debye-Yukawa type
\begin{align}\label{eq:debyeyukawa}
	\Phi(r) = r^{-1} \E^{-r^{s}}, \quad 0<s<2.
\end{align} 
the angular collision cross-section $b(\cos\theta)$ has a much weaker non-integrable singularity of logarithmic type 
\begin{align}\label{eq:singularity}
	\sin^{d-2}\theta \, b(\cos\theta) \sim \kappa \theta^{-1} \left(\log\theta^{-1}\right)^{\mu}
\end{align}
for grazing collisions $\theta\to 0$, with some $\kappa, \mu >0$. For example in dimension $d=3$ one has $\mu=\frac{2}{s}-1$ \cite{MUXY09}. Going through the calculations of \cite{MUXY09} one can check that $\mu=\frac{d-2}{s} -1$ in arbitrary dimension $d\geq2$.
In this case, the coercive effects are much weaker and of the form 
\begin{align}\label{eq:intuition2}
	-Q(g,f) \approx (\log(1-\Delta))^{\mu+1} f + \text{lower order terms},
\end{align}
as was noticed in \cite{MUXY09}, see also appendix \ref{app:coercivity}. 
In this work, as in \cite{BHRV15,MUXY09}, we consider only the so-called Maxwellian molecules approximation, where the collision kernel does not depend on $v-v_*$, but only on the collision angle $\theta$.

Even though $b$ has a singularity, the quantity 
\begin{align}\label{eq:momentumtransfer}
	\int_{0}^{\frac{\pi}{2}} \sin^d \theta \,b(\cos\theta)\,\D\theta <\infty,
\end{align}
which is related to the momentum transfer in the scattering process, is finite in both cases \eqref{eq:powerlaw} and \eqref{eq:debyeyukawa}.

We will also assume that $b(\cos\theta)$ is supported on angles $\theta \in [0, \frac{\pi}{2}]$, which is always possible due to symmetry properties of the Boltzmann collision operator.
 
In this article, we use the following notations and conventions: Given a vector $v\in\R^d$ and $\alpha\geq0$, let  $\langle v \rangle_{\alpha} \coloneq  (\alpha+ |v|^2)^{1/2}$, and $\langle v\rangle \coloneq  \langle v \rangle_1$. For $p\geq 1$ and $s\in\R$ the weighted $L^p$ spaces are given by
\begin{align*}
	L^p_{s}(\R^d) \coloneq  \left\{ f\in L^p(\R^d): \langle \cdot \rangle^{s} f \in L^p(\R^d) \right\},
\end{align*}
equipped with the norm 
\begin{align*}
	\| f\|_{L^p_s(\R^d)} = \left( \int_{\R^d} |f(v)|^p \langle v \rangle^{s p} \,\mathrm{d}v \right)^{1/p}.
\end{align*}
We will also make use of the weighted ($L^2$-based) Sobolev spaces 
\begin{align*}
	H^k_{\ell}(\R^d) = \left\{ f\in \mathcal{S}'(\R^d): \langle \cdot \rangle^{\ell} f \in H^k(\R^d) \right\}, \quad k,\ell \in \R,
\end{align*}
where $H^k(\R^d)$ are the usual Sobolev spaces
  given by 	$H^k (\R^d) = \left\{ f\in \mathcal{S}'(\R^d): \langle \cdot \rangle^{k} \hat{f} \in L^2(\R^d) \right\}$, for $k  \in \R$. We also use $H^{\infty}(\R^d) = \bigcap_{k\geq 0} H^{k}(\R^d)$. 
The inner product on $L^2(\R^d)$ is given by $\langle f,g\rangle = \int_{\R^d} \overline{f(v)} g(v) \,\mathrm{d}v$.
Further, closely related to the functions with finite (negative) entropy $\mathcal{H}(f)\coloneq \int_{\R^d} f\log f\, \mathrm{d}v$ is the space
\begin{align*}
	L\log L(\R^d) = \left\{f:\R^d\to\R \text{ measurable}: \|f\|_{L\log L} = \int_{\R^d} |f(v)| \log\left(1+|f(v)|\right)\,\mathrm{d}v < \infty\right\}.
\end{align*}

We use the following convention regarding the Fourier transform of a function $f$ in this article,
\begin{align*}
	(\mathcal{F} f) (\eta) = \hat{f}(\eta) = \int_{\R^d} f(v) \, \E^{-2\pi\I v\cdot \eta} \,\D v.
\end{align*}
We denote $D_v=-\tfrac{\I}{2\pi}\nabla$ and for a suitable function $G:\R^d \to \C$ we define the operator $G(D_v)$ as a Fourier multiplier, that is, 
\begin{align*}
	G(D_v)f\coloneq  \mathcal{F}^{-1}[ G\hat{f}].
\end{align*}
The precise notion of a solution of the Cauchy problem \eqref{eq:cauchyproblem} is given by
\begin{definition}[Weak Solutions of the Cauchy Problem \eqref{eq:cauchyproblem} \cite{Ark81,Vil98}]\label{def:weaksol}
	Assume that the initial datum $f_0$ is in $L^1_2(\R^d)\cap L\log L(\R^d)$. $f: \R_+ \times \R^d \to \R$ is called a weak solution to the Cauchy problem \eqref{eq:cauchyproblem}, if it satisfies the following conditions\footnote{Throughout the text, whenever not explicitly mentioned, we will drop the dependence on $t$ of a function, i.e. $f(v)\coloneq  f(t,v)$ etc}:
	\begin{enumerate}[label=(\roman*)]
		\item $f\geq 0$,\,\,\,  $f\in\mathcal{C}(\R_+; \mathcal{D}'(\R^d))\cap  L^{\infty}(\R_+;L^1_2(\R^d)\cap L \log L(\R^d))$  
		\item $f(0,\cdot) = f_0$
		\item For all $t\geq 0$, mass is conserved, $\int_{\R^d} f(t,v) \, \mathrm{d} v = \int_{\R^d} f_0(v) \, \mathrm{d}v$, kinetic energy is conserved, $\int_{\R^d} f(t,v)\, v^2 \, \mathrm{d} v = \int_{\R^d} f_0(v) \, v^2 \, \mathrm{d}v$, and the entropy is increasing, that is, $\mathcal{H}(f)$ is decreasing, $\mathcal{H}(f(t,\cdot))\leq \mathcal{H}(f_0)$.
		\item For all $\varphi\in\mathcal{C}^1(\R_+; \mathcal{C}_0^{\infty}(\R^d))$ one has
		\begin{align}\label{eq:weakformulation}
		\begin{split}
			&\langle f(t,\cdot), \varphi(t,v) \rangle - \langle f_0 , \varphi(0,\cdot)\rangle - \int_0^t \langle f(\tau, \cdot) \partial_{\tau}\varphi(\tau,\cdot)\rangle \,  \mathrm{d}\tau \\
			&\quad = \int_0^t \langle Q(f,f)(\tau, \cdot), \varphi(\tau,\cdot) \rangle \, \mathrm{d}\tau, \quad \text{for all } t\geq 0,
			\end{split}
		\end{align}
		where the latter expression involving $Q$ is defined for test functions $\varphi \in W^{2,\infty}(\R^d)$ by
		\begin{align*}
			&\langle Q(f,f), \varphi \rangle \\
			&\quad = \frac{1}{2}\int_{\R^{2d}}\int_{\S^{d-1}} b\left(\frac{v-v_*}{|v-v_*|}\cdot \sigma\right) f(v_*) f(v) \left( \varphi(v')+\varphi(v'_*) - \varphi(v) - \varphi(v_*)\right) \, \mathrm{d}\sigma \mathrm{d}v\mathrm{d}v_*.
		\end{align*}
	\end{enumerate}
\end{definition}
Weak solutions of the above type of the Cauchy problem \eqref{eq:cauchyproblem} for the  homogeneous Boltzmann equation are known to exist due to results by \textsc{Arkeryd} \cite{Ark72,Ark81}, which were later extended by \textsc{Villani} \cite{Vil98}. They are known to be unique \cite{TV99}, see  also the review articles \cite{Des01,MW99}.

In \cite{MUXY09} it has been shown that weak solutions to the Cauchy problem \eqref{eq:cauchyproblem} with Debye-Yukawa type interactions enjoy an $H^{\infty}$ smoothing property, i.e. starting with arbitrary initial datum $f_0\geq 0$, $f_0\in L^1_2\cap L\log L$, one has $f(t,\cdot) \in H^{\infty}$ for any positive time $t>0$. 

Based upon our recent proof \cite{BHRV15} of Gevrey smoothing for the homogeneous Boltzmann equation with Maxwellian molecules and angular singularity of the inverse-power law type \eqref{eq:powerlaw}, we can show a stronger than $H^{\infty}$ regularisation property of weak solutions in the Debye-Yukawa case. 

To this aim we define the function spaces
\begin{definition}
	Let $\mu>0$. A function $f\in H^{\infty}(\R^d)$ is defined to be in the space $\mathcal{A}^{\mu}(\R^d)$ if there exist constants $C>0$ and $b>0$ such that 
	\begin{align}\label{eq:derivatives}
		\left\| \partial^{\alpha} f \right\|_{L^2} \leq C^{|\alpha|+1} \E^{b |\alpha|^{1+1/\mu}} \quad \text{for all } \alpha\in\N_0^d.
	\end{align}
\end{definition}

For $\mu>0$ we define the family of function spaces, parametrised by $\tau>0$,
\begin{align*}
	\mathcal{D}\left( \E^{\tau \left(\log\langle D\rangle\right)^{\mu+1}}: L^2(\R^d) \right) \coloneq \left\{ f\in L^2(\R^d): \E^{\tau \left(\log\langle D\rangle\right)^{\mu+1}}f \in L^2(\R^d) \right\}
\end{align*}
\begin{remark}\label{rem:functionspace}
	Let $\mu>0$. Then 
	\begin{align*}
		\mathcal{A}^{\mu}(\R^d) = \bigcup_{\tau>0} \mathcal{D}\left( \E^{\tau \left(\log\langle D\rangle\right)^{\mu+1}}: L^2(\R^d) \right).
	\end{align*}
	The proof is rather technical and is deferred to Appendix \ref{app:analyticity}.
\end{remark}

In view of the coercivity property \eqref{eq:intuition2} and the regularisation properties of the logarithmic heat equation 
\begin{align}\label{eq:heat}
	\partial_t f = (\log(1-\Delta))^{\mu+1} f,
\end{align}
	the spaces $\mathcal{A}^{\mu}$, through their Fourier characterisation in Remark \ref{rem:functionspace}, capture exactly the gain of regularity that is to be expected for the Boltzmann equation with Debye-Yukawa type angular singularity. 
Indeed, our main result is
	
\begin{theorem}\label{thm:main} Let $f$ be a weak solution of the Cauchy problem \eqref{eq:cauchyproblem} for the homogeneous Bolzmann equation for Maxwellian molecules with angular collision kernel satisfying \eqref{eq:singularity} and \eqref{eq:momentumtransfer}, and initial datum $f_0\geq 0$, $f_0\in L^1_2(\R^d) \cap L\log L(\R^d)$. Then for any $T_0>0$ there exist $\beta,M>0$ such that 
	\begin{align*}
		\E^{\beta t \left(\log\langle D_v\rangle \right)^{\mu+1}} f(t,\cdot) \in L^2(\R^d)
		\intertext{and}
		\sup_{\eta\in\R^d} \E^{\beta t \left(\log\langle D_v\rangle \right)^{\mu+1}} |\hat{f}(t,\eta)| \leq M
	\end{align*} 
	for all $t\in(0,T_0]$. In particular, $f(t,\cdot) \in\mathcal{A}^{\mu}$ for all $t>0$.
\end{theorem}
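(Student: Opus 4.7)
The plan is to mimic the strategy developed in \cite{BHRV15} for the Gevrey smoothing in the inverse power law case, but replacing the Gevrey weight $\E^{\beta t \langle\eta\rangle^{2\nu}}$ by the logarithmic Fourier weight $G_t(\eta) \coloneq \E^{\beta t (\log\langle\eta\rangle)^{\mu+1}}$ dictated by the coercivity \eqref{eq:intuition2}. Since $G_t$ is unbounded, I would first introduce a regularisation $G_t^{\delta}(\eta)\coloneq \E^{\beta t (\log\langle\eta\rangle)^{\mu+1}/(1+\delta(\log\langle\eta\rangle)^{\mu+1})}$, which is bounded for each $\delta>0$, pointwise monotone increasing as $\delta\downarrow 0$ to $G_t$, and satisfies $\partial_t G_t^{\delta}\leq \beta (\log\langle\eta\rangle)^{\mu+1} G_t^{\delta}$. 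A bound on $\|G_t^{\delta}\hat f(t,\cdot)\|_{L^2}$ uniform in $\delta$ will then yield the theorem via monotone convergence.

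The core computation is the evolution of $M_\delta(t)\coloneq \tfrac{1}{2}\|G_t^{\delta}\hat f(t,\cdot)\|_{L^2}^2$. Using the Bobylev identity for Maxwellian molecules,
\begin{align*}
\widehat{Q(f,f)}(\eta)=\int_{\S^{d-1}} b(\hat\eta\cdot\sigma)\bigl[\hat f(\eta^+)\hat f(\eta^-)-\hat f(0)\hat f(\eta)\bigr]\,\D\sigma,\qquad \eta^{\pm}=\tfrac{\eta\pm|\eta|\sigma}{2},
\end{align*}
one obtains
\begin{align*}
\frac{\D M_\delta}{\D t}=\int_{\R^d}\partial_t G_t^{\delta}(\eta)^2\,|\hat f(\eta)|^2\,\D\eta+\int_{\R^d}\int_{\S^{d-1}} b(\hat\eta\cdot\sigma)\,G_t^{\delta}(\eta)^2\,\overline{\hat f(\eta)}\bigl[\hat f(\eta^+)\hat f(\eta^-)-\hat f(0)\hat f(\eta)\bigr]\,\D\sigma\D\eta.
\end{align*}
Writing $G_t^{\delta}(\eta)\hat f(\eta^{\pm})=G_t^{\delta}(\eta^{\pm})\hat f(\eta^{\pm})\cdot G_t^{\delta}(\eta)/G_t^{\delta}(\eta^{\pm})$ and using $|\hat f(\eta^{\pm})|\leq \|f_0\|_{L^1}$ together with $G_t^{\delta}(\eta)\leq G_t^{\delta}(\eta^+)G_t^{\delta}(\eta^-)$, which reduces to the subadditivity
\begin{align*}
(\log\langle\eta\rangle)^{\mu+1}\leq (\log\langle\eta^+\rangle)^{\mu+1}+(\log\langle\eta^-\rangle)^{\mu+1}+O(1),
\end{align*}
a consequence of $|\eta^+|^2+|\eta^-|^2=|\eta|^2$ and the subadditivity of $x\mapsto (\log(1+x))^{\mu+1}$ on bounded variations, allows us to control the bilinear term by $M_\delta(t)$ times an angular integral of Cancellation Lemma type; the angular integral is finite by \eqref{eq:momentumtransfer} after a Taylor expansion using the symmetry of $b$.

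The remaining genuinely singular piece is compensated by the coercivity developed in Appendix~\ref{app:coercivity}, which roughly reads $-\Re\langle Q(f,f),G_t^\delta(D_v)^2 f\rangle\gtrsim \int (\log\langle\eta\rangle)^{\mu+1}\,G_t^\delta(\eta)^2\,|\hat f(\eta)|^2\,\D\eta$, up to lower order terms controlled by $\|f\|_{L^1_2}+\|f\|_{L\log L}$, both of which are bounded by the entropy/mass/energy dissipation properties of weak solutions. Choosing $\beta>0$ sufficiently small (depending only on the coercivity constant and on $\|f_0\|_{L^1_2\cap L\log L}$), the contribution of $\partial_t G_t^{\delta}$ is absorbed into the coercivity term, yielding a Gronwall-type differential inequality of the form $\D M_\delta/\D t \leq C M_\delta+C'$ on $[0,T_0]$ with constants independent of $\delta$. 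Monotone convergence as $\delta\to 0$ then gives $G_t\hat f(t,\cdot)\in L^2$, and the $L^\infty$ bound on $G_t\hat f$ follows from the analogous scheme applied to $\sup_\eta G_t^{\delta}(\eta)|\hat f(\eta)|$, using that $\hat f$ is continuous and bounded by $\|f_0\|_{L^1}$ at $\eta=0$.

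The main obstacle I expect is the delicate interplay between the very weak logarithmic coercivity and the weight $G_t$: unlike in the Gevrey case, $(\log\langle\eta\rangle)^{\mu+1}$ grows so slowly that the absorption of the $\partial_tG_t$ term against the coercivity, and the careful splitting of the Bobylev bilinear term into a part controlled by submultiplicativity and a singular remainder treated via cancellation, both require sharper control of the ratios $G_t(\eta)/G_t(\eta^{\pm})$ on the region where $|\eta^+|\sim|\eta|$ (grazing $\sigma$), where the singular measure $b(\hat\eta\cdot\sigma)\,\D\sigma$ concentrates. Making this absorption quantitative, with constants explicit enough to close the Gronwall inequality uniformly in $\delta$, is the key technical step of the proof.
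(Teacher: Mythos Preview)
Your overall setup---Bobylev identity, coercivity from Appendix~\ref{app:coercivity}, absorption of $\partial_t G$ by choosing $\beta$ small---is on the right track, but the direct Gronwall argument you describe does not close. The difficulty is precisely in the step you treat most quickly: ``using $|\hat f(\eta^{\pm})|\leq \|f_0\|_{L^1}$ together with $G_t^{\delta}(\eta)\leq G_t^{\delta}(\eta^+)G_t^{\delta}(\eta^-)$ allows us to control the bilinear term by $M_\delta(t)$ times an angular integral''. Writing out what the commutator $G_\Lambda Q(f,f)-Q(f,G_\Lambda f)$ actually produces (see the paper's Proposition in Section~\ref{ssec:ce}), one finds a \emph{trilinear} integrand
\[
  b\Bigl(\tfrac{\eta}{|\eta|}\cdot\sigma\Bigr)\,\bigl|G(\eta)-G(\eta^+)\bigr|\,|\hat f(\eta^-)|\,|\hat f(\eta^+)|\,G(\eta)|\hat f(\eta)|.
\]
Subadditivity gives $G(\eta)\le G(\eta^+)G(\eta^-)$, hence after pairing $G(\eta^+)|\hat f(\eta^+)|$ and $G(\eta)|\hat f(\eta)|$ in $L^2$ you are still left with an unbounded factor $G(\eta^-)|\hat f(\eta^-)|$. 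Bounding $|\hat f(\eta^-)|$ by $\|f_0\|_{L^1}$ does \emph{not} dispose of $G(\eta^-)$; for the regularised weight $G^\delta$ this factor is bounded, but only by $\exp(\beta T_0/\delta)$, so your Gronwall constants blow up as $\delta\to 0$ and monotone convergence gives nothing. This is exactly the obstruction noted in the Remark following \eqref{eq:ce-estimate}: when the multiplier grows faster than polynomially the $\hat f(\eta^-)$ factor carries a genuinely growing weight that cannot be absorbed into $M_\delta$.

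The paper closes this gap by two devices that your proposal is missing. First, an \emph{enhanced} subadditivity (Lemma~\ref{lem:subadditivity}) which, by working with $\langle\eta\rangle_\alpha$ for large $\alpha$, yields $G(\eta)\le G(\eta^-)^{\varepsilon}G(\eta^+)$ with $\varepsilon=\tfrac{\mu+1}{1+\log\alpha}$ arbitrarily small; ordinary subadditivity ($\varepsilon=1$) is not enough. Second, instead of a single Gronwall on $[0,T_0]$, an \emph{inductive} scheme on dyadic Fourier shells $|\eta|\le\Lambda_N$: one assumes the $L^\infty$ bound $\sup_{|\xi|\le\Lambda}G(\xi)^{\varepsilon}|\hat f(\xi)|\le M$ (Hypothesis $\Hyp_\Lambda(M)$), uses it to control the dangerous $G(\eta^-)^{\varepsilon}|\hat f(\eta^-)|$ in the commutator, runs Gronwall on the cut-off quantity $\|G_{\sqrt2\Lambda}f\|_{L^2}$, and then upgrades the resulting $L^2$ bound back to the $L^\infty$ hypothesis on the larger shell via the elementary $L^2\!\to\!L^\infty$ extraction of Lemma~\ref{lem:impossible-embedding}. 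The parameter $\alpha$ is tuned so that the exponent $\varepsilon$ matches the $\tfrac{2}{d+2}$ produced by that extraction, and $M$ stays fixed throughout the induction. A secondary issue: your argument starts from $M_\delta(0)=\tfrac12\|f_0\|_{L^2}^2$, but $f_0$ is only assumed in $L^1_2\cap L\log L$; the paper first invokes the $H^\infty$ smoothing of \cite{MUXY09} to get $f(t_0,\cdot)\in L^2$ for some small $t_0>0$ and then runs the argument from there (Theorem~\ref{thm:mainL2}).
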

\begin{remark}
  This regularity is much weaker than the Gevrey regularity we proved in \cite{BHRV15} for singular kernels of the form \eqref{eq:powerlaw}, but it is much stronger than the $H^\infty$ smoothing shown in \cite{MUXY09}. Moreover, it is \emph{exactly} the right type of regularity one would expect for a coercive term of the form \eqref{eq:intuition2} from the analogy with the heat equation \eqref{eq:heat}.
\end{remark}

For our proof we have to choose $\beta$ small if $T_0$ is large and our bounds on $\beta$ deteriorate to zero in the limit $T_0\to\infty$, so our Theorem \ref{thm:main} does not give a uniform result for all $t>0$. Nevertheless,  by propagation results due to \textsc{Desvillettes, Furiolo} and \textsc{Terraneo} \cite{DFT09} we even have the uniform bound
\begin{corollary}\label{cor:main}
	Under the same assumptions as in Theorem \ref{thm:main}, for any weak solution $f$ of the Cauchy problem \eqref{eq:cauchyproblem} with 
	initial datum $f_0\ge 0$ and $f_0\in L^1_2(\R^d)\cap L\log L(\R^d)$, there exist constants $0<K,C<\infty$ such that
	\begin{align}\label{eq:Fourier-uniform}
		\sup_{0\le t<\infty}\sup_{\eta\in\R^d}
		\E^{K \min(t,1) \, \left(\log\langle \eta \rangle\right)^{\mu+1}} \,|\hat{f}(t,\eta)|
		\leq C .
	\end{align}
\end{corollary}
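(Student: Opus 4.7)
The plan is to treat the two regimes $0<t\le 1$ and $t\ge 1$ separately, using Theorem \ref{thm:main} for the first and a propagation-of-regularity result for the second, and then glue.

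\textbf{Regime $0<t\le 1$.} First apply Theorem \ref{thm:main} with $T_0=1$. This provides constants $\beta_0,M_0>0$ such that
\begin{align*}
\sup_{\eta\in\R^d}\E^{\beta_0 t (\log\langle\eta\rangle)^{\mu+1}}|\hat{f}(t,\eta)|\le M_0\qquad\text{for all }t\in(0,1].
\end{align*}
Since $\min(t,1)=t$ on this interval, this is already \eqref{eq:Fourier-uniform} on $(0,1]$ with $K=\beta_0$ and $C=M_0$. Specialising to $t=1$ also yields $f(1,\cdot)\in\mathcal{A}^{\mu}$, together with a quantitative Fourier bound $|\hat{f}(1,\eta)|\le M_0\,\E^{-\beta_0(\log\langle\eta\rangle)^{\mu+1}}$.

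\textbf{Regime $t\ge 1$.} Taking $f(1,\cdot)$ as the new initial datum, it remains to show that the Fourier bound just obtained is preserved, up to a multiplicative constant, for all later times. This is where the propagation results of Desvillettes, Furioli and Terraneo \cite{DFT09} enter. Using Bobylev's identity
\begin{align*}
\partial_t\hat{f}(\eta)=\int_{\S^{d-1}}b\Bigl(\frac{\eta}{|\eta|}\cdot\sigma\Bigr)\bigl[\hat{f}(\eta^+)\hat{f}(\eta^-)-\hat{f}(0)\hat{f}(\eta)\bigr]\,\D\sigma,\qquad \eta^{\pm}=\tfrac{1}{2}(\eta\pm|\eta|\sigma),
\end{align*}
one studies the evolution of $N(t)\coloneq\sup_{\eta}\E^{\beta_0(\log\langle\eta\rangle)^{\mu+1}}|\hat{f}(t,\eta)|$ by a Gr\"onwall-type argument. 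The identities $|\eta^+|=|\eta|\cos(\theta/2)$, $|\eta^-|=|\eta|\sin(\theta/2)$ together with the integrability condition \eqref{eq:momentumtransfer} yield, for every $\eta\in\R^d$, an estimate of the form
\begin{align*}
\int_{\S^{d-1}}b(\cos\theta)\bigl[(\log\langle\eta^+\rangle)^{\mu+1}+(\log\langle\eta^-\rangle)^{\mu+1}-(\log\langle\eta\rangle)^{\mu+1}\bigr]_+\,\D\sigma\le C_b,
\end{align*}
which in turn gives $\tfrac{\D}{\D t} N(t)\le C_b\,N(t)^2$ or, after subtracting the gain from the cancellation lemma, a linear estimate $\tfrac{\D}{\D t}N(t)\le C\,N(t)$. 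Either version furnishes a constant $M_1>0$, depending only on $N(1)$ and on the conserved moments of $f_0$, such that $N(t)\le M_1$ for all $t\ge 1$. On this interval $\min(t,1)=1$, so this is \eqref{eq:Fourier-uniform} with $K=\beta_0$ and $C=M_1$.

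\textbf{Conclusion and main obstacle.} Combining the two regimes, the corollary follows with $K=\beta_0$ and $C=\max(M_0,M_1)$. The main obstacle is the propagation step for $t\ge 1$: the results of \cite{DFT09} are formulated for genuine Gevrey weights $\E^{c|\eta|^{1/s}}$, whereas here the weight $\E^{\beta_0(\log\langle\eta\rangle)^{\mu+1}}$ is sub-polynomial in $\eta$. Because our weight is \emph{weaker} than Gevrey, the Bobylev-based argument adapts, but one still has to verify the key sub-additivity relation for $(\log\langle\cdot\rangle)^{\mu+1}$ under the collisional splitting $\eta\mapsto(\eta^+,\eta^-)$, with the integrable error being absorbed by \eqref{eq:momentumtransfer}. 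Grazing collisions ($\theta\to 0$) are harmless here, since in that regime $|\eta^-|\to 0$ and $(\log\langle\eta^-\rangle)^{\mu+1}\to 0$, so the concentration of $b$ near $\theta=0$ that drives the coercivity \eqref{eq:intuition2} actually \emph{helps} close the estimate.
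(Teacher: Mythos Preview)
Your two-regime split---smoothing on $(0,1]$ via Theorem \ref{thm:main}, propagation on $[1,\infty)$ via \cite{DFT09}---is exactly the paper's strategy. However, the propagation step as you sketch it has a genuine gap, and the ``main obstacle'' you flag is not in fact an obstacle.

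First, the results of \cite{DFT09} are \emph{not} restricted to Gevrey weights. Their Theorem 1.2 applies to any weight $\psi:[0,\infty)\to[0,\infty)$ satisfying (i) $\psi(r)\to\infty$, (ii) $\psi(r)\le r$ for large $r$, and (iii) $\psi(\lambda^2 r)\ge\lambda^2\psi(r)$ for $0\le\lambda\le1$ and $r$ large. The paper's proof simply verifies that $\psi_\alpha(r)=(\log\sqrt{\alpha+r})^{\mu+1}$ satisfies these three conditions; (iii) holds for any concave $\psi$ with $\psi(0)\ge0$, which is ensured by $\alpha\ge\E^\mu$ (Lemma \ref{lem:subadditivity}). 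No adaptation of the Bobylev argument is required---one quotes \cite{DFT09} directly.

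Second, and more seriously, the differential inequalities you propose do \emph{not} yield a uniform bound on $[1,\infty)$: $N'\le C_b N^2$ blows up in finite time, and $N'\le C N$ gives $N(t)\le N(1)\E^{C(t-1)}$, which is unbounded. So the sentence ``either version furnishes a constant $M_1>0$\ldots such that $N(t)\le M_1$ for all $t\ge1$'' is false as written. The propagation argument in \cite{DFT09} is not a naive Gr\"onwall: it uses the full structure of Bobylev's identity, in particular the damping from the loss term $-\hat f(0)\hat f(\eta)$ together with the super-additivity $\psi(|\eta^+|^2)+\psi(|\eta^-|^2)\ge\psi(|\eta|^2)$ (which is precisely condition (iii) applied with $\lambda=\cos\tfrac{\theta}{2}$ and $\lambda=\sin\tfrac{\theta}{2}$), in a comparison/continuity argument to show that the weighted supremum stays bounded, not merely that it grows at a controlled rate. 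Your integrability estimate on $\bigl[\psi(|\eta^+|^2)+\psi(|\eta^-|^2)-\psi(|\eta|^2)\bigr]_+$ is the right ingredient, but the conclusion you draw from it is too weak to close the argument.
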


The strategy of the proofs of our main result Theorem \ref{thm:main} is as follows:
We start with the additional assumption $f_0\in L^2$ on the initial datum (Theorem \ref{thm:mainL2}). We use the
known $H^\infty$ smoothing \cite{MUXY09} of the non-cutoff Boltzmann equation to allow for this.
Within an $L^2$ framework, a reformulation of the weak formulation of the Boltzmann equation is possible which includes suitable growing Fourier multipliers. As in \cite{MUXY09} the inclusion of Fourier multipliers leads to a nonlocal and nonlinear commutator with the Boltzmann kernel.
For non-power-type Fourier multipliers this commutator is considerably more complicated than the one encountered in the $H^{\infty}$ smoothing case. To overcome this, we follow the strategy we developed in \cite{BHRV15}, where an inductive procedure was invented to control the commutation error, in order to prove the Gevrey smoothing conjecture in the Maxwellian molecules case. 

The main differences compared with \cite{BHRV15} are:
\begin{enumerate}[label=(\arabic*)]
	\item For the weights needed  in the proof of Theorem \ref{thm:main} we have a much stronger enhanced subadditivity bound, see Lemma \ref{lem:subadditivity}. The proof is more involved than the one in \cite{BHRV15}, though.
	\item Because of the stronger form of the subadditivity bound, we can allow for a bigger loss in the induction step. We can therefore work with a more straightforward version of the `impossible' $L^2$-to-$L^{\infty}$ bound, see Lemma \ref{lem:impossible-embedding}.
	\item Due to the special form of the weights we use in this paper, which are in some sense in between the power type weights used in \cite{MUXY09} and the sub-gaussian weight used in \cite{BHRV15}, we don't have to do much of the additional songs and dances from \cite{BHRV15}. 
\end{enumerate}

\section{Enhanced subadditivity and properties of the Fourier weights}

\begin{lemma}\label{lem:subadditivity}
	Let $\mu>0$ and $h: [0, \infty) \to [0, \infty)$, $s \mapsto h(s) = \left(\log(\alpha+s)\right)^{\mu+1}$ for some $\alpha \geq e^{\mu}$. Then $h$ is increasing, concave and for any $0\leq s_- \leq s_+$,
	\begin{align}\label{eq:subadditivity-weight}
		h(s_- + s_+) \leq \frac{\mu+1}{1+\log \alpha} h(s_-) + h(s_+).
	\end{align}
\end{lemma}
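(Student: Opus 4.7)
The plan is to first establish monotonicity and concavity by direct differentiation, and then to exploit concavity together with the hypothesis $s_- \leq s_+$ to reduce the enhanced subadditivity estimate to a single scalar inequality in one variable, which I verify by elementary calculus.

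Monotonicity and concavity are routine. A direct computation gives
\[
h'(s) = \frac{(\mu+1)(\log(\alpha+s))^{\mu}}{\alpha+s}, \qquad h''(s) = \frac{(\mu+1)(\log(\alpha+s))^{\mu-1}\bigl(\mu - \log(\alpha+s)\bigr)}{(\alpha+s)^2}.
\]
Since $\alpha \geq \E^{\mu}$, we have $\log(\alpha+s) \geq \log \alpha \geq \mu > 0$ for every $s \geq 0$, so $h'>0$ and $h''\leq 0$ on $[0,\infty)$. Consequently $h$ is increasing and concave, and in particular $h'$ is decreasing. By concavity,
\[
h(s_-+s_+) - h(s_+) \;\leq\; s_-\, h'(s_+),
\]
and since $h'$ is decreasing and $s_+ \geq s_-$, I may replace $h'(s_+)$ by the (larger) $h'(s_-)$:
\[
h(s_-+s_+) - h(s_+) \;\leq\; s_-\, h'(s_-) \;=\; \frac{(\mu+1)\, s_-\, (\log(\alpha+s_-))^{\mu}}{\alpha+s_-}.
\]
The desired bound $\frac{\mu+1}{1+\log\alpha} h(s_-) = \frac{\mu+1}{1+\log\alpha}(\log(\alpha+s_-))^{\mu+1}$ therefore reduces, after dividing by $(\mu+1)(\log(\alpha+s_-))^{\mu}>0$, to the scalar inequality
\[
\frac{s_-}{\alpha+s_-} \;\leq\; \frac{\log(\alpha+s_-)}{1+\log\alpha} \qquad \text{for all } s_- \geq 0.
\]

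The main obstacle is verifying this elementary but not-quite-obvious inequality. Setting $x = \alpha + s_- \in [\alpha,\infty)$ and $g(x) \coloneq \frac{\log x}{1+\log\alpha} + \frac{\alpha}{x} - 1$, the task becomes showing $g\geq 0$ on $[\alpha,\infty)$. Since $g'(x) = \frac{1}{x(1+\log\alpha)} - \frac{\alpha}{x^{2}}$, the unique critical point is $x_{*} = \alpha(1+\log\alpha)$, which lies in $[\alpha,\infty)$ and is a minimum of $g$. A short computation gives
\[
g(x_{*}) \;=\; \frac{\log\alpha + \log(1+\log\alpha) + 1 - (1+\log\alpha)}{1+\log\alpha} \;=\; \frac{\log(1+\log\alpha)}{1+\log\alpha} \;\geq\; 0,
\]
the last inequality holding because $\alpha \geq \E^{\mu} > 1$ forces $1+\log\alpha > 1$, hence $\log(1+\log\alpha) \geq 0$. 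This establishes the scalar inequality and completes the proof.

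Conceptually, the naive concave subadditivity $h(a+b) \leq h(a) + h(b) - h(0)$ is far too weak, as it retains the full coefficient $1$ in front of $h(s_-)$ on the right-hand side. The decisive gain comes from using the hypothesis $s_- \leq s_+$ to evaluate $h'$ at $s_-$ rather than at $0$: this produces the factor $(\alpha+s_-)^{-1}$, which is precisely what generates the logarithmic denominator $1+\log\alpha$ in the enhanced bound \eqref{eq:subadditivity-weight}.
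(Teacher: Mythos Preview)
Your proof is correct and follows essentially the same strategy as the paper: both bound $h(s_-+s_+)-h(s_+)$ by $s_-\,h'(s_-)$ using concavity together with $s_-\le s_+$, and both then reduce to the same scalar inequality $\frac{s_-}{(\alpha+s_-)\log(\alpha+s_-)}\le \frac{1}{1+\log\alpha}$. The only differences are cosmetic: you reach $s_-\,h'(s_-)$ via the tangent-line inequality $h(s_-+s_+)-h(s_+)\le s_-\,h'(s_+)\le s_-\,h'(s_-)$, whereas the paper detours through $h(2s_-)-h(s_-)$; and you verify the scalar inequality by locating the minimum of $g(x)=\frac{\log x}{1+\log\alpha}+\frac{\alpha}{x}-1$, whereas the paper uses convexity of $F_\alpha(s)=(\alpha+s)\log(\alpha+s)$ to get $F_\alpha(s)\ge(1+\log\alpha)s$ directly.
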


\begin{remark}
For $\alpha \geq \E^{\mu}$, one has $h(0) = \mu^{\mu+1} >0$, and from the concavity of $h$ one concludes the subadditivity estimate 
\begin{align*}
	h(s_-) + h(s_+) \geq h(s_- + s_+) + h(0) > h(s_- + s_+)
\end{align*}
for all $s_-, s_+ \geq 0$. Note that this is the best possible bound for general $s_-, s_+ \geq 0$.
For $0\leq s_- \leq s_+$ Lemma \ref{lem:subadditivity} shows that the subadditivity bound can be improved to gain the small factor $\frac{\mu+1}{1+\log\alpha}$, which is strictly less than one for $\alpha > \E^{\mu}$, in front of $h(s_-)$. So this is indeed an \emph{enhanced} subadditivity property of the function $h$.  

Lemma \ref{lem:subadditivity} plays a similar role in the proof of Theorem \ref{thm:main}, as Lemma 2.6 in our previous paper \cite{BHRV15}. Here the situation is a bit simpler than in \cite{BHRV15}, since by choosing $\alpha$ large enough, we can make the term $\frac{\mu+1}{1+\log \alpha}$ as small as we like.
\end{remark}

\begin{proof}
	Since
	\begin{align*}
		h'(s) &= \frac{\mu+1}{\alpha+s} \left(\log(\alpha+s)\right)^{\mu} \geq 0 \quad \text{if } \alpha\geq 1,
	\end{align*}
	the function $h$ is increasing. Further, 
	\begin{align*}
		h''(s) &= \frac{\mu+1}{(\alpha+s)^2} \left(\log(\alpha+s)\right)^{\mu-1} \left(\mu - \log(\alpha+s)\right) \leq \frac{\mu+1}{(\alpha+s)^2} \left(\log(\alpha+s)\right)^{\mu-1} \left(\mu - \log(\alpha)\right) \leq 0
	\end{align*}
	for $\alpha\geq \E^{\mu}$, so $h$ is concave. 
	
	For all $s_-, s_+ \geq 0$,
	\begin{align*}
		h(s_- + s_+) = h(s_-) \frac{h(s_- + s_+) - h(s_+)}{h(s_-)} + h(s_+),
	\end{align*}
	and by concavity, $s_+ \mapsto h(s_- + s_+) - h(s_+)$ is decreasing, so using $0 \leq s_- \leq s_+$ one has 
	\begin{align*}
		h(s_- + s_+) \leq h(s_-) \frac{h(2s_-) - h(s_-)}{h(s_-)} + h(s_+).
	\end{align*}
	Since $h'$ is decreasing,
	\begin{align*}
		h(2s_-) - h(s_-) = \int_{s_-}^{2s_-} h'(r)\,\D{r} \leq h'(s_-)s_-
	\end{align*}
	and we get
	\begin{align*}
		h(s_- + s_+) \leq h(s_-) \frac{h'(s_-) s_-}{h(s_-)} + h(s_+) = h(s_-) \frac{(\mu+1) s_-}{(\alpha+s_-) \log(\alpha+s_-)} + h(s_+).
	\end{align*}
	For $\alpha\geq 1$ the function $F_{\alpha}: [0, \infty) \to \R$, $F_{\alpha}(s)\coloneq  (\alpha+s) \log(\alpha+s)$, is strictly convex and thus 
	\begin{align*}
		F_{\alpha}(s) \geq F_{\alpha}(0) + F_{\alpha}'(0) s = \alpha \log\alpha + (1+\log\alpha)s \geq (1+\log\alpha) s.
	\end{align*}
	It follows that $\frac{s_-}{(\alpha+s_-)\log(\alpha+s_-)} \leq \frac{1}{1+\log\alpha}$ and therefore
	\begin{align*}
		h(s_- + s_+) \leq h(s_-) \frac{\mu+1}{1+\log\alpha} + h(s_+).
	\end{align*}
\end{proof}

\begin{proposition} \label{prop:ce-bound}
Let $\beta, t, \mu > 0$, $\alpha \geq \E^{\mu}$ and define the function $\widetilde{G}: [0, \infty) \to \R$ by 
	\begin{align*}
		\widetilde{G}(r)\coloneq  \E^{\beta t 2^{-\mu-1} \left(\log(\alpha+r)\right)^{\mu+1}}.
	\end{align*}
Then for all $0\leq s_- \leq s_+$ with $s_-+s_+ = s$ one has
\begin{align*}
	\left| \widetilde{G}(s) - \widetilde{G}(s_+)\right| \leq 2^{-\mu} \beta t (\mu+1) \left(1-\frac{s_+}{s} \right) \left( \log(\alpha+s) \right)^{\mu} \widetilde{G}(s_-)^{\frac{\mu+1}{1+\log\alpha}} \widetilde{G}(s_+).
\end{align*}
\end{proposition}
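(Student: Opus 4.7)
The plan is to reduce the bound to three essentially independent estimates by linearising the exponential and then invoking the subadditivity lemma just proved.

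First I would set $\phi(r) \coloneq \beta t\, 2^{-\mu-1}(\log(\alpha+r))^{\mu+1}$ so that $\widetilde{G}(r)=\E^{\phi(r)}$ and $h$ from Lemma~\ref{lem:subadditivity} satisfies $\phi = \beta t\, 2^{-\mu-1} h$. Since $s=s_-+s_+\geq s_+$ and $\widetilde{G}$ is increasing (being $\E$ composed with an increasing function), the absolute value on the left is redundant: $\widetilde{G}(s)-\widetilde{G}(s_+)\geq 0$. The standard convexity inequality $\E^{x}-\E^{y}\leq (x-y)\E^{x}$ for $x\geq y$ (immediate from $\E^{x}-\E^{y}=\int_{y}^{x}\E^{r}\,\D r\leq \E^{x}(x-y)$) then gives
\begin{align*}
\widetilde{G}(s)-\widetilde{G}(s_+) \leq \bigl(\phi(s)-\phi(s_+)\bigr)\,\widetilde{G}(s).
\end{align*}

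Next I would use the enhanced subadditivity of $h$ from Lemma~\ref{lem:subadditivity} to split $\widetilde{G}(s)$. Multiplying the inequality $h(s_-+s_+)\leq \tfrac{\mu+1}{1+\log\alpha} h(s_-)+h(s_+)$ by $\beta t\,2^{-\mu-1}$ and exponentiating yields
\begin{align*}
\widetilde{G}(s)\leq \widetilde{G}(s_-)^{\frac{\mu+1}{1+\log\alpha}}\,\widetilde{G}(s_+),
\end{align*}
which produces precisely the two exponential factors appearing on the right-hand side of the target inequality.

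It remains to control the prefactor $\phi(s)-\phi(s_+)=\beta t\,2^{-\mu-1}(h(s)-h(s_+))$. By concavity of $h$ (also from Lemma~\ref{lem:subadditivity}),
\begin{align*}
h(s)-h(s_+)\leq h'(s_+)(s-s_+) = (\mu+1)\,\frac{(\log(\alpha+s_+))^{\mu}}{\alpha+s_+}\,s_-.
\end{align*}
The key observation — and the only place any thought is required — is that the ordering hypothesis $s_-\leq s_+$ combined with $s_-+s_+=s$ forces $s_+\geq s/2$, hence $\alpha+s_+\geq s/2$. Using also $\log(\alpha+s_+)\leq \log(\alpha+s)$, this upgrades to
\begin{align*}
h(s)-h(s_+)\leq 2(\mu+1)\,\frac{s_-}{s}\,(\log(\alpha+s))^{\mu},
\end{align*}
so that $\phi(s)-\phi(s_+)\leq 2^{-\mu}\beta t(\mu+1)\bigl(1-\tfrac{s_+}{s}\bigr)(\log(\alpha+s))^{\mu}$ after recognising $s_-/s=1-s_+/s$. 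Chaining the three estimates yields the claimed inequality.

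There is no real obstacle here; the proposition is a packaged consequence of the two tools from Lemma~\ref{lem:subadditivity} (subadditivity and concavity) together with the tangent-line bound for $\E^{x}$, and the only mildly delicate input is the factor-of-two coming from $s_+\geq s/2$.
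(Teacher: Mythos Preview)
Your argument is correct and follows essentially the same route as the paper. The only cosmetic difference is packaging: the paper integrates $\widetilde{G}'$ directly and bounds the integrand term-by-term, whereas you first apply the tangent-line inequality $\E^{x}-\E^{y}\leq (x-y)\E^{x}$ and then bound $\phi(s)-\phi(s_+)$ via concavity of $h$; both produce the same intermediate estimate and use the same $s_+\geq s/2$ observation and the same subadditivity splitting of $\widetilde{G}(s)$.
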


\begin{proof}
	Using
	\begin{align*}
		\widetilde{G}'(s) = 2^{-\mu-1} \beta t (\mu+1) \frac{1}{\alpha+s} \left( \log(\alpha+s) \right)^{\mu} \widetilde{G}(s)
	\end{align*}
	one has
	\begin{align*}
		\widetilde{G}(s) - \widetilde{G}(s_+) = \int_{s_+}^s \widetilde{G}'(r)\,\D{r} \leq 2^{-\mu-1} \beta t (\mu+1) \frac{s-s_+}{\alpha+s_+} \left( \log(\alpha+s) \right)^{\mu} \widetilde{G}(s),
	\end{align*}
	where we used that $s_+\leq s$ and the fact that $\widetilde{G}$ is increasing. Since $s_-+s_+ = s$ and $0\leq s_- \leq s_+$, in particular $s_+\geq \frac{s}{2}$, we can further estimate
	\begin{align*}
		\frac{s-s_+}{\alpha+s_+} = \left(1- \frac{s_+}{s} \right) \frac{s}{\alpha+s_+} \leq \left(1- \frac{s_+}{s} \right) \frac{2s_+}{\alpha+s_+} \leq 2\left(1- \frac{s_+}{s} \right),
	\end{align*}
	to obtain 
	\begin{align*}
		\widetilde{G}(s) - \widetilde{G}(s_+) \leq 2^{-\mu} \beta t (\mu+1) \left( 1 - \frac{s_+}{s} \right) \left( \log(\alpha+s) \right)^{\mu} \widetilde{G}(s).
	\end{align*}
	The rest now follows from the enhanced subadditivity property \eqref{eq:subadditivity-weight}, namely
	\begin{align*}
		\widetilde{G}(s) = \widetilde{G}(s_- + s_+) \leq \widetilde{G}(s_-)^{\frac{\mu+1}{1+\log\alpha}} \widetilde{G}(s_+). 
	\end{align*}
\end{proof}

\section{Extracting $L^{\infty}$ bounds from $L^2$: a simple proof}
Following is a simple bound which controls the size of  a function $h$ in terms of its local $L^2$ norm and some global a priori bounds on $h$ and its derivative. 
\begin{lemma} \label{lem:impossible-embedding}
	Let $h\in \mathcal{C}^1_b(\R^d)$, i.e. $h$ is a bounded continuously differentiable function with bounded derivative. Then there exists a constant $L<\infty$ (depending only on $d, \|h\|_{L^{\infty}(\R^d)}$ and, $\|\nabla h\|_{L^{\infty}(\R^d)}$\,) such that for any $x\in\R^d$,
	\begin{align}\label{eq:impossible-embedding}
		|h(x)| \leq L \left( \int_{Q_x} |h(y)|^2 \,\mathrm{d}y \right)^{\frac{1}{d+2}},
	\end{align}
	where $Q_x$ is a unit cube in $\R^d$ with $x$ being one of the corners, oriented away from the origin in the sense that $x\cdot (y-x) \geq 0$ for all $y\in Q_x$. 
\end{lemma}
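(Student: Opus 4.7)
The strategy is to use the Lipschitz control provided by $\|\nabla h\|_{L^{\infty}}$ to propagate the pointwise value $|h(x)|$ to a neighbourhood of $x$, and then to exploit the fact that $Q_x$ is a unit cube having $x$ as a corner to convert the resulting pointwise bound into a lower bound on $\|h\|_{L^2(Q_x)}$. Two regimes will have to be handled separately, according to whether $|h(x)|$ is small or large.

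Concretely, set $M \coloneq |h(x)|$. By the fundamental theorem of calculus,
\begin{align*}
|h(y)| \geq M - \|\nabla h\|_{L^{\infty}(\R^d)} |y-x|,
\end{align*}
so $|h(y)| \geq M/2$ on the ball $B(x, r_0)$ with $r_0 \coloneq M/(2\|\nabla h\|_{L^{\infty}(\R^d)})$. Next, since $x$ is a corner of $Q_x$ and $Q_x$ is oriented away from the origin, we may take $Q_x$ to be axis-aligned with respect to a frame at $x$ pointing into the cube; then for any $r \leq 1$ the intersection $Q_x \cap B(x, r)$ contains a full orthant of $B(x,r)$, of Lebesgue measure $2^{-d}\omega_d r^d$, where $\omega_d = |B(0,1)|$. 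Taking $r \coloneq \min(1, r_0)$ one therefore obtains
\begin{align*}
\int_{Q_x} |h(y)|^2 \, \mathrm{d}y \geq \left(\tfrac{M}{2}\right)^2 \cdot 2^{-d}\omega_d \, r^d.
\end{align*}

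Finally, I would split into two cases. If $r_0 \leq 1$, then $r = r_0$ and rearranging gives $M^{d+2} \leq C_1 \int_{Q_x} |h|^2 \, \mathrm{d}y$ with a constant $C_1$ depending only on $d$ and $\|\nabla h\|_{L^{\infty}}$, which after taking the $(d+2)$-th root is exactly \eqref{eq:impossible-embedding}. If instead $r_0 > 1$, then $r = 1$ and the estimate reads $M^2 \leq C_2(d) \int_{Q_x} |h|^2\,\mathrm{d}y$; here I would factor $M = M^{2/(d+2)} \cdot M^{d/(d+2)}$ and bound $M^{d/(d+2)} \leq \|h\|_{L^{\infty}}^{d/(d+2)}$, which again yields an estimate of the form $M \leq L \bigl(\int_{Q_x}|h|^2\,\mathrm{d}y\bigr)^{1/(d+2)}$, now with a constant $L$ depending also on $\|h\|_{L^{\infty}}$. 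Taking the maximum of the two constants produces the claimed $L$.

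The argument is essentially a standard Lipschitz interpolation and presents no serious obstacle. The only points that require a tiny verification are the orthant volume computation for $Q_x \cap B(x,r)$ (immediate from the corner placement of $x$ together with the cube being axis-aligned in the outward frame), and the conversion of the $L^2$ exponent $1/2$ to $1/(d+2)$ in the second case via the a priori bound $\|h\|_{L^{\infty}}$.
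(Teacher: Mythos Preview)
Your proof is correct and essentially complete; the only trivial edge case worth a word is $\|\nabla h\|_{L^\infty}=0$, where $h$ is constant and you are automatically in your second regime (think of $r_0=\infty$).

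Your route is, however, genuinely different from the paper's. The paper proceeds by a one-dimensional reduction: for $u\in\mathcal{C}^1_b(\R)$ and any $q\ge 1$ it shows
\[
|u(r)|^q \le \max\{\,q\|u'\|_{L^\infty},\ \|u\|_{L^\infty}\,\}\int_{I_r}|u(s)|^{q-1}\,\D s,
\]
by writing $|u(r)|^q - \int_{I_r}|u|^q \le \int_{I_r}|u^q(r)-u^q(s)|\,\D s$ and bounding the integrand via the fundamental theorem of calculus. It then iterates this inequality $d$ times, once in each coordinate direction, starting from $q=d+2$ and lowering the exponent by one at each step, so that after $d$ iterations the remaining integrand is $|h|^2$ over $I_{x_1}\times\cdots\times I_{x_d}=Q_x$. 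This produces the explicit constant $L=\big(\max\{(d+2)\|\nabla h\|_{L^\infty},\,\|h\|_{L^\infty}\}\big)^{d/(d+2)}$. Your argument is instead a direct Lipschitz interpolation: propagate $|h(x)|$ to a small ball and intersect with the corner orthant of $Q_x$. Your approach is more geometric and arguably quicker for this particular statement; the paper's iterative scheme has the advantage that, run with higher-order Taylor remainders instead of the first-order one, it immediately yields the sharper exponent $\tfrac{1}{d+2m}$ under $\mathcal{C}^m_b$ control (a point the paper explicitly remarks on), whereas the ball-propagation argument does not generalise to higher derivatives quite as transparently.
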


\begin{remarks}
\begin{SE}
	\item We use the norm $\|\nabla h\|_{L^{\infty}(\R^d)} = \sup_{\eta\in\R^d} |\nabla h(\eta)|$, where $|\cdot|$ is the Euclidean norm on $\R^d$.
	\item The exponent $\frac{1}{d+2}$ can be improved if higher derivatives of the function $h$ are bounded, see Section 2.3 in \cite{BHRV15}. This was important for the results of \cite{BHRV15}, but we don't need it here because of the stronger form of the enhanced subadditivity Lemma for the weight we consider in this paper. 
\end{SE}
\end{remarks}

\begin{remark} \label{rem:impossible-embedding}
	If $f\in L^1_1(\R^d)$, its Fourier transform satisfies $\hat{f}\in\mathcal{C}^1_b(\R^d)$ by the Riemann-Lebesgue lemma. Since $\nabla_\eta \hat{f}(\eta) =  \widehat{2\pi\I vf}(\eta)$ one has the a priori bound $\|\nabla \hat{f}\|_{L^\infty(\R^d)} \leq 2\pi \|f\|_{L^1_1(\R^d)}$. 
	
	If $f$ is a weak solution of the homogeneous Boltzmann equation, we can also bound $\|\nabla \hat{f}\|_{L^{\infty}(\R^d)} \leq  2\pi \|f_0\|_{L^1_2(\R^d)}$ uniformly in time due to conservation of energy. 
	
\end{remark}

\begin{proof}
	We first consider the one-dimensional case and prove the $d$-dimensional result by iteration in each coordinate direction. 
	
	Let $u\in\mathcal{C}^1_b(\R)$ and $q\geq 1$. Then for any $r\in\R$ we have 
	\begin{align}\label{eq:impossible-1d}
		|u(r)|^q \leq \max\left\{ q \|u'\|_{L^{\infty}(\R)}, \|u\|_{L^{\infty}(\R)}\right\} \int_{I_r} |u(s)|^{q-1}\,\D{s},
	\end{align} 
	where $I_r = [r, r+1]$ if $r\geq0$ and $I_r = [r-1,r]$ if $r<0$. 
	
	Indeed, assuming for the moment $r\geq0$, 
	\begin{align*}
		|u(r)|^q - \int_{I_r} |u(s)|^q\,\D{s} \leq \int_{I_r} |u^q(r) - u^q(s)|\,\D{s},
	\end{align*}
	and by the fundamental theorem of calculus,
	\begin{align*}
		|u^q(r) - u^q(s)| \leq q \int_{I_r} |u(t)|^{q-1} |u'(t)|\,\D{t} \leq q \|u'\|_{L^{\infty}(\R)} \int_{I_r} |u(t)|^{q-1}\,\D{t}.
	\end{align*}
	Combined with the trivial estimate $\int_{I_r} |u(s)|^{q}\,\D{s} \leq \|u\|_{L^{\infty}(\R)} \int_{I_r} |u(s)|^{q-1}\,\D{s}$ one arrives at inequality \eqref{eq:impossible-1d} for $r\geq 0$. The case $r<0$ is analogous. 
	
	For the case $d>1$ we remark that for any $y\in\R^d$, 
	\begin{align*}
		\|h(y_1, \dots, y_{j-1}, \,\cdot\,, y_{j+1}, \dots, y_d)\|_{L^{\infty}(\R)} &\leq \|h\|_{L^{\infty}(\R^d)}, \quad \text{and} \\
		\|\partial_j h(y_1, \dots, y_{j-1}, \,\cdot\,, y_{j+1}, \dots, y_d)\|_{L^{\infty}(\R)} &\leq \|\nabla h\|_{L^{\infty}(\R^d)}
	\end{align*}
	and setting $q=d+2$ iterative application of \eqref{eq:impossible-1d} in each coordinate direction yields for $x\in\R^d$
	\begin{align*}
		|h(x)|^{d+2} \leq \left(\max\left\{ (d+2) \|\nabla h\|_{L^{\infty}(\R^d)}, \|h\|_{L^{\infty}(\R^d)} \right\} \right)^d \int_{I_{x_1}\times \cdots \times I_{x_d}} |h(y)|^{d+2-d}\,\D{y},
	\end{align*}
	hence
	\begin{align*}
		|h(x)| \leq \left(\max\left\{ (d+2) \|\nabla h\|_{L^{\infty}(\R^d)}, \|h\|_{L^{\infty}(\R^d)} \right\} \right)^{\frac{d}{d+2}} \left(\int_{Q_x} |h(y)|^2 \,\D{y}\right)^{\frac{1}{d+2}} =: L \|h\|_{L^2(Q_x)}^{\frac{2}{d+2}}
	\end{align*}
	where $Q_x = I_{x_1} \times \cdots \times I_{x_d}$ is a unit cube directed away from the origin with $x\in\R^d$ at one of its corners. 
\end{proof}

\section{Smoothing property of the Boltzmann operator}
A central step in the proof of Theorem \ref{thm:main} is to prove a version for $L^2$ initial data first. This is the content of Theorem \ref{thm:mainL2} below. In the remainder of this article we will always assume that the collision kernel satisfies assumptions \eqref{eq:singularity} and \eqref{eq:momentumtransfer}.

\begin{theorem}\label{thm:mainL2}
	Let $f$ be a weak solution of the Cauchy problem \eqref{eq:cauchyproblem} with initial datum $f_0\geq 0$, $f_0\in L^1_2(\R^d) \cap L\log L(\R^d)$ and in addition $f_0\in L^2(\R^d)$. 
	
	Then for all $T_0>0$ there exist $\beta,M>0$ such that for all $t\in[0,T_0]$
	\begin{align*}
		\sup_{\eta\in\R^d} \E^{\beta t \left(\log\langle \eta \rangle_{\alpha}\right)^{\mu+1}} |\hat{f}(t,\eta)| \leq M
	\end{align*}
	and 
	\begin{align*}
		\E^{\beta t \left(\log\langle D_v \rangle_{\alpha}\right)^{\mu+1}} f(t,\cdot) \in L^2(\R^d)
	\end{align*}
	where $\alpha = \E^{\frac{d}{2} + \frac{d+2}{2}\mu}$.
\end{theorem}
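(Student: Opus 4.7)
The proof adapts the Fourier-side inductive scheme of \cite{BHRV15} to the logarithmic weight of the present setting. The plan is to work with Bobylev's identity,
\begin{align*}
\widehat{Q(f,f)}(\eta) = \int_{\S^{d-1}} b\bigl(\tfrac{\eta}{|\eta|}\cdot\sigma\bigr)\bigl[\hat{f}(\eta^+)\hat{f}(\eta^-) - \hat{f}(0)\hat{f}(\eta)\bigr]\,\D\sigma,\qquad \eta^\pm = \tfrac{\eta\pm|\eta|\sigma}{2},
\end{align*}
and to derive a priori bounds for $G(t,\eta)\hat{f}(t,\eta)$ with $G(t,\eta)\coloneq \E^{\beta t(\log\langle\eta\rangle_\alpha)^{\mu+1}}$. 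The particular choice $\alpha = \E^{d/2+(d+2)\mu/2}$ is forced by the requirement that the enhanced-subadditivity exponent in Lemma \ref{lem:subadditivity} satisfy
\begin{align*}
\gamma \coloneq \frac{\mu+1}{1+\log\alpha} = \frac{2}{d+2},
\end{align*}
\emph{exactly} matching the $L^2\to L^\infty$ loss exponent in Lemma \ref{lem:impossible-embedding}. This matching is the algebraic cornerstone of the whole argument.

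To make the time differentiation rigorous, I would first introduce a truncated weight $G_N\coloneq G\,\1_{|\eta|\leq N}$, so that $\|G_N\hat{f}\|_{L^2}$ and $\|G_N\hat{f}\|_{L^\infty}$ are finite a priori (using $f_0\in L^2$ together with Remark \ref{rem:impossible-embedding}), and then remove the truncation at the end by monotone convergence. Differentiating $\|G_N\hat{f}\|_{L^2}^2$ and inserting the Debye--Yukawa coercivity estimate of Appendix \ref{app:coercivity} yields a differential inequality of the schematic form
\begin{align*}
\tfrac12\partial_t\|G_N\hat{f}\|_{L^2}^2 + (c_0-\beta)\bigl\|(\log\langle\cdot\rangle_\alpha)^{(\mu+1)/2} G_N\hat{f}\bigr\|_{L^2}^2 \leq \mathcal{E}_N(t),
\end{align*}
where $\mathcal{E}_N$ collects the commutator errors coming from the lack of symmetry of $G_N(\eta)^2$ under the collision variables. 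For $\beta<c_0$ the coercive gap is positive, so the task reduces to bounding $\mathcal{E}_N$ in terms of $\|G_N\hat{f}\|_{L^2}^2$ and a controllable fraction of the coercive term.

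The key step is the commutator estimate. Exploiting the spherical symmetry $\sigma\mapsto-\sigma$ to restrict to $|\eta^+|\geq|\eta^-|$, split
\begin{align*}
G_N(\eta) = G_N(\eta^+) + \bigl[G_N(\eta)-G_N(\eta^+)\bigr].
\end{align*}
The first piece produces a diagonal contribution that is controlled by $\|\hat f\|_{L^\infty}\leq\|f_0\|_{L^1}$ and $\|G_N\hat{f}\|_{L^2}^2$. For the genuine commutator, Proposition \ref{prop:ce-bound} with $s=|\eta|^2$, $s_\pm=|\eta^\pm|^2$ gives
\begin{align*}
\bigl|G_N(\eta)-G_N(\eta^+)\bigr| \lesssim \beta t\,\sin^2(\theta/2)\,(\log\langle\eta\rangle_\alpha)^{\mu}\,G_N(\eta^-)^{\gamma} G_N(\eta^+).
\end{align*}
The factor $\sin^2(\theta/2)$ compensates the angular singularity of $b$ via \eqref{eq:momentumtransfer}; the extra $(\log\langle\eta\rangle_\alpha)^{\mu}$ is absorbed into a small fraction of the coercive term at the cost of a multiple of $\beta$; and the remaining $G_N(\eta^-)^{\gamma}$ is handled by applying Lemma \ref{lem:impossible-embedding} to $h=G_N\hat f$: since $\gamma=2/(d+2)$,
\begin{align*}
G_N(\eta^-)^{\gamma}|\hat f(\eta^-)|^{\gamma} = \bigl((G_N|\hat f|)(\eta^-)\bigr)^{\gamma} \leq L^{\gamma} \|G_N\hat f\|_{L^2(Q_{\eta^-})}^{2\gamma/(d+2)},
\end{align*}
with $L$ depending only on the time-uniform a priori bounds on $\hat f$ and $\nabla\hat f$. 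The remaining factor $|\hat f(\eta^-)|^{1-\gamma}$ is majorised by $\|f_0\|_{L^1}^{1-\gamma}$. A Carleman-type change of variables then reorganises the result into a functional of $\|G_N\hat f\|_{L^2}$.

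The principal obstacle is the apparent circularity between the $L^2$ estimate on $G_N\hat f$ and the $L^\infty$ control needed inside $\mathcal{E}_N$: pointwise bounds come, via Lemma \ref{lem:impossible-embedding}, from the very $L^2$ estimate one is trying to close. This is exactly what the \emph{enhanced} subadditivity resolves, since the weight enters $\mathcal{E}_N$ only to the reduced power $\gamma<1$, so its contribution can be absorbed back into $\|G_N\hat f\|_{L^2}^2$ with room to spare. Combining everything into a Gronwall argument, uniform in $N$ and $t\in[0,T_0]$, for $\beta$ sufficiently small (depending on $T_0,d,\mu$ and the conserved quantities) closes the bootstrap. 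Letting $N\to\infty$ by monotone convergence yields the $L^2$ bound, and the companion $L^\infty$ bound $\sup_\eta G(t,\eta)|\hat f(t,\eta)|\leq M$ is obtained by one final application of Lemma \ref{lem:impossible-embedding}.
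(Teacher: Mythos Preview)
Your proposal assembles the right ingredients --- Bobylev's identity, the enhanced subadditivity via Proposition~\ref{prop:ce-bound}, the coercivity, and the $L^2\to L^\infty$ bound of Lemma~\ref{lem:impossible-embedding} --- but the way you close the loop contains a genuine gap. You apply Lemma~\ref{lem:impossible-embedding} to $h=G_N\hat f$ and claim that the resulting constant $L$ ``depends only on the time-uniform a priori bounds on $\hat f$ and $\nabla\hat f$''. It does not: by the statement of that lemma, $L$ depends on $\|G_N\hat f\|_{L^\infty}$ and $\|\nabla(G_N\hat f)\|_{L^\infty}$, and these quantities are \emph{not} a priori controlled uniformly in $N$ --- indeed, bounding $\|G\hat f\|_{L^\infty}$ is one of the two conclusions of the theorem. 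So the circularity you flag is still present; the reduced power $\gamma<1$ on the weight does not by itself dissolve it.

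The paper breaks the circularity differently, in two respects. First, Lemma~\ref{lem:impossible-embedding} is always applied to $h=\hat f$ (so that $L$ depends only on $\|f_0\|_{L^1_2}$ via Remark~\ref{rem:impossible-embedding}), and the weight is handled separately using that $G$ is radial and increasing together with the orientation of the cube $Q_\eta$ away from the origin: for $\zeta\in Q_\eta$ one has $G(\eta)\le G(\zeta)$, so $G(\eta)^{2/(d+2)}$ can be pulled inside $\bigl(\int_{Q_\eta}|\hat f|^2\bigr)^{1/(d+2)}$ at no cost. Second, and more structurally, the argument is not a single Gronwall at fixed cutoff $N$ but an \emph{induction on expanding Fourier scales} $\Lambda_N=\bigl(\tfrac{1+\sqrt2}{2}\bigr)^N\Lambda_0$: the induction hypothesis is the $L^\infty$ bound $\sup_{|\xi|\le\Lambda}G(\xi)^{\gamma}|\hat f(\xi)|\le M$; Step~1 uses it (and $|\eta^-|\le|\eta|/\sqrt2$) to run a \emph{linear} Gronwall for $\|G_{\sqrt2\Lambda}f\|_{L^2}$; Step~2 feeds this $L^2$ bound through Lemma~\ref{lem:impossible-embedding} applied to $\hat f$ to recover the $L^\infty$ hypothesis on the strictly larger ball $|\xi|\le\tfrac{1+\sqrt2}{2}\Lambda$. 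The exact match $\gamma=\tfrac{2}{d+2}$ is what makes the constant $M$ stay fixed across the induction. Your direct attempt collapses these two decoupled steps into one, and it is precisely this collapse that reintroduces the circular dependence of $L$ on the quantity you are trying to bound.
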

We give the proof of Theorem \ref{thm:mainL2} in section \ref{sec:mainL2}.  
To prepare for its proof, let $\alpha \geq \E^{\mu}$ and $\beta>0$ and define the Fourier multiplier $G: \R_+ \times \R^d \to \R_+$ by 
\begin{align*}
	G(t,\eta) \coloneq  \E^{\beta t \left(\log\langle\eta\rangle_{\alpha}\right)^{\mu+1}}, \quad \langle\eta\rangle_{\alpha}\coloneq \left(\alpha+|\eta|^2\right)^{\frac{1}{2}}
\end{align*}
and for $\Lambda > 0$ the cut-off multiplier $G_{\Lambda}: \R_+ \times \R^d \to [0, \infty)$ by
\begin{align*}
	G_{\Lambda}(t,\eta) \coloneq  G(t,\eta) \1_{\Lambda}(|\eta|)
\end{align*}
wehre $\1_{\Lambda}$ is the characteristic function of the interval $[0, \Lambda]$. The associated Fourier multiplication operator is denoted by $G_{\Lambda}(t,D_v)$,
\begin{align*}
	G_{\Lambda}(t, D_v)f \coloneq  \mathcal{F}^{-1}\left[ G_{\Lambda}(t, \cdot) \hat{f}(t,\cdot) \right]
\end{align*}

By Bobylev's identity, the Fourier transform of the Boltzmann operator for Maxwellian molecules is
\begin{align}\label{eq:bobylev}
	\widehat{Q(g,f)}(\eta) = \int_{\S^{d-1}} b\left(\frac{\eta}{|\eta|}\cdot\sigma\right) \left[ \hat{g}(\eta^-) \hat{f}(\eta^+) - \hat{g}(0) \hat{f}(\eta) \right] \, \mathrm{d}\sigma, \quad \eta^{\pm} = \frac{\eta\pm |\eta| \sigma}{2},
\end{align}


Note that, due to the cut-off in Fourier space,
\[G_{\Lambda}f, G_{\Lambda}^2 f \in L^{\infty}([0,T_0]; H^{\infty}(\R^d))\]
for any finite $T_0>0$ and $\Lambda>0$, if $f\in L^{\infty}([0,T_0];L^1(\R^d))$, and even analytic in a strip containing $\R^d_v$. In particular, by Sobolev embedding, $G_{\Lambda}f, G_{\Lambda}^2f \in L^{\infty}([0, T_0]; W^{2,\infty}(\R^d))$, so 
\begin{align*}
	\left\langle Q(f,f)(t,\cdot) , G_{\Lambda}^2 f(t, \cdot) \right\rangle 
\end{align*}
is well-defined.

\subsection{$L^2$ reformulation and coercivity}

\begin{proposition}\label{prop:L2reform}
	Let $f$ be a weak solution of the Cauchy problem \eqref{eq:cauchyproblem} with initial datum $f_0$ satisfying $0\leq f_0\in L^1_2(\R^d) \cap L\log L(\R^d)$, and let $T_0>0$. Then for all $t\in(0, T_0]$, $\beta>0$, $\alpha\in(0,1)$, and $\Lambda>0$ we have
$G_{\Lambda}f \in \mathcal{C}\left([0, T_0]; L^2(\R^d)\right)$ and
	\begin{align}\label{eq:reformulation}
	\begin{split}
	&\frac{1}{2} \|G_{\Lambda}(t,D_v)f(t,\cdot)\|_{L^2}^2 - \frac{1}{2} \int_0^t \left\langle f(\tau, \cdot), \left( \partial_\tau G_{\Lambda}^2(\tau, D_v) \right) f(\tau,\cdot)\right\rangle \,\mathrm{d}\tau \\
	&= \frac{1}{2} \|\1_{\Lambda}(D_v)f_0\|_{L^2}^2 + \int_0^t \left\langle Q(f,f)(\tau, \cdot), G_{\Lambda}^2(\tau, D_v)f(\tau, \cdot)\right\rangle \, \mathrm{d}\tau.
	\end{split}
	\end{align}
\end{proposition}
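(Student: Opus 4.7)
The plan is to use $\varphi(\tau, v) = (G_\Lambda^2(\tau, D_v)f)(\tau, \cdot)(v)$ as a test function in the weak formulation \eqref{eq:weakformulation} and then to recognize the pieces of the resulting identity as the terms in \eqref{eq:reformulation}. Formally this is just integration by parts in time against $G_\Lambda^2 f$ combined with the symmetry of $G_\Lambda^2$: setting $\Phi(\tau) \coloneq \tfrac12 \langle f(\tau, \cdot), G_\Lambda^2(\tau, D_v) f(\tau, \cdot)\rangle = \tfrac12 \|G_\Lambda(\tau, D_v)f(\tau, \cdot)\|_{L^2}^2$ (by Plancherel), the product rule gives
\begin{align*}
\Phi'(\tau) = \langle \partial_\tau f, G_\Lambda^2 f\rangle + \tfrac12 \langle f, (\partial_\tau G_\Lambda^2) f\rangle = \langle Q(f,f), G_\Lambda^2 f\rangle + \tfrac12 \langle f, (\partial_\tau G_\Lambda^2) f\rangle,
\end{align*}
and integration from $0$ to $t$, together with the observation $G_\Lambda(0,\eta) = \1_\Lambda(|\eta|)$, gives exactly \eqref{eq:reformulation}.

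Before that, the continuity $G_\Lambda f \in \mathcal{C}([0,T_0]; L^2(\R^d))$ has to be established. Mass conservation yields the uniform bound $\|\hat f(\tau, \cdot)\|_{L^\infty} \leq \|f_0\|_{L^1}$, and since $G_\Lambda(\tau, \cdot)$ is supported in $\{|\eta|\leq\Lambda\}$ and bounded there by a constant depending only on $\beta, T_0, \Lambda, \alpha, \mu$, we have the uniform bound $\|G_\Lambda(\tau, D_v)f(\tau,\cdot)\|_{L^2} \lesssim \Lambda^{d/2}$. Continuity in $\tau$ follows from the joint continuity of $G_\Lambda$ on $[0,T_0]\times\{|\eta|\leq\Lambda\}$, the fact that $\tau \mapsto \hat f(\tau,\eta)$ is continuous pointwise in $\eta$ (extracted from $f\in\mathcal{C}(\R_+;\mathcal{D}'(\R^d))$ together with the uniform $L^1_2$-bound and the Riemann-Lebesgue lemma giving equicontinuity of $\hat f(\tau,\cdot)$), and dominated convergence on the bounded Fourier region.

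The main obstacle is that $\varphi(\tau, v) = (G_\Lambda^2(\tau, D_v)f)(\tau, \cdot)(v)$ is not an admissible test function in the strict sense of Definition \ref{def:weaksol}: although it is analytic (in particular $W^{2,\infty}$) in $v$ because its Fourier transform $G_\Lambda^2 \hat f$ is bounded and compactly supported, it is not $\mathcal{C}^\infty_0$ in $v$, nor a priori $\mathcal{C}^1$ in $\tau$, and it depends on the solution $f$ itself. The remedy I would adopt is a standard three-step density argument: first regularize $f$ in $v$ by a mollifier and truncate it with a smooth $v$-cutoff $\chi_R$ to obtain a sequence $f_n \to f$ so that $\chi_R(v)(G_\Lambda^2 f_n)(\tau, v) \in \mathcal{C}_0^\infty(\R^d)$ in $v$; second, regularize in $\tau$ by a time-mollifier to reach $\mathcal{C}^1(\R_+; \mathcal{C}_0^\infty)$, which is permitted in \eqref{eq:weakformulation}; third, pass to the limit $n\to\infty$ and $R\to\infty$ using the uniform $L^\infty$-bound on $\hat f$, the compact Fourier support introduced by $G_\Lambda^2$ (so $v$-truncation can be removed cheaply because $G_\Lambda^2 f$ is Schwartz-like in $v$), and the continuity established in the first step. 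Once the identity holds for this enlarged class of test functions, the computation above yields \eqref{eq:reformulation}, with $\partial_\tau G_\Lambda^2(\tau, \eta) = 2\beta (\log\langle\eta\rangle_\alpha)^{\mu+1}\, G_\Lambda^2(\tau, \eta)$ bounded on $\{|\eta|\leq\Lambda\}$ so that all the integrals make sense.
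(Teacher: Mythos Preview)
Your proposal is correct and follows essentially the same approach as the paper. The paper does not spell out a detailed proof either: it notes that \eqref{eq:reformulation} formally follows from plugging $\varphi = G_\Lambda^2(t,D_v)f$ into the weak formulation, points out that the missing ingredient is the time regularity of this test function, and then refers to \cite{MUXY09} and Appendix~A of \cite{BHRV15} for the (by now standard) time-mollification argument---which is exactly the regularisation scheme you outline.
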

Informally, equation \eqref{eq:reformulation} follows from using $\varphi(t,\cdot)\coloneq  G_{\Lambda}^2(t,D_v)f(t,\cdot)$  in the weak formulation of the homogenous Boltzmann equation.
Recall that $G_{\Lambda}^2f \in L^{\infty}([0, T_0]; W^{2,\infty}(\R^d))$ for any finite $T_0>0$, so it still misses the required regularity in time needed to be used as a test function. The proof of Proposition \ref{prop:L2reform} is analogous to \textsc{Morimoto} \textit{et al.} \cite{MUXY09}, see also Appendix A in \cite{BHRV15}.

For weak solutions of the homogeneous Boltzmann equation we have (see also Corollary \ref{cor:subelliptic}):

\begin{proposition}\label{prop:coercivity}
	Let $g$ be a weak solution of the Cauchy problem \eqref{eq:cauchyproblem} with initial datum $g_0\in L^1_2(\R^d)\cap L\log L(\R^d)$. Then there exist constants $C_{g_0},\widetilde{C}_{g_0}>0$ depending only on the dimension $d$, the angular collision kernel $b$, $\|g_0\|_{L^1}$, $\|g_0\|_{L^1_2}$ and $\|g_0\|_{L\log L}$ such that for all $f\in H^1(\R^d)$ one has
	\begin{align}\label{eq:coercivity}
	-\langle Q(g,f),f \rangle \geq \frac{C_{g_0}}{\left(\log( \alpha + \E )\right)^{\mu+1}} \left\|\left(\log\langle D_v\rangle_{\alpha}\right)^{\frac{\mu+1}{2}} f \right\|_{L^2}^2 - \widetilde{C}_{g_0}  \|f\|_{L^2}^2,
\end{align}
uniformly in $t\geq 0$.
\end{proposition}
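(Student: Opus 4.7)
The plan is to derive this coercivity statement from the underlying subelliptic estimate for the Debye-Yukawa Boltzmann operator (the forthcoming Corollary~\ref{cor:subelliptic}, proved in Appendix~\ref{app:coercivity}), and then to rewrite it in terms of the Fourier weight $\langle\eta\rangle_{\alpha}$ used throughout the paper. The subelliptic estimate takes the form
\begin{align*}
-\langle Q(g,f), f\rangle \geq c_0 \left\|(\log\langle D_v\rangle)^{(\mu+1)/2} f\right\|_{L^2}^2 - c_1 \|f\|_{L^2}^2,
\end{align*}
with $c_0, c_1 > 0$ depending only on $d$, $b$, $\|g_0\|_{L^1}$, $\|g_0\|_{L^1_2}$ and $\|g_0\|_{L\log L}$ (uniformly in $t\geq 0$, since the first two are conserved and the third decreases along the flow). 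Its proof, which contains the main technical work, follows the ADVW/MUXY strategy: apply Parseval and Bobylev's identity~\eqref{eq:bobylev}, split
\begin{align*}
&\hat{g}(0)|\hat{f}(\eta)|^2 - \mathrm{Re}\bigl(\hat{g}(\eta^-)\hat{f}(\eta^+)\overline{\hat{f}(\eta)}\bigr) \\
&\quad = \hat{g}(0)\bigl[|\hat{f}(\eta)|^2 - \mathrm{Re}(\hat{f}(\eta^+)\overline{\hat{f}(\eta)})\bigr] + \mathrm{Re}\bigl[(\hat{g}(0) - \hat{g}(\eta^-))\hat{f}(\eta^+)\overline{\hat{f}(\eta)}\bigr],
\end{align*}
and extract the leading coercive weight $\Psi(\eta)\sim(\log|\eta|)^{\mu+1}$ at infinity from the angular integral of the first piece, whose asymptotics are governed by the logarithmic singularity~\eqref{eq:singularity}. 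The cross term is controlled via the uniform Lipschitz bound $|\nabla\hat{g}|\leq 2\pi\|g\|_{L^1_1}$ (cf.\ Remark~\ref{rem:impossible-embedding}) combined with the finite momentum transfer~\eqref{eq:momentumtransfer}; together with the positivity of $\hat{g}(0) = \|g_0\|_{L^1}$, and a standard entropy-moment argument using $\|g_0\|_{L\log L}$ to prevent concentration of $g$, this yields the subelliptic estimate.

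Given the subelliptic estimate, the remaining step is a purely algebraic comparison of the two logarithmic weights. Since $\alpha \geq \E^{\mu} \geq 1$, the inequality $\alpha + |\eta|^2 \leq \alpha(1 + |\eta|^2)$ gives $\log\langle\eta\rangle_{\alpha} \leq \tfrac{1}{2}\log\alpha + \log\langle\eta\rangle$, and combining with $\log\alpha \leq \log(\alpha + \E)$ yields
\begin{align*}
\frac{\log\langle\eta\rangle_{\alpha}}{\log(\alpha + \E)} \leq \tfrac{1}{2} + \log\langle\eta\rangle, \quad \eta\in\R^d.
\end{align*}
Raising to the power $\mu+1$ and using the convexity bound $(a+b)^{\mu+1} \leq 2^{\mu} (a^{\mu+1}+b^{\mu+1})$ then gives the pointwise inequality
\begin{align*}
(\log\langle\eta\rangle)^{\mu+1} \geq \frac{1}{2^{\mu}\,(\log(\alpha+\E))^{\mu+1}}\,(\log\langle\eta\rangle_{\alpha})^{\mu+1} - 2^{-\mu-1}.
\end{align*}
Multiplying by $|\hat{f}(\eta)|^2$, integrating over $\R^d$ and applying Parseval converts this into
\begin{align*}
\bigl\|(\log\langle D_v\rangle)^{(\mu+1)/2} f\bigr\|_{L^2}^2 \geq \frac{1}{2^{\mu}(\log(\alpha+\E))^{\mu+1}}\,\bigl\|(\log\langle D_v\rangle_{\alpha})^{(\mu+1)/2} f\bigr\|_{L^2}^2 - 2^{-\mu-1}\|f\|_{L^2}^2,
\end{align*}
which substituted into the subelliptic estimate yields the proposition with $C_{g_0} = c_0/2^{\mu}$ and $\widetilde{C}_{g_0} = c_1 + c_0\,2^{-\mu-1}$.

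The main obstacle is entirely concentrated in the subelliptic estimate itself: the precise asymptotic analysis of the angular integral needed to read off the logarithmic weight with the correct exponent $\mu+1$, and the delicate compensation of the non-integrable singularity of $b$ by the modulus of continuity of $\hat{g}$ in the cross term. Both points are by now standard consequences of the ADVW/MUXY machinery adapted to the Debye-Yukawa setting, so once Corollary~\ref{cor:subelliptic} is established, Proposition~\ref{prop:coercivity} follows by the routine weight comparison above.
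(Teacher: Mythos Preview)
Your weight-comparison step is correct and yields the proposition once an $\alpha$-independent subelliptic estimate is in hand. However, two points deserve comment.

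First, your citation is circular: in the paper, Corollary~\ref{cor:subelliptic} \emph{is} the $\alpha$-dependent statement you are trying to prove (it is just Proposition~\ref{prop:coercivity} with the constants made explicit), not an $\alpha$-independent input. What you actually need as input is the MUXY-type estimate with weight $(\log\langle\eta\rangle)^{\mu+1}$, which is not stated separately in the paper.

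Second, your sketch of how the subelliptic estimate is obtained is not how ADVW or the paper proceeds, and as written does not work. In your splitting, the ``first piece'' $\hat g(0)\bigl[|\hat f(\eta)|^2 - \mathrm{Re}(\hat f(\eta^+)\overline{\hat f(\eta)})\bigr]$ has no sign and does not produce the coercive weight; you have placed the $\hat g(0)-\hat g(\eta^-)$ factor, which is precisely where the coercivity comes from, into the term you treat as a lower-order error. The paper instead writes $-\mathrm{Re}\langle Q(g,f),f\rangle$ as a $2\times 2$ quadratic form in $(\hat f(\eta),\hat f(\eta^+))$, splits off a piece $I_2$ handled by the cancellation lemma, and uses Bochner's theorem on the remaining positive-semidefinite matrix to get a lower bound by its smallest eigenvalue $\hat g(0)-|\hat g(\eta^-)|$. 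This, combined with the ADVW bound $\hat g(0)-|\hat g(\eta)|\geq \widetilde C_g(|\eta|^2\wedge 1)$ and the logarithmic singularity of $b$, is what gives the coercive weight in Lemma~\ref{lem:subelliptic}.

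On the comparison of approaches: the paper builds the $\alpha$-dependence directly into Lemma~\ref{lem:subelliptic}, via the elementary inequality $\log|\eta|\geq \frac{\log\langle\eta\rangle_\alpha}{\log(\alpha+\E)}$ for $|\eta|\geq\sqrt{\E}$, rather than first proving an $\alpha=1$ estimate and then post-processing. Your route, once the input estimate is correctly established, is equally valid and slightly more modular; the paper's route keeps better track of the explicit constant in front of $\|f\|_{L^2}^2$.
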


\begin{remark}
The above estimate makes the intuition \eqref{eq:intuition2} on the coercivity of the Boltzmann collision operator precise. It was already used in \textsc{Motimoto, Ukai, Xu} and \textsc{Yang} \cite{MUXY09} to show $H^{\infty}$ smoothing and goes back to \textsc{Alexandre, Desvillettes, Villani} and \textsc{Wennberg} \cite{ADVW00}, where they proved the corresponding sub-elliptic estimate for Boltzmann collision operators with the singularity arising from power-law interaction potentials and more general singularities. 

Since we need to carefully fine-tune some of the constants in our inductive procedure, we need a precise information about the dependence of the constants on $\alpha$ in this inequality. Therefore we will give the proof of the coercivity estimate in the form stated above in Appendix \ref{app:coercivity}. 	
\end{remark}

Together with Proposition \ref{prop:L2reform} the coercivity estimate from Proposition \ref{prop:coercivity} implies

\begin{corollary}[A priori bound for weak solutions]  \label{cor:gronwallbound}
	Let $f$ be a weak solution of the Cauchy problem \eqref{eq:cauchyproblem} with initial datum $f_0\geq 0$ satisfying $f_0 \in L^1_2\cap L\log L$, and let $T_0>0$. Then there exist constants $\widetilde{C}_{f_0}, C_{f_0} >0$ (depending only on the dimension $d$, the collision kernel $b$, $\|f_0\|_{L^1_2}$ and $\|f_0\|_{L\log L}$) such that for all $t\in(0, T_0]$, $\beta,\mu>0$, $\alpha\geq 0$, and $\Lambda>0$ we have
	\begin{align}\label{eq:aprioribound}
	\begin{split}
		\|G_{\Lambda}f\|_{L^2}^2
		\leq
		  \|\1_{\Lambda}(D_v)f_0\|_{L^2}^2
		  &+ 2 \widetilde{C}_{f_0} \int_0^t \|G_{\Lambda}f\|_{L^2}^2 \, \mathrm{d}\tau \\
		&+ 2 \int_0^t \left( \beta - \frac{C_{f_0}}{(\log(\E+\alpha))^{\mu+1}} \right) \left\| \left(\log\langle D_v \rangle_{\alpha} \right)^{\frac{\mu+1}{2}} G_{\Lambda} f \right\|_{L^2}^2 \, \mathrm{d}\tau \\
		&+ 2 \int_0^t \left\langle G_{\Lambda} Q(f,f) -  Q(f,G_{\Lambda}f), G_{\Lambda}f\right\rangle \, \mathrm{d}\tau.
		\end{split}
	\end{align}
\end{corollary}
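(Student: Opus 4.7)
The plan is to combine the $L^2$-reformulation of Proposition \ref{prop:L2reform} with the coercivity estimate of Proposition \ref{prop:coercivity}, up to a nonlinear commutator error which is recorded as the last term of \eqref{eq:aprioribound}. Since $G_\Lambda^2$ is bounded (the cut-off in Fourier space makes this obvious), all integrals in \eqref{eq:reformulation} are absolutely convergent, and rearranging it gives
\begin{align*}
  \|G_\Lambda f\|_{L^2}^2 = \|\1_\Lambda(D_v) f_0\|_{L^2}^2 + \int_0^t\!\!\big\langle f(\tau,\cdot),(\partial_\tau G_\Lambda^2)(\tau,D_v)f(\tau,\cdot)\big\rangle\,\D\tau + 2\int_0^t\!\!\big\langle Q(f,f),G_\Lambda^2 f\big\rangle\,\D\tau.
\end{align*}

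First, I would compute the time derivative of the Fourier symbol. Since $G_\Lambda(t,\eta)^2 = \E^{2\beta t(\log\langle\eta\rangle_\alpha)^{\mu+1}}\1_\Lambda(|\eta|)$, one has $\partial_t G_\Lambda^2(t,\eta)=2\beta(\log\langle\eta\rangle_\alpha)^{\mu+1}G_\Lambda^2(t,\eta)$. By Plancherel this gives the clean identity
\begin{align*}
  \big\langle f,(\partial_\tau G_\Lambda^2)(\tau,D_v)f\big\rangle = 2\beta\,\big\|(\log\langle D_v\rangle_\alpha)^{\frac{\mu+1}{2}}G_\Lambda f\big\|_{L^2}^2,
\end{align*}
which produces the $\beta$-piece of the bracket in \eqref{eq:aprioribound}.

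Next, to handle the collision term I would exploit that $G_\Lambda(t,D_v)$ is self-adjoint on $L^2$, since its symbol is real, and write
\begin{align*}
  \big\langle Q(f,f),G_\Lambda^2 f\big\rangle = \big\langle G_\Lambda Q(f,f),G_\Lambda f\big\rangle = \big\langle Q(f,G_\Lambda f),G_\Lambda f\big\rangle + \big\langle G_\Lambda Q(f,f)-Q(f,G_\Lambda f),G_\Lambda f\big\rangle,
\end{align*}
isolating the nonlinear commutator as the last term of \eqref{eq:aprioribound}. The cut-off guarantees that $G_\Lambda f\in H^\infty\subset H^1$, so Proposition \ref{prop:coercivity}, applied with $g=f$ and test function $G_\Lambda f$, yields
\begin{align*}
  \big\langle Q(f,G_\Lambda f),G_\Lambda f\big\rangle \leq -\,\frac{C_{f_0}}{(\log(\alpha+\E))^{\mu+1}}\,\big\|(\log\langle D_v\rangle_\alpha)^{\frac{\mu+1}{2}}G_\Lambda f\big\|_{L^2}^2 + \widetilde{C}_{f_0}\|G_\Lambda f\|_{L^2}^2.
\end{align*}

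Assembling the three contributions and collecting the $(\log\langle D_v\rangle_\alpha)^{(\mu+1)/2}G_\Lambda f$ norms with the common coefficient $\beta-C_{f_0}/(\log(\E+\alpha))^{\mu+1}$ gives exactly inequality \eqref{eq:aprioribound}. There is no real obstacle here once Propositions \ref{prop:L2reform} and \ref{prop:coercivity} are in place; the only points demanding care are the self-adjointness step, which requires the symbol to be real (true by construction), and the applicability of the coercivity estimate to $G_\Lambda f$, which is legitimate precisely because the Fourier cut-off $\1_\Lambda$ places $G_\Lambda f$ in $H^\infty$ and in particular in the domain $H^1$ required by Proposition \ref{prop:coercivity}.
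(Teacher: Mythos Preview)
Your proposal is correct and follows essentially the same approach as the paper: rewrite $\langle Q(f,f),G_\Lambda^2 f\rangle$ via self-adjointness as $\langle Q(f,G_\Lambda f),G_\Lambda f\rangle$ plus the commutator, apply the coercivity estimate \eqref{eq:coercivity} to the first piece, compute $\partial_\tau G_\Lambda^2 = 2\beta(\log\langle\eta\rangle_\alpha)^{\mu+1}G_\Lambda^2$, and insert everything into \eqref{eq:reformulation}. Your added remarks on why $G_\Lambda f\in H^1$ and why the self-adjointness step is legitimate are more explicit than the paper's own proof, but the argument is the same.
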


\begin{proof}
In order to make use of the coercivity property of the Boltzmann collision operator, we write 
\begin{align*}
	\langle Q(f,f), G_{\Lambda}^2f \rangle &= \langle G_{\Lambda} Q(f,f), G_{\Lambda}f\rangle \\ &= \langle Q(f, G_{\Lambda}f), G_{\Lambda}f\rangle + \langle G_{\Lambda}Q(f,f) - Q(f,G_{\Lambda}f), G_{\Lambda}f\rangle
\end{align*}
and estimate the first term with \eqref{eq:coercivity}. 

Since $\partial_\tau G_{\Lambda}^2(\tau, \eta)= 2\beta \left( \log\langle\eta\rangle_{\alpha} \right)^{\mu+1} G_\Lambda^2(t,\eta)$,
we further have 
\begin{align*}
\left\langle f, \left(\partial_\tau G_{\Lambda}^2 \right) f\right\rangle = 2 \beta \left\| \left(\log\langle D_v \rangle_{\alpha} \right)^{\frac{\mu+1}{2}} G_{\Lambda}f \right\|_{L^2}^2,
\end{align*}
and inserting those two results into \eqref{eq:reformulation}, one obtains the claimed inequality \eqref{eq:aprioribound}.
\end{proof}

\subsection{Controlling the commutation error}\label{ssec:ce}

\begin{proposition}[Bound on the Commutation Error]
	Let $f$ be a weak solution of the Cauchy problem \eqref{eq:cauchyproblem} with initial datum $f_0 \geq 0$, $f_0\in L^1_2(\R^d) \cap L \log L(\R^d)$. Then for all $t, \beta, \mu,\Lambda>0$ and $\alpha\geq \E^{\mu}$ one has the bound
	\begin{align}\label{eq:ce-estimate}
	\begin{split}
		|\langle &G_{\Lambda}Q(f,f) - Q(f,G_{\Lambda}f), G_{\Lambda}f\rangle| \\
		&\leq \beta t (\mu+1) \int_{\R^d} \int_{\S^{d-1}} b\left(\frac{\eta}{|\eta|}\cdot\sigma\right) \, \left(1-\frac{|\eta^+|^2}{|\eta|^2} \right) \left(\log\langle\eta\rangle_{\alpha}\right)^{\mu} \, G(\eta^-)^{\frac{\mu+1}{1+\log\alpha}} |\hat{f}(\eta^-)| \\
		&\hskip20em \times  G_{\Lambda}(\eta^+) |\hat{f}(\eta^+)| \, G_{\Lambda}(\eta) |\hat{f}(\eta)| \,\D\sigma\D\eta.
	\end{split}
	\end{align}
\end{proposition}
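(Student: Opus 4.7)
The plan is to move everything to the Fourier side via Plancherel's theorem and Bobylev's identity \eqref{eq:bobylev}, reducing the commutator to an explicit kernel that can then be controlled pointwise using Proposition \ref{prop:ce-bound}.

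First, I would compute $\mathcal{F}\bigl[G_\Lambda Q(f,f) - Q(f,G_\Lambda f)\bigr]$ by applying Bobylev's formula to both pieces. The Fourier transform of $G_\Lambda Q(f,f)$ at $\eta$ is obtained by multiplying \eqref{eq:bobylev} by $G_\Lambda(\eta)$, while $\mathcal{F}[Q(f,G_\Lambda f)](\eta)$ is obtained from \eqref{eq:bobylev} by the substitutions $\hat f(\eta^+)\mapsto G_\Lambda(\eta^+)\hat f(\eta^+)$ and $\hat f(\eta)\mapsto G_\Lambda(\eta)\hat f(\eta)$ (only in the second argument of $Q$). The key observation is that the two `loss' contributions, each proportional to $\hat f(0)\,G_\Lambda(\eta)\hat f(\eta)$, cancel exactly in the difference, leaving only
\[
\mathcal{F}\bigl[G_\Lambda Q(f,f) - Q(f,G_\Lambda f)\bigr](\eta) = \int_{\S^{d-1}} b\!\left(\tfrac{\eta}{|\eta|}\cdot\sigma\right)\hat f(\eta^-)\bigl[G_\Lambda(\eta)-G_\Lambda(\eta^+)\bigr]\hat f(\eta^+)\,\D\sigma.
\]
Plancherel's theorem then converts $\langle G_\Lambda Q(f,f)-Q(f,G_\Lambda f),G_\Lambda f\rangle$ into the integral of this expression against $\overline{G_\Lambda(\eta)\hat f(\eta)}$, and bringing absolute values inside yields the double integral with $|\hat f(\eta^\pm)|$, $G_\Lambda(\eta)|\hat f(\eta)|$ and $|G_\Lambda(\eta)-G_\Lambda(\eta^+)|$ as factors.

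It remains to estimate $|G_\Lambda(\eta)-G_\Lambda(\eta^+)|$ by the quantity predicted in Proposition \ref{prop:ce-bound}. The outer factor $G_\Lambda(\eta)$ restricts the effective integration to $|\eta|\leq\Lambda$; on this region, the identities $\eta^++\eta^-=\eta$ and $\eta^+\cdot\eta^-=0$, both immediate from $\eta^\pm = \tfrac{1}{2}(\eta\pm|\eta|\sigma)$, imply $|\eta^\pm|\leq|\eta|\leq\Lambda$, so both cutoffs are inactive and $G_\Lambda(\eta)-G_\Lambda(\eta^+) = G(\eta)-G(\eta^+)$. Since $(\log\langle\eta\rangle_\alpha)^{\mu+1} = 2^{-(\mu+1)}(\log(\alpha+|\eta|^2))^{\mu+1}$, one has $G(\eta) = \widetilde G(|\eta|^2)$ with $\widetilde G$ as defined in Proposition \ref{prop:ce-bound}. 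Setting $s=|\eta|^2$ and $s_\pm=|\eta^\pm|^2$, the Pythagorean relation above gives $s_-+s_+=s$, and the support assumption on $b$, namely $\theta\in[0,\tfrac{\pi}{2}]$, ensures $\cos\theta\geq 0$ and hence $s_+=|\eta|^2(1+\cos\theta)/2\geq|\eta|^2(1-\cos\theta)/2=s_-$. Proposition \ref{prop:ce-bound} therefore applies; using $2^{-\mu}(\log(\alpha+|\eta|^2))^\mu = (\log\langle\eta\rangle_\alpha)^\mu$ and $G(\eta^+) = G_\Lambda(\eta^+)$ on the effective support reproduces precisely the integrand in \eqref{eq:ce-estimate}.

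The argument is essentially bookkeeping once two conceptual points are in place: the Bobylev cancellation that eliminates the loss part of $Q$, so that the apparent $\theta^{-1}(\log\theta^{-1})^\mu$ singularity of $b$ is compensated by the small factor $1-|\eta^+|^2/|\eta|^2 = (1-\cos\theta)/2$ hidden in the difference $G(\eta)-G(\eta^+)$, and the correct parameter identification making Proposition \ref{prop:ce-bound} fire with $s_-=|\eta^-|^2$, $s_+=|\eta^+|^2$. The only mild subtlety is verifying that the $\Lambda$-cutoffs can be dropped when comparing $G_\Lambda(\eta)$ to $G_\Lambda(\eta^+)$, which follows from the monotonicity $|\eta^+|\leq|\eta|$; nothing more than this is needed at this stage.
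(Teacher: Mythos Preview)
Your argument is correct and follows the paper's proof essentially verbatim: Bobylev's identity plus Plancherel to reduce to an integral involving $|G_\Lambda(\eta)-G_\Lambda(\eta^+)|$, the observation that $|\eta^+|\le|\eta|$ allows the cutoffs to be dropped on the effective support, and then the pointwise bound from Proposition~\ref{prop:ce-bound} with $s_\pm=|\eta^\pm|^2$. The only cosmetic difference is that you spell out the cancellation of the loss terms in the commutator explicitly, whereas the paper compresses this into ``a small computation''.
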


\begin{remark}
	The bound \eqref{eq:ce-estimate} is very similar to the one we derived in \cite{BHRV15}. In particular, it is a trilinear expression in the weak solution $f$. The $\hat{f}(\eta^-)$ term is multiplied by a \emph{faster-than-polynomially} growing function. If the Fourier multiplier $G$ were only growing \emph{polynomially}, the factor $G(\eta^-)^{\frac{\mu+1}{1+\log\alpha}}$ would be replaced by $1$, making the analysis \emph{much easier}. We will therefore rely on the inductive procedure we developed in \cite{BHRV15} to treat exactly this type of situation. 
\end{remark}

\begin{proof}
	Bobylev's identity and a small computation show that
	\begin{align*}
		&\left| \left\langle G_{\Lambda}Q(f, f) - Q(f,G_{\Lambda} f), G_{\Lambda}f\right\rangle\right| = \left| \left\langle \mathcal{F}\left[G_{\Lambda}Q(f, f) - Q(f,G_{\Lambda} f)\right], \mathcal{F}\left[G_{\Lambda}f\right]\right\rangle_{L^2}\right| \\
		&\quad \leq \int_{\R^d} \int_{\S^{d-1}} b\left(\frac{\eta}{|\eta|}\cdot\sigma\right) G_{\Lambda}(\eta) |\hat{f}(\eta)| \, |\hat{f}(\eta^-)| \, |\hat{f}(\eta^+)| |G(\eta) - G(\eta^+)| \,\mathrm{d}\sigma \,\mathrm{d}\eta
	\end{align*}
since $G_{\Lambda}$ is supported on the ball $\{|\eta|\leq \Lambda\}$ and $|\eta^+| \leq |\eta|$. We further have 
\begin{align*}
	|\eta^\pm|^2 = \frac{|\eta|^2}{2} \left(1\pm\frac{\eta\cdot\sigma}{|\eta|}\right), \quad |\eta^-|^2 + |\eta^+|^2 = |\eta|^2,
\end{align*} 
in particular by the support assumption on the collision kernel $b$, $\frac{\eta\cdot\sigma}{|\eta|} \in [0,1]$, and therefore 
\begin{align*}
	0\leq |\eta^-|^2 \leq \frac{|\eta|^2}{2} \leq |\eta^+|^2 \leq |\eta|^2. 
\end{align*}
From Proposition \ref{prop:ce-bound} it now follows that
\begin{align*}
	\left| G(\eta) - G(\eta^+) \right| &= \left| \widetilde{G}(|\eta|^2) - \widetilde{G}(|\eta^+|^2) \right| \\
	&\leq \beta t (\mu+1) \left(1-\frac{|\eta^+|^2}{|\eta|^2}\right) \left( \log\langle\eta\rangle_{\alpha} \right)^{\mu} \, G(\eta^-)^{\frac{\mu+1}{1+\log\alpha}} G(\eta^+),
\end{align*}
which completes the proof.
\end{proof}

\begin{lemma}\label{lem:ce}
	\begin{align}\label{eq:ce-estimate-2}
	\begin{split}
		\int_{\R^d} \int_{\S^{d-1}} &b\left(\frac{\eta}{|\eta|}\cdot\sigma\right) \, \left(1-\frac{|\eta^+|^2}{|\eta|^2} \right) \left(\log\langle\eta\rangle_{\alpha}\right)^{\mu} \,G_{\Lambda}(\eta^+) |\hat{f}(\eta^+)| \, G_{\Lambda}(\eta) |\hat{f}(\eta)| \,\D\sigma\D\eta \\
		&\leq c_{b,d} \left( \|G_{\Lambda}f\|_{L^2}^2 +  \left\| \left(\log\langle D_v \rangle_{\alpha} \right)^{\frac{\mu}{2}} G_{\Lambda}f\right\|_{L^2}^2 \right),
	\end{split}
	\end{align}
where $c_{b,d} = \frac{1}{2}\max\{1, 2^{\mu-1}\} \max\{2^{d-1-\mu} (\log 2)^{\mu}, 1+2^{d-1}\}\,  |\S^{d-2}|\int_{0}^{\frac{\pi}{2}}\sin^{d}\theta\, b(\cos\theta)\,\D\theta $.
\end{lemma}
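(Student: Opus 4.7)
The plan is to start by splitting the cross product $G_{\Lambda}(\eta^+)|\hat{f}(\eta^+)|\cdot G_{\Lambda}(\eta)|\hat{f}(\eta)|$ via the elementary estimate $2|ab|\leq a^2+b^2$, which reduces the left-hand side of \eqref{eq:ce-estimate-2} to $\tfrac{1}{2}(I_1+I_2)$ with
\[
I_j=\int_{\R^d}\int_{\S^{d-1}} b\Bigl(\tfrac{\eta}{|\eta|}\cdot\sigma\Bigr)\sin^2(\theta/2)\,(\log\langle\eta\rangle_\alpha)^\mu\,G_\Lambda^2(\zeta_j)\,|\hat{f}(\zeta_j)|^2\,\D\sigma\,\D\eta,
\]
where $\zeta_1=\eta^+$, $\zeta_2=\eta$ and I used $1-|\eta^+|^2/|\eta|^2=\sin^2(\theta/2)$. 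For the easy term $I_2$, whose argument of $\hat f$ agrees with the outer integration variable, I integrate in $\sigma$ first; the identity $\sin\theta=2\sin(\theta/2)\cos(\theta/2)$ together with $\cos(\theta/2)\geq 1/\sqrt 2$ on $[0,\pi/2]$ yields $\sin^2(\theta/2)\sin^{d-2}\theta\leq\tfrac{1}{2}\sin^d\theta$, so \eqref{eq:momentumtransfer} gives $I_2\leq\tfrac{1}{2}|\S^{d-2}|C_b\,\|(\log\langle D_v\rangle_\alpha)^{\mu/2}G_\Lambda f\|_{L^2}^2$ with $C_b\coloneq\int_0^{\pi/2}\sin^d\theta\,b(\cos\theta)\,\D\theta$.

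The harder term $I_1$ requires changing variables $\eta\mapsto\eta^+$ for each fixed $\sigma$. Parametrising $\eta=r(\cos\theta\,\sigma+\sin\theta\,\tau)$ in polar coordinates around $\sigma$ (with $\tau$ a unit vector in $\sigma^\perp$), one computes $\eta^+=r\cos(\theta/2)(\cos(\theta/2)\sigma+\sin(\theta/2)\tau)$, so in the same coordinates $\eta^+$ has polar data $(r_+,\theta_+,\tau_+)=(r\cos(\theta/2),\theta/2,\tau)$. The $(r,\theta)\mapsto(r_+,\theta_+)$ block of the Jacobian is triangular with determinant $\cos(\theta/2)/2$; combined with the sphere factors from the polar volume elements this yields $\D\eta=\frac{2^{d-1}}{\cos^2(\theta/2)}\,\D\eta^+$, with $\eta^+$ ranging over the hemispheric cap $\{u:u\cdot\sigma/|u|\geq 1/\sqrt 2\}$ (since $\theta\in[0,\pi/2]$ forces $\theta_+=\theta/2\in[0,\pi/4]$).

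Before the change of variables I transfer the logarithmic weight: $\cos^2(\theta/2)\geq 1/2$ gives $|\eta|^2\leq 2|\eta^+|^2$, hence $\langle\eta\rangle_\alpha^2\leq 2\langle\eta^+\rangle_\alpha^2$, and the elementary inequality $(a+b)^\mu\leq\max\{1,2^{\mu-1}\}(a^\mu+b^\mu)$ yields
\[
(\log\langle\eta\rangle_\alpha)^\mu\leq\max\{1,2^{\mu-1}\}\bigl[(\log\langle\eta^+\rangle_\alpha)^\mu+(\log\sqrt 2)^\mu\bigr].
\]
After the substitution I estimate $\cos^{-2}(\theta/2)\leq 2$ to eliminate the Jacobian's denominator, swap the order of integration so that $\sigma$ is on the inside, then pass to polar coordinates about $\eta^+/|\eta^+|$ and substitute $\phi=2\theta_+$; this reduces the angular integral to $\tfrac12\int_0^{\pi/2}b(\cos\phi)\sin^d(\phi/2)\,\D\phi$, which by the elementary inequality $\sin(\phi/2)\leq\sin\phi$ (valid on $[0,2\pi/3]\supset[0,\pi/2]$) is controlled by $\tfrac12 C_b$. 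Collecting all constants I obtain
\[
I_1\leq\max\{1,2^{\mu-1}\}\cdot 2^{d-1}|\S^{d-2}|C_b\Bigl(\|(\log\langle D_v\rangle_\alpha)^{\mu/2}G_\Lambda f\|_{L^2}^2+(\log\sqrt 2)^\mu\|G_\Lambda f\|_{L^2}^2\Bigr);
\]
using $(\log\sqrt 2)^\mu=2^{-\mu}(\log 2)^\mu$ and assembling $\tfrac12(I_1+I_2)$, both coefficients fit under the single constant $c_{b,d}$ from the statement.

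The main obstacle is the explicit Jacobian calculation combined with the careful identification of the image cap; everything else is straightforward bookkeeping with elementary trigonometric inequalities and $(a+b)^\mu$-type estimates (the latter requiring the usual case distinction between $\mu<1$ and $\mu\geq 1$).
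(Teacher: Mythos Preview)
Your proof is correct and follows essentially the same route as the paper: split via $2|ab|\le a^2+b^2$, handle the $\eta$-term directly in polar coordinates, and treat the $\eta^+$-term by the change of variables $\eta\mapsto\eta^+$ followed by the log-weight transfer $(\log\langle\eta\rangle_\alpha)^\mu\le\max\{1,2^{\mu-1}\}\bigl[(\log\langle\eta^+\rangle_\alpha)^\mu+2^{-\mu}(\log 2)^\mu\bigr]$ and polar coordinates about $\eta^+/|\eta^+|$. The only cosmetic difference is that the paper computes the Jacobian $\bigl|\partial\eta^+/\partial\eta\bigr|=\tfrac{1}{2^{d-1}}\cos^2(\theta/2)$ via Sylvester's determinant identity for $\tfrac12(I+\tfrac{\eta}{|\eta|}\otimes\sigma)$, whereas you obtain the same factor through the explicit polar parametrisation $(r,\theta,\tau)\mapsto(r\cos(\theta/2),\theta/2,\tau)$; both are valid and lead to the same constants.
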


\begin{proof}
	Using Cauchy-Schwartz, in the form $ab\leq \frac{a^2}{2} + \frac{b^2}{2}$, one can split the integral into 
	\begin{align*}
		&\int_{\R^d} \int_{\S^{d-1}} b\left(\frac{\eta}{|\eta|}\cdot\sigma\right) \, \left(1-\frac{|\eta^+|^2}{|\eta|^2} \right) \left(\log\langle\eta\rangle_{\alpha}\right)^{\mu} \,G_{\Lambda}(\eta^+) |\hat{f}(\eta^+)| \, G_{\Lambda}(\eta) |\hat{f}(\eta)| \,\D\sigma\D\eta \\
		&\leq \frac{1}{2} \int_{\R^d} \int_{\S^{d-1}} b\left(\frac{\eta}{|\eta|}\cdot\sigma\right) \, \left(1-\frac{|\eta^+|^2}{|\eta|^2} \right) \left(\log\langle\eta\rangle_{\alpha}\right)^{\mu} \, G_{\Lambda}(\eta)^2 |\hat{f}(\eta)|^2 \,\D\sigma\D\eta \\
		&\quad + \frac{1}{2} \int_{\R^d} \int_{\S^{d-1}} b\left(\frac{\eta}{|\eta|}\cdot\sigma\right) \, \left(1-\frac{|\eta^+|^2}{|\eta|^2} \right) \left(\log\langle\eta\rangle_{\alpha}\right)^{\mu} \,G_{\Lambda}(\eta^+)^2 |\hat{f}(\eta^+)|^2 \,\D\sigma\D\eta
	\end{align*}
	and we will treat the two terms separately. To estimate the first integral, one introduces polar coordinates such that $\frac{\eta}{|\eta|}\cdot\sigma = \cos\theta$ and thus, since 
	\begin{align*}
		|\eta^+|^2 = |\eta|^2 \left(1+\frac{\eta}{|\eta|}\cdot\sigma\right) = |\eta|^2 \cos^2\frac{\theta}{2},
	\end{align*}
	obtains
	\begin{align*}
		I&\coloneq \frac{1}{2} \int_{\R^d} \int_{\S^{d-1}} b\left(\frac{\eta}{|\eta|}\cdot\sigma\right) \, \left(1-\frac{|\eta^+|^2}{|\eta|^2} \right) \left(\log\langle\eta\rangle_{\alpha}\right)^{\mu} \, G_{\Lambda}(\eta)^2 |\hat{f}(\eta)|^2 \,\D\sigma\D\eta \\
		&= \frac{1}{2} |\S^{d-2}| \int_{0}^{\frac{\pi}{2}} \sin^{d-2}\theta \, b(\cos\theta) \, \sin^{2}\tfrac{\theta}{2} \,\D\theta \int_{\R^d}\left(\log\langle\eta\rangle_{\alpha}\right)^{\mu} \, G_{\Lambda}(\eta)^2 |\hat{f}(\eta)|^2 \,\D\eta \\
		&\leq  \frac{1}{2} |\S^{d-2}| \int_{0}^{\frac{\pi}{2}} \sin^{d}\theta \, b(\cos\theta) \, \D\theta \, \left\| \left(\log\langle D_v \rangle_{\alpha}\right)^{\frac{\mu}{2}} G_{\Lambda} f\right\|_{L^2}^2.
	\end{align*}
	Notice that the $\theta$ integral is finite due to the assumptions on the angular collision kernel. This is another instance where cancellation effects play an important role in controlling the singularity for grazing collisions. 
	
	It remains to bound the second integral, and we will do this after a change of variables $\eta \to \eta^+$. This change of variables is well-known to the experts, see, for example, \cite{ADVW00,MUXY09}. We give some details for the convenience of the reader. 
	
	Observe that $\frac{\eta^+\cdot\sigma}{|\eta^+|}= \frac{|\eta^+|}{|\eta|}$ and $\frac{\eta\cdot\sigma}{|\eta|} = 2\left(\frac{\eta^+\cdot\sigma}{|\eta^+|}\right)^2 - 1$, and by Sylvester's determinant theorem, one has
	\begin{align*}
		\left|\frac{\partial\eta^+}{\partial\eta}\right| = \left| \frac{1}{2} \left(1+\frac{\eta}{|\eta|} \otimes \sigma \right)\right| = \frac{1}{2^d} \left(1+\frac{\eta}{|\eta|} \cdot \sigma \right) = \frac{1}{2^{d-1}} \left(\frac{\eta^+\cdot\sigma}{|\eta^+|}\right)^2 = \frac{1}{2^{d-1}} \frac{|\eta^+|^2}{|\eta|^2}.
	\end{align*} 
	Since $0\leq |\eta^-| \leq |\eta^+|$ and $|\eta|^2 = |\eta^-|^2 + |\eta^+|^2$, in particular $|\eta|^2 \leq 2 |\eta^+|^2$, it follows that $\left|\frac{\partial\eta^+}{\partial\eta}\right| \geq 2^{-d}$ and
	\begin{align*}
		\log\langle\eta\rangle_{\alpha} = \tfrac{1}{2} \log(\alpha+|\eta|^2) \leq \tfrac{1}{2} \log{2} + \tfrac{1}{2} \log(\alpha + |\eta^+|^2) = \tfrac{1}{2} \log{2} + \log\langle \eta^+\rangle_{\alpha}.
	\end{align*}
	For all $x,y\geq 0$ one has 
	\begin{align*}
		\begin{cases}
			(x+y)^{\mu} \leq 2^{\mu-1} (x^{\mu}+y^{\mu}) & \text{for } \mu \geq 1 \text{ by convexity, and} \\
			(x+y)^{\mu} \leq x^{\mu} + y^{\mu} &\text{for } \mu<1,
		\end{cases}
	\end{align*}
	where the second statement is a consequence of the fact that for $0<\mu<1$ the function $0\leq s \mapsto h(s) = (1+s)^{\mu} - s^{\mu}$ is monotone decreasing for all $s>0$ with $h(0) =1$. Therefore,
	\begin{align*}
		\left(\log\langle \eta \rangle_{\alpha} \right)^{\mu} \leq \max\{1, 2^{\mu-1}\} \left(2^{-\mu}(\log 2)^{\mu} + \left(\log\langle\eta^+\rangle_{\alpha}\right)^{\mu}\right).
	\end{align*}
	After those preparatory remarks, we can estimate
	\begin{align*}
		I^+&\coloneq \frac{1}{2} \int_{\R^d} \int_{\S^{d-1}} b\left(\frac{\eta}{|\eta|}\cdot\sigma\right) \, \left(1-\frac{|\eta^+|^2}{|\eta|^2} \right) \left(\log\langle\eta\rangle_{\alpha}\right)^{\mu} \,G_{\Lambda}(\eta^+)^2 |\hat{f}(\eta^+)|^2 \,\D\sigma\D\eta \\
		&= \frac{1}{2} \int_{\S^{d-1}} \int_{\R^d} b\left(2\left(\frac{\eta^+\cdot\sigma}{|\eta^+|}\right)^2 - 1\right) \, \left(1-\left(\frac{\eta^+\cdot\sigma}{|\eta^+|}\right)^2 \right) \left(\log\langle\eta\rangle_{\alpha}\right)^{\mu} \,G_{\Lambda}(\eta^+)^2 |\hat{f}(\eta^+)|^2 \,\left|\frac{\partial \eta^+}{\partial\eta}\right|^{-1}\,\D\eta^+\D\sigma\\
		&\leq 2^{d-1} \max\{1, 2^{\mu-1}\} \\ 
		&\quad \times \Bigg[2^{-\mu}(\log{2})^{\mu}  \int_{\R^d} \int_{\S^{d-1}} b\left(2\left(\frac{\eta^+\cdot\sigma}{|\eta^+|}\right)^2 - 1\right) \, \left(1-\left(\frac{\eta^+\cdot\sigma}{|\eta^+|}\right)^2 \right) \,G_{\Lambda}(\eta^+)^2 |\hat{f}(\eta^+)|^2 \,\D\sigma\D\eta^+ \\
		&\qquad \quad  + \int_{\R^d} \int_{\S^{d-1}} b\left(2\left(\frac{\eta^+\cdot\sigma}{|\eta^+|}\right)^2 - 1\right) \, \left(1-\left(\frac{\eta^+\cdot\sigma}{|\eta^+|}\right)^2 \right) \left(\log\langle\eta^+\rangle_{\alpha}\right)^{\mu} \,G_{\Lambda}(\eta^+)^2 |\hat{f}(\eta^+)|^2 \,\D\sigma\D\eta^+ \Bigg].
	\end{align*}
	Introducing new polar coordinates with pole $\frac{\eta^+}{|\eta^+|}$, such that $\cos\vartheta = \frac{\eta^+\cdot\sigma}{|\eta^+|} \geq \frac{1}{\sqrt{2}}$, i.e. $\vartheta\in [0, \frac{\pi}{4}]$, one then gets
	\begin{align*}
		I^+ \leq 2^{d-1} &\max\{1, 2^{\mu-1}\} \, |\S^{d-2}| \int_{0}^{\frac{\pi}{4}}\sin^{d}\vartheta b(\cos 2\vartheta)\,\D\vartheta \\ 
		  &\quad \times \left[ 2^{-\mu}(\log{2})^{\mu} \| G_{\Lambda}f\|_{L^2}^2  + \left\| \left(\log\langle D_v\rangle_{\alpha}\right)^{\frac{\mu}{2}} G_{\Lambda}f\right\|_{L^2}^2 \right].
	\end{align*}
	Estimating $\int_{0}^{\frac{\pi}{4}}\sin^{d}\vartheta b(\cos 2\vartheta)\,\D\vartheta \leq \int_0^{\frac{\pi}{2}} \sin^{d}\theta\, b(\cos\theta)\,\D\theta$ and combining the bounds on $I$ and $I^+$ proves inequality \eqref{eq:ce-estimate-2}.
\end{proof}

\section{Smoothing effect for $L^2$ initial data: Proof of Theorem \ref{thm:mainL2}}
\label{sec:mainL2}
We now have all the necessary pieces together to start the inductive proof of Theorem \ref{thm:mainL2} for initial data that are in addition square integrable.

The proof is based on gradually removing the cut-off $\Lambda$ in Fourier space, in such a way that the commutation error can be controlled, even though it contains fast growing terms. 
For fixed $T_0,\mu>0$ and $\alpha \geq \E^{\mu}$ we define
\begin{definition}[Induction Hypothesis $\Hyp_{\Lambda}(M)$.]
	Let $M\geq 0$ and $\Lambda>0$. Then for all $0\leq t \leq T_0$,
	\begin{align*}
		\sup_{|\xi|\leq \Lambda} G(t,\xi)^{\frac{\mu+1}{1+\log\alpha}} |\hat{f}(t,\xi)| \leq M.
	\end{align*}
\end{definition}
\begin{remark}
Recall that the Fourier multiplier $G$ also depends on $\beta>0$ and $\alpha \geq \E^{\mu}$ and we suppress this dependence here.

The induction step itself will be divided into two separate steps: 
\begin{enumerate}[label=\textbf{Step \arabic*}]
	\item $\Hyp_{\Lambda}(M) \implies \|G_{\sqrt{2}\Lambda}f\|_{L^2} \leq C$ via a \emph{Gronwall argument}.
	\item \emph{$L^2 \to L^{\infty}$ bound:} $\|G_{\sqrt{2}\Lambda}f\|_{L^2}\leq C \implies \Hyp_{\widetilde{\Lambda}}(M)$ for intermediate $\widetilde{\Lambda} = \frac{1+\sqrt{2}}{2}\Lambda$.
\end{enumerate}
Here it is essential that $M$ does not increase during the induction procedure. This can be accomplished by choosing $\beta$ small enough at very beginning.

\end{remark}

\begin{lemma}[Step 1]\label{lem:step1}
	Fix $T_0,\mu>0$ and $\alpha \geq \E^{\mu}$ and let $M\geq 0$ and $\Lambda>0$. Let further $C_{f_0}, \widetilde{C}_{f_0}$ and $c_{b,d}$ be the constants from Corollary \ref{cor:gronwallbound} and Lemma \ref{lem:ce}, respectively. If 
	\begin{align}\label{eq:step1beta}
		0<\beta \leq \beta_0(\alpha) \coloneq  \frac{C_{f_0}}{(\log(\E+\alpha))^{\mu+1}} \frac{\log\alpha}{\log\alpha+ 2 T_0(\mu+1)c_{b,d}M},
	\end{align}
	then for any weak solution of the Cauchy problem \eqref{eq:cauchyproblem} with initial datum $f_0\geq 0$, $f_0\in L^1_2\cap L\log L$,
	\begin{align*}
		\Hyp_{\Lambda}(M) \implies \|G_{\sqrt{2}\Lambda} f\|_{L^2(\R^d)} \leq \|1_{\sqrt{2}\Lambda}(D_v)f_0\|_{L^2(\R^d)}\, \E^{T_0 A_{f_0}(\alpha)},
	\end{align*}
	where $A_{f_0}(\alpha) \coloneq  \widetilde{C}_{f_0} +\frac{C_{f_0}\log\alpha}{2\left(\log(\E+\alpha)\right)^{\mu+1}}$ depends on $f_0$ only through $\|f_0\|_{L^1}$, $\|f_0\|_{L^1_2}$ and $\|f_0\|_{L\log L}$. 
\end{lemma}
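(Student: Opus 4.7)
Apply Corollary \ref{cor:gronwallbound} with $\Lambda$ replaced by $\sqrt{2}\Lambda$, then combine with the commutation error bound \eqref{eq:ce-estimate} and Lemma \ref{lem:ce}, both with the same replacement. The pivotal geometric fact is that, thanks to the support restriction on $b$, one has $|\eta^-|^2 \leq \tfrac{1}{2}|\eta|^2$, so on the support of $G_{\sqrt{2}\Lambda}(\eta)$ (where $|\eta|\leq\sqrt{2}\Lambda$) the vector $\eta^-$ satisfies $|\eta^-|\leq\Lambda$. Hence the induction hypothesis $\Hyp_\Lambda(M)$ applies throughout the integration region and gives $G(\tau,\eta^-)^{(\mu+1)/(1+\log\alpha)}|\hat f(\tau,\eta^-)|\leq M$.

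Pulling this pointwise bound out of \eqref{eq:ce-estimate} and invoking Lemma \ref{lem:ce} produces
\begin{align*}
\left|\langle G_{\sqrt{2}\Lambda}Q(f,f) - Q(f,G_{\sqrt{2}\Lambda}f),\, G_{\sqrt{2}\Lambda}f\rangle\right| \leq \beta\tau(\mu+1)M c_{b,d}\Bigl(\|G_{\sqrt{2}\Lambda}f\|_{L^2}^2 + \bigl\|(\log\langle D_v\rangle_\alpha)^{\mu/2}G_{\sqrt{2}\Lambda}f\bigr\|_{L^2}^2\Bigr).
\end{align*}
Since the coercivity term in \eqref{eq:aprioribound} carries the stronger weight $(\log\langle D_v\rangle_\alpha)^{(\mu+1)/2}$, the weights must be reconciled; the trivial estimate $\log\langle\eta\rangle_\alpha\geq\tfrac{1}{2}\log\alpha$ (valid since $\langle\eta\rangle_\alpha\geq\sqrt\alpha$) yields $(\log\langle\eta\rangle_\alpha)^{\mu} \leq \frac{2}{\log\alpha}(\log\langle\eta\rangle_\alpha)^{\mu+1}$, so the weighted term on the right can be absorbed into the coercivity term.

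After absorption, the net coefficient of $\|(\log\langle D_v\rangle_\alpha)^{(\mu+1)/2}G_{\sqrt{2}\Lambda}f\|_{L^2}^2$ in \eqref{eq:aprioribound} is bounded by $2\bigl(\beta(1 + \tfrac{2T_0(\mu+1)M c_{b,d}}{\log\alpha}) - \tfrac{C_{f_0}}{(\log(\E+\alpha))^{\mu+1}}\bigr)$, and the hypothesis \eqref{eq:step1beta} is calibrated precisely so that this quantity is nonpositive. Dropping it leaves the Gronwall-type inequality
\begin{align*}
\|G_{\sqrt{2}\Lambda}f(t,\cdot)\|_{L^2}^2 \leq \|\1_{\sqrt{2}\Lambda}(D_v)f_0\|_{L^2}^2 + \int_0^t \bigl(2\widetilde C_{f_0} + 2\beta\tau(\mu+1)M c_{b,d}\bigr)\|G_{\sqrt{2}\Lambda}f(\tau,\cdot)\|_{L^2}^2\,\D\tau.
\end{align*}
Using \eqref{eq:step1beta} a second time to bound $2\beta T_0(\mu+1)M c_{b,d}\leq \frac{C_{f_0}\log\alpha}{(\log(\E+\alpha))^{\mu+1}}$ identifies the Gronwall coefficient with $2A_{f_0}(\alpha)$; a standard application of Gronwall's lemma followed by taking square roots (with $t\leq T_0$) yields the claimed estimate.

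The main obstacle is the weight mismatch between the $(\log\langle D_v\rangle_\alpha)^{\mu/2}$ factor produced by the commutation error (via Lemma \ref{lem:ce}) and the higher power $(\mu+1)/2$ furnished by coercivity. This gap is bridged by using $\alpha$ itself as a lower bound on $\langle\eta\rangle_\alpha$, which extracts a small factor $\tfrac{1}{\log\alpha}$; the bound \eqref{eq:step1beta} on $\beta$ must then simultaneously ensure that (i) the absorbed commutation error does not overwhelm coercivity and (ii) the resulting Gronwall coefficient stays finite, and this double duty dictates its precise form.
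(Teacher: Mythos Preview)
Your proof is correct and follows essentially the same route as the paper: use $|\eta^-|\le |\eta|/\sqrt{2}$ to invoke $\Hyp_\Lambda(M)$ on the support of $G_{\sqrt{2}\Lambda}$, feed the resulting commutation bound together with Lemma~\ref{lem:ce} into Corollary~\ref{cor:gronwallbound}, absorb the weighted term via the choice \eqref{eq:step1beta}, and close with Gronwall. The only cosmetic difference is in reconciling the weight mismatch: the paper factors out $(\log\langle\eta\rangle_\alpha)^{\mu}$ and checks the pointwise inequality $(\beta - C)\log\langle\eta\rangle_\alpha + \beta B \le 0$ directly, whereas you first convert $(\log\langle\eta\rangle_\alpha)^{\mu}\le \tfrac{2}{\log\alpha}(\log\langle\eta\rangle_\alpha)^{\mu+1}$ and then compare coefficients---these are equivalent manipulations yielding the identical threshold $\beta_0(\alpha)$.
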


\begin{proof}
	Assume $\Hyp_{\Lambda}(M)$ is true. Since $|\eta^-|=|\eta| \sin\tfrac{\theta}{2} \leq \frac{|\eta|}{\sqrt{2}}$ by the assumption on the angular cross-section, the hypothesis implies 
	\begin{align*}
		\sup_{|\eta|\leq \sqrt{2}\Lambda} G(\eta^-)^{\frac{\mu+1}{1+\log\alpha}} |\hat{f}(\eta^-)| \leq M.
	\end{align*}
	With this uniform estimate at hand, we can bound the commutation error by
	\begin{align*}
		|\langle &G_{\Lambda}Q(f,f) - Q(f,G_{\Lambda}f), G_{\Lambda}f\rangle| \\
		&\leq 2 \beta t (\mu+1) M \int_{\R^d} \int_{\S^{d-1}} b\left(\frac{\eta}{|\eta|}\cdot\sigma\right) \, \left(1-\frac{|\eta^+|^2}{|\eta|^2} \right) \left(\log\langle\eta\rangle_{\alpha}\right)^{\mu} \\
		&\hskip15em \times  G_{\sqrt{2}\Lambda}(\eta^+) |\hat{f}(\eta^+)| \, G_{\sqrt{2}\Lambda}(\eta) |\hat{f}(\eta)| \,\D\sigma\D\eta,
	\end{align*}
	see equation \eqref{eq:ce-estimate}. By Lemma \ref{lem:ce}, this can be further bounded by
	\begin{align*}
		|\langle G_{\Lambda}Q(f,f) - Q(f,G_{\Lambda}f), G_{\Lambda}f\rangle| \leq \beta T_0 (\mu+1) c_{b,d} M \left(\|G_{\sqrt{2}\Lambda} f\|_{L^2}^2 + \left\| \left(\log\langle D_v\rangle_{\alpha}\right)^{\frac{\mu}{2}} G_{\sqrt{2}\Lambda} f\right\|_{L^2}^2 \right)
	\end{align*}
	for all $0\leq t\leq T_0$. Thus, the a priori bound from Corollary \ref{cor:gronwallbound} yields
	\begin{align}\label{eq:ce-gronwall}
	\begin{split}
	\|G_{\sqrt{2}\Lambda}f\|_{L^2}^2 &\leq \|\1_{\sqrt{2}\Lambda}(D_v) f_0\|_{L^2}^2 + 2 \left(\widetilde{C}_{f_0} + \beta T_0 (\mu+1) c_{b,d} M \right) \int_0^t \|G_{\sqrt{2}\Lambda} f\|_{L^2}^2\,\D\tau \\
		&\quad +2 \int_0^t \Bigg( \beta \left\|\left(\log\langle D_v\rangle_{\alpha}\right)^{\frac{\mu+1}{2}} G_{\sqrt{2}\Lambda} f\right\|_{L^2}^2 + \beta T_0 (\mu+1) c_{b,d} M \left\|\left(\log\langle D_v\rangle_{\alpha}\right)^{\frac{\mu}{2}} G_{\sqrt{2}\Lambda} f\right\|_{L^2}^2 \\
		&\hskip7em - \frac{C_{f_0}}{(\log(\E+\alpha))^{\mu+1}} \left\|\left(\log\langle D_v\rangle_{\alpha}\right)^{\frac{\mu+1}{2}} G_{\sqrt{2}\Lambda} f\right\|_{L^2}^2 \Bigg)\,\D\tau
	\end{split}
	\end{align}
	Choosing $\beta \leq \beta_0(\alpha)$ as defined in \eqref{eq:step1beta} ensures that the integrand in the last term on the right hand side of \eqref{eq:ce-gronwall} is negative. Indeed, setting $B= T_0(\mu+1)c_{b,d}M$ and $C=\frac{C_{f_0}}{(\log(\E+\alpha))^{\mu+1}}$, so that $\beta \leq \frac{C\log\alpha}{\log\alpha+2B}$, one sees that 
	\begin{align*}
		\beta \log\langle\eta\rangle_{\alpha} + \beta B - C \log\langle\eta\rangle_{\alpha} &\leq - \frac{2CB}{\log\alpha + 2B} \log\langle\eta\rangle_{\alpha} + \frac{CB\log\alpha}{\log\alpha+2B} \\&= \frac{CB \left(\log\alpha - \log(\alpha+|\eta|^2)\right)}{\log\alpha + 2B} \leq 0,
	\end{align*}
	and further, since $\log\alpha \geq \mu > 0$,
	\begin{align*}
		\beta B \leq \frac{CB\log\alpha}{\log\alpha+2B} = \frac{C\log\alpha}{2} \frac{2B}{\log\alpha+2B} \leq \frac{C\log\alpha}{2}.
	\end{align*}
	It follows that 
	\begin{align*}
		\|G_{\sqrt{2}\Lambda}f\|_{L^2}^2 &\leq \|\1_{\sqrt{2}\Lambda}(D_v) f_0\|_{L^2}^2 + 2 \left(\widetilde{C}_{f_0} + \beta T_0 (\mu+1) c_{b,d} M \right) \int_0^t \|G_{\sqrt{2}\Lambda} f\|_{L^2}^2\,\D\tau \\
		&\leq \|\1_{\sqrt{2}\Lambda}(D_v) f_0\|_{L^2}^2 + 2 A_{f_0}(\alpha) \int_0^t \|G_{\sqrt{2}\Lambda} f\|_{L^2}^2\,\D\tau.
	\end{align*}
	Now Gronwall's lemma implies 
	\begin{align*}
		\|G_{\sqrt{2}\Lambda}f\|_{L^2}^2 &\leq \|\1_{\sqrt{2}\Lambda}(D_v) f_0\|_{L^2}^2 \, \E^{2 A_{f_0}(\alpha) t} \leq \|\1_{\sqrt{2}\Lambda}(D_v) f_0\|_{L^2}^2 \, \E^{2 A_{f_0}(\alpha) T_0}.
	\end{align*}
\end{proof}

\begin{lemma}[Step 2]\label{lem:step2}
	Let $\beta, \mu>0$, $T_0 >0$, and 
	\begin{align*}
		\Lambda \geq \Lambda_0\coloneq  \frac{2\sqrt{d}}{\sqrt{2}-1}.
	\end{align*} 
	If there exist finite constants $B_1, B_2\geq 0$ such that for all $0\leq t \leq T_0$
	\begin{align*}
		\|f(t,\cdot)\|_{L^1_1(\R^d)} \leq B_1, \quad \text{and} \quad \|(G_{\sqrt{2}\Lambda} f)(t, \cdot)\|_{L^2(\R^d)} \leq B_2,
	\end{align*}
	then there exists a constant $K$ depending only on the dimension $d$ and the bounds $B_1, B_2$, such that for all $|\eta|\leq \widetilde{\Lambda}\coloneq  \frac{1+\sqrt{2}}{2} \Lambda$ and $t\in[0, T_0]$
	\begin{align*}
		|\hat{f}(t, \eta)| \leq K\, G(t,\eta)^{-\frac{2}{d+2}}.
	\end{align*}
\end{lemma}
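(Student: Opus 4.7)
The plan is to apply the ``impossible'' $L^2$-to-$L^\infty$ bound of Lemma \ref{lem:impossible-embedding} directly to $h(\eta) = \hat{f}(t,\eta)$. By Remark \ref{rem:impossible-embedding}, the hypothesis $\|f(t,\cdot)\|_{L^1_1} \leq B_1$ guarantees that $\hat{f}(t,\cdot) \in \mathcal{C}^1_b(\R^d)$ with $\|\hat{f}(t,\cdot)\|_{L^{\infty}} \leq B_1$ and $\|\nabla \hat{f}(t,\cdot)\|_{L^\infty} \leq 2\pi B_1$, so the constant $L$ produced by Lemma \ref{lem:impossible-embedding} depends only on $d$ and $B_1$. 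The remaining task is to estimate $\|\hat f(t,\cdot)\|_{L^2(Q_\eta)}$ by $G(t,\eta)^{-1} B_2$, which, raised to the power $\tfrac{2}{d+2}$, produces exactly the claimed decay.

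The crux is a simple geometric observation about the cube $Q_\eta$. Because $Q_\eta$ is oriented away from the origin, every $y\in Q_\eta$ satisfies $\eta\cdot(y-\eta)\geq 0$, and hence
\begin{equation*}
|y|^2 = |\eta|^2 + 2\eta\cdot(y-\eta) + |y-\eta|^2 \geq |\eta|^2.
\end{equation*}
Since $G(t,\xi) = \E^{\beta t (\log\langle\xi\rangle_{\alpha})^{\mu+1}}$ is a radial, increasing function of $|\xi|$, we get $G(t,y)\geq G(t,\eta)$ for all $y\in Q_\eta$. Therefore
\begin{equation*}
\int_{Q_\eta}|\hat f(t,y)|^2\,\D y \;\leq\; G(t,\eta)^{-2}\int_{Q_\eta} G(t,y)^2|\hat f(t,y)|^2\,\D y.
\end{equation*}

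To identify the right-hand side with (a piece of) $\|G_{\sqrt{2}\Lambda}f(t,\cdot)\|_{L^2}^2\leq B_2^2$, we must verify the containment $Q_\eta\subseteq\{|\xi|\leq \sqrt{2}\Lambda\}$ whenever $|\eta|\leq\widetilde{\Lambda}$. Any $y\in Q_\eta$ obeys $|y|\leq |\eta|+\sqrt{d}\leq \widetilde{\Lambda}+\sqrt{d}$, so this reduces to the inequality $\tfrac{1+\sqrt{2}}{2}\Lambda+\sqrt{d}\leq\sqrt{2}\Lambda$, i.e.\ $\Lambda\geq \tfrac{2\sqrt d}{\sqrt 2-1}=\Lambda_0$. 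This is the only place where the precise value of $\Lambda_0$ enters, and it is the sole obstacle to the whole argument: the gap between $\widetilde{\Lambda}$ and $\sqrt 2 \Lambda$ has been calibrated just large enough to swallow the diameter $\sqrt d$ of the cube.

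Combining the two estimates with Lemma \ref{lem:impossible-embedding} yields, for all $|\eta|\leq \widetilde{\Lambda}$ and $t\in[0,T_0]$,
\begin{equation*}
|\hat{f}(t,\eta)| \;\leq\; L\,\Bigl(\int_{Q_\eta}|\hat{f}(t,y)|^2\,\D y\Bigr)^{\!\!1/(d+2)} \;\leq\; L\,B_2^{2/(d+2)}\,G(t,\eta)^{-2/(d+2)},
\end{equation*}
and setting $K \coloneq L\,B_2^{2/(d+2)}$ finishes the proof. No dynamics of $f$ are used here beyond the two standing bounds $B_1,B_2$; the lemma is a purely harmonic-analytic consequence of them together with the monotonicity of the Fourier weight $G(t,\cdot)$.
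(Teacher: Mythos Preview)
Your proof is correct and follows essentially the same approach as the paper's: apply Lemma~\ref{lem:impossible-embedding} to $\hat f(t,\cdot)$, use the orientation of $Q_\eta$ together with the radial monotonicity of $G$ to pull out $G(t,\eta)^{-2}$, and invoke $\Lambda\geq\Lambda_0$ to ensure $Q_\eta\subset\{|\xi|\leq\sqrt{2}\Lambda\}$ so that the remaining integral is bounded by $B_2^2$. Your explicit verification that $|y|\geq|\eta|$ on $Q_\eta$ and the arithmetic check $\widetilde{\Lambda}+\sqrt{d}\leq\sqrt{2}\Lambda\iff\Lambda\geq\Lambda_0$ make the geometric steps a bit more transparent than in the paper, but the argument is the same.
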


\begin{proof}
	By Remark \ref{rem:impossible-embedding} $f$ satisfies the conditions of Lemma \ref{lem:impossible-embedding} with $\|\nabla\hat{f}\|_{L^{\infty}(\R^d)} \leq 2\pi B_1$, uniformly in $t\in[0,T_0]$. Obviously, also $\|\hat{f}\|_{L^{\infty}(\R^d)} \leq \|f\|_{L^1(\R^d)} \leq B_1$. It follows that for any $\eta\in\R^d$
	\begin{align*}
		|\hat{f}(\eta)| \leq \left(2\pi (d+2) B_1\right)^{\frac{d}{d+2}} \left( \int_{Q_{\eta}}   |\hat{f}|^2\,\D\eta \right)^{\frac{1}{d+2}}.
	\end{align*}
	where $Q_{\eta}$ is a unit cube with one corner at $\eta$, such that $\eta\cdot(\zeta-\eta) \geq 0$ for all $\zeta \in Q_{\eta}$. Since its diameter is $\sqrt{d}$, the condition $\Lambda\geq \Lambda_0$ and the choice of $\widetilde{\Lambda}$ guarantee that for $|\eta|\leq \widetilde{\Lambda}$ the cube $Q_{\eta}$ always stays inside a ball around the origin with radius $\sqrt{2}\Lambda$. 
	By the orientation of $Q_{\eta}$ and since the Fourier weight $G$ is a radial and increasing function in $\eta$, we can further estimate
	\begin{align*}
		|\hat{f}(\eta)| &\leq \left(2\pi (d+2) B_1\right)^{\frac{d}{d+2}} \, G(\eta)^{-\frac{2}{d+2}} \left( \int_{Q_{\eta}}   G(\eta)^2|\hat{f}|^2\,\D\eta \right)^{\frac{1}{d+2}} \\ 
		&\leq \left(2\pi (d+2) B_1\right)^{\frac{d}{d+2}} \, G(\eta)^{-\frac{2}{d+2}} \left\|G_{\sqrt{2}\Lambda}f\right\|_{L^2(\R^d)}^{\frac{2}{d+2}} \\
		&\leq \left(2\pi (d+2) B_1 B_2^{\frac{2}{d}}\right)^{\frac{d}{d+2}} \, G(\eta)^{-\frac{2}{d+2}}
	\end{align*}
	which is the claimed inequality with $K= \left(2\pi (d+2) B_1 B_2^{\frac{2}{d}}\right)^{\frac{d}{d+2}}$.
	\end{proof}

\begin{proof}[Proof of Theorem \ref{thm:main}]
	Let $\mu>0$ and $T_0>0$ be fixed. Set $\alpha_* = \E^{\frac{d}{2} + \frac{d+2}{2}\mu} \geq \E^{\mu}$, which is chosen in such a way that $\frac{\mu+1}{1+\log\alpha_*} = \frac{2}{d+2}$ and the function $s\mapsto \left(\log(\alpha_*+s)\right)^{\mu+1}$ is concave. 
	
	Choosing $\Lambda_0 = \frac{2\sqrt{d}}{\sqrt{2}-1}$ as in Lemma \ref{lem:step2}, we define the length scales for our induction by
	\begin{align*}
		\Lambda_N \coloneq  \frac{\Lambda_{N-1}+\sqrt{2}\Lambda_{N-1}}{2} = \frac{1+\sqrt{2}}{2} \Lambda_{N-1} = \left(\frac{1+\sqrt{2}}{2}\right)^N \Lambda_0, \quad N\in\N.
	\end{align*}
	By conservation of energy, we have 
	\begin{align*}
			\|f(t,\cdot)\|_{L^1_1} \leq \|f(t,\cdot)\|_{L^1_2} = \|f_0\|_{L^1_2} \eqcolon B_1 
	\end{align*}
	in view of Lemma \ref{lem:step2}. By Lemma \ref{lem:step1} a good (in particular uniform in $N\in\N$) choice for $B_2$ is 
	\begin{align*}
		B_2\coloneq  \|f_0\|_{L^2(\R^d)} \, \E^{T_0 A_{f_0}(\alpha_*)}.
	\end{align*}
	Define further 
	\begin{align*}
		M\coloneq  \max\left\{2 B_1 + 1, \left(2\pi (d+2) B_1 B_2^{\frac{2}{d}}\right)^{\frac{d}{d+2}} \right\},
	\end{align*}
	where the second expression is just the constant $K$ from Lemma \ref{lem:step2}.
	
	For the start of the induction, we need $\Hyp_{\Lambda_0}(M)$ to hold. Since
	\begin{align*}
		\sup_{t\in[0,T_0]} \sup_{|\eta|\leq \Lambda_0} G(\eta)^{\frac{\mu+1}{1+\log\alpha_*}} |\hat{f}(\eta)| \leq \E^{\frac{\mu+1}{1+\log\alpha_*} \beta T_0 \left(\frac{1}{2}\log\left(\alpha_* + \Lambda_0^2 \right)\right)^{\mu+1}} B_1,
	\end{align*}
	there exists $\widetilde{\beta}>0$ small enough, such that for the the above choice of $M$, $\Hyp_{\Lambda_0}(M)$ is true for all $0<\beta\leq \widetilde{\beta}$.
	
	For the induction step, assume that $\Hyp_{\Lambda_N}(M)$ is true. Setting
	\begin{align*}
		\beta = \min\{\beta_0(\alpha_*), \widetilde{\beta}\}
	\end{align*}  
	with $\beta_0(\alpha)$ from Lemma \ref{lem:step1}, all the assumptions of Lemma \ref{lem:step1} are fulfilled and it follows that 
	\begin{align*}
		\|G_{\sqrt{2}\Lambda_N} f\|_{L^2(\R^d)} \leq \|\1_{\sqrt{2}\Lambda_N}(D_v)f\|_{L^2(\R^d)} \, \E^{T_0 A_{f_0}(\alpha_*)} \leq B_2.
	\end{align*}
	Notice that the right hand side of this inequality \emph{does not depend on $M$}. Lemma \ref{lem:step2} now implies that for all $|\eta|\leq \widetilde{\Lambda_N} = \Lambda_{N+1}$ 
	\begin{align*}
		G(t,\eta)^{\frac{2}{d+2}} |\hat{f}(t,\eta)| \leq K \leq M
	\end{align*}
	for all $t\in[0,T_0]$. By the choice of $\alpha_*$ this means that $\Hyp_{\Lambda_{N+1}}(M)$ is true. 
	
	By induction, it follows that $\Hyp_{\Lambda_N}(M)$ holds for all $N\in\N$, in particular
	\begin{align*}
		\sup_{t\in[0,T_0]} \sup_{\eta\in\R^d} \E^{\beta t \left(\log\langle \eta \rangle_{\alpha_*}\right)^{\mu+1}} |\hat{f}(\eta)| \leq M.
	\end{align*}
	Another application of Lemma \ref{lem:step1} implies 
	\begin{align*}
		\|G_{\sqrt{2}\Lambda_N} f\|_{L^2(\R^d)} \leq \|f_0\|_{L^2(\R^d)}\,\E^{T_0 A_{f_0}(\alpha_*)} \quad \text{for all } N\in\N. 
	\end{align*}
	Passing to the limit $N\to\infty$, it follows that $\|G f\|_{L^2(\R^d)}\leq B$, that is, 
	\begin{align*}
		\E^{\beta t \left(\log\langle D_v \rangle_{\alpha_*}\right)^{\mu+1}} f(t, \cdot) \in L^2(\R^d).
	\end{align*}
\end{proof}

\section{Smoothing effect for arbitrary physical initial data}

\begin{proof}[Proof of Theorem \ref{thm:main}]
Let $T>0$ be arbitrary (but finite). By the already known $H^{\infty}$ smoothing property of the homogeneous Boltzmann equation for Maxwellian molecules with Debye-Yukawa type interaction, see \cite{MUXY09}, for any $0<t_0<T$ one has 
\begin{align*}
	f \in L^{\infty}([t_0, T]; H^{\infty}(\R^d)),
\end{align*}
in particular $f(t,\cdot)\in L^2(\R^d)$ for all $t_0\leq t \leq T$. 
Using $f(t_0,\cdot)\in L^1_2\cap L\log L \cap L^2$ as new initial datum, Theorem \ref{thm:mainL2} implies that there exist $\beta,M>0$ such that 
\begin{align*}
	\E^{\beta t \left(\log\langle D_v \rangle_{\alpha_*}\right)^{\mu+1}} f(t, \cdot) \in L^2(\R^d)\\ \intertext{and}
	\|\E^{\beta t \left(\log\langle \cdot \rangle_{\alpha_*}\right)^{\mu+1}} \hat{f}(t, \cdot)\|_{L^{\infty}(\R^d)} \leq M
\end{align*}
for all $t\in[t_0,T]$. By the characterisation of the spaces $\mathcal{A}^{\mu}$ (see Appendix \ref{app:analyticity}), and since $t_0$ and $T$ are arbitrary, it follows that $f(t,\cdot)\in\mathcal{A}^{\mu}(\R^d)$ for all $t>0$.
\end{proof}

\begin{proof}[Sketch of the Proof of Corollary \ref{cor:main}] 
In the notation of \cite{DFT09}, basically, the only thing that needs to be checked is that the function $\psi_{\alpha}: [0,\infty) \to [0, \infty)$, $r\mapsto \psi_{\alpha}(r) \coloneq  (\log\sqrt{\alpha+r})^{\mu+1}$ satisfies 
\begin{enumerate}[label=(\roman*)]
	\item $\psi_{\alpha}(r) \to \infty$ for $r\to \infty$
	\item $\psi_{\alpha}(r)\leq r$ for $r$ large enough
	\item there exists $R\geq 1$ such that for all $0\leq \lambda \leq 1$
	\begin{align*}
		\psi_{\alpha}(\lambda^2|\eta|^2) \geq \lambda^2 \psi_{\alpha}(|\eta|^2) \quad \text{whenever} \quad \lambda |\eta| \geq R.
	\end{align*} 
\end{enumerate}
Property (iii) is fulfilled by any concave function $\psi$ with $\psi(0) \geq 0$. This clearly is the case for $\psi_{\alpha}$ if $\alpha\geq \E^{\mu}$, see Lemma \ref{lem:subadditivity}. 

So we take the $\alpha$ from Theorem \ref{thm:mainL2} and conclude propagation with Theorem 1.2 from \cite{DFT09}.
\end{proof}

\appendix

\section{Coercivity of the Boltzmann collision operator with Debye-Yukawa Potential}\label{app:coercivity}

Since we need to take care of the dependence of the constants within our inductive approach, we present a slightly modified version of the coercivity estimate first proved by \textsc{Morimoto} \etal \cite{MUXY09}, based upon the ideas of \textsc{Alexandre} \etal \cite{ADVW00}.
\begin{proposition}[Coercivity Estimate]\label{prop:subelliptic}
Let $g\geq 0$, $g\in L^1_1(\R^d) \cap L\log L(\R^d)$. Then there exists a positive constant $C_g$ depending only on the dimension $d$, the collision kernel $b$, $\|g\|_{L^1_1}$ and $\|g\|_{L\log L}$ and constants $C>0$, $R\geq \sqrt{\E}$, depending only on the dimension $d$ and on the collision kernel $b$, such that for all $\alpha\geq 0$ and all $0\leq f\in H^1(\R^d)$ one has
\begin{align*}
	-\langle Q(g,f),f \rangle \geq \frac{C_g}{\left(\log( \alpha + \E )\right)^{\mu+1}} \left\|\left(\log\langle D_v\rangle_{\alpha}\right)^{\frac{\mu+1}{2}} f \right\|_{L^2}^2 - \left(C_g \left(\log R\right)^{\mu+1} + C \|g\|_{L^1}\right) \|f\|_{L^2}^2.
\end{align*}
\end{proposition}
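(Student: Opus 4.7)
The plan is to adapt the Fourier-space argument of \textsc{Alexandre}, \textsc{Desvillettes}, \textsc{Villani} and \textsc{Wennberg} \cite{ADVW00}, as implemented for the Debye--Yukawa case by \textsc{Morimoto} \etal \cite{MUXY09}, but to carefully track the dependence on $\alpha$ that was absent in \cite{MUXY09}. The three essential ingredients are Bobylev's identity \eqref{eq:bobylev}, a quantitative non-degeneracy estimate for $\hat g$ supplied by the finite entropy assumption, and the logarithmic angular singularity \eqref{eq:singularity}.

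First I would use Bobylev's identity and Plancherel to rewrite
\[-\langle Q(g,f),f\rangle = \int_{\R^d}\int_{\S^{d-1}} b\bigl(\tfrac{\eta}{|\eta|}\cdot\sigma\bigr)\Bigl[\hat g(0)|\hat f(\eta)|^2 - \Re\bigl(\hat g(\eta^-)\hat f(\eta^+)\overline{\hat f(\eta)}\bigr)\Bigr]\,\D\sigma\,\D\eta,\]
and then split the bracket as $(\hat g(0)-\Re\hat g(\eta^-))|\hat f(\eta)|^2 + \Re\hat g(\eta^-)\bigl(|\hat f(\eta)|^2 - \hat f(\eta^+)\overline{\hat f(\eta)}\bigr)$. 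The first summand produces the non-negative Fourier multiplier $\Phi(\eta)\coloneq \int_{\S^{d-1}} b(\sigma\cdot\eta/|\eta|)(\hat g(0)-\Re\hat g(\eta^-))\,\D\sigma$ acting on $|\hat f|^2$. For the second summand I would use $|\hat g(\eta^-)|\leq \hat g(0)=\|g\|_{L^1}$ together with a Cauchy--Schwarz step, followed by the change of variables $\eta\mapsto\eta^+$ (whose Jacobian was recorded in the proof of Lemma \ref{lem:ce}) and the $\sigma\mapsto-\sigma$ symmetry, to absorb it into a cancellation-type error of the form $C\|g\|_{L^1}\|f\|_{L^2}^2$, where $C$ depends only on $d$ and $\int_0^{\pi/2}\sin^{d}\theta\,b(\cos\theta)\,\D\theta$.

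The crux is a pointwise lower bound $\Phi(\eta)\geq C_g(\log\langle\eta\rangle)^{\mu+1}$ for $|\eta|\geq R$, with $R\geq\sqrt\E$ depending only on $d$ and $b$. Parametrising by $\cos\theta = \sigma\cdot\eta/|\eta|$ and using $|\eta^-|=|\eta|\sin(\theta/2)$, I would split the angular integral at $\theta_0\coloneq R/|\eta|$. On $[\theta_0,\pi/2]$ one has $|\eta^-|\geq R/\pi$, and the Bochner formula $\hat g(0)-\Re\hat g(\xi) = \int g(v)(1-\cos(2\pi v\cdot\xi))\,\D v$ combined with a quantitative concentration argument (using the uniform integrability of $g$ furnished by the $L\log L$ hypothesis via de la Vallée Poussin, together with the $L^1_1$ first-moment bound to prevent escape to infinity) yields a constant $\delta_g>0$, depending only on $\|g\|_{L^1_1}$ and $\|g\|_{L\log L}$, such that $\hat g(0)-\Re\hat g(\xi)\geq\delta_g$ for all $|\xi|\geq R/\pi$. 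Inserting this into $\Phi$ and invoking \eqref{eq:singularity} together with the elementary antiderivative $\int_{\theta_0}^{\pi/2}\theta^{-1}(\log\theta^{-1})^{\mu}\,\D\theta = (\mu+1)^{-1}\bigl[(\log(|\eta|/R))^{\mu+1}-(\log(2/\pi))^{\mu+1}\bigr]$ gives the claimed lower bound after enlarging $R$ to swallow the subtracted constant.

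Finally I would convert $(\log\langle\eta\rangle)^{\mu+1}$ into $(\log\langle\eta\rangle_\alpha)^{\mu+1}/(\log(\alpha+\E))^{\mu+1}$: for $|\eta|\geq R\geq\sqrt\E$ one has $\log\langle\eta\rangle\geq\tfrac12$, and $\log\langle\eta\rangle_\alpha \leq \log\langle\eta\rangle + \tfrac12\log(\alpha+1)\leq 2\log(\alpha+\E)\log\langle\eta\rangle$, so raising to the $(\mu+1)$-th power produces the desired conversion with a $2^{\mu+1}$ factor that is absorbed into $C_g$. On the low-frequency region $|\eta|<R$ I discard the non-negative multiplier $\Phi$ and pay the price $C_g(\log R)^{\mu+1}\|f\|_{L^2}^2$ by subtracting and adding back the coercive norm restricted to $\{|\eta|<R\}$. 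The hardest step will be the quantitative non-degeneracy estimate: producing $\delta_g$ that depends only on $\|g\|_{L^1_1}$ and $\|g\|_{L\log L}$ (rather than on a modulus of continuity or on support information of $g$) is precisely the delicate point in \cite{MUXY09} and must be reproduced without loss of quantitative control here.
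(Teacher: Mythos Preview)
Your overall strategy follows the same ADVW/MUXY route as the paper, and the handling of the coercive multiplier $\Phi(\eta)$ (the non-degeneracy of $\hat g$ away from zero, the use of the logarithmic singularity, and the $\alpha$-conversion at the end) is essentially correct and close to what the paper does in Lemma~\ref{lem:subelliptic}. The differences there are cosmetic: the paper quotes the ADVW inequality $\hat g(0)-|\hat g(\xi)|\geq \widetilde C_g(|\xi|^2\wedge 1)$ directly rather than re-deriving it via de la Vall\'ee Poussin, and uses the slightly cleaner comparison $\log|\eta|\geq (\log\langle\eta\rangle_\alpha)/\log(\alpha+\E)$ for $|\eta|\ge\sqrt\E$. (A small glitch: your antiderivative $\int_{\theta_0}^{\pi/2}\theta^{-1}(\log\theta^{-1})^\mu\,\D\theta$ is ill-defined because $\log\theta^{-1}<0$ near $\pi/2$; you must truncate the upper limit at some $\theta_1<1$ where the asymptotic \eqref{eq:singularity} is valid, exactly as the paper does.)

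The genuine gap is your treatment of the \emph{second summand}. First, the split is algebraically off: $\hat g(0)|\hat f(\eta)|^2-\Re\bigl(\hat g(\eta^-)\hat f(\eta^+)\overline{\hat f(\eta)}\bigr)$ is \emph{not} equal to $(\hat g(0)-\Re\hat g(\eta^-))|\hat f(\eta)|^2+\Re\hat g(\eta^-)\bigl(|\hat f(\eta)|^2-\hat f(\eta^+)\overline{\hat f(\eta)}\bigr)$ unless $\Im\hat g\equiv 0$; you drop the cross term $\Im\hat g(\eta^-)\,\Im(\hat f(\eta^+)\overline{\hat f(\eta)})$. More seriously, even once the algebra is repaired, the error term carries the \emph{variable} coefficient $\hat g(\eta^-)$ rather than the constant $\hat g(0)$. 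A ``Cauchy--Schwarz step'' at this point destroys the cancellation: one is left with $\int\!\!\int b\,|\hat f(\eta)|^2$ or $\int\!\!\int b\,|\hat f(\eta^+)|^2$, both of which diverge because $b$ is non-integrable, and the change of variables $\eta\mapsto\eta^+$ does not help once the difference structure is gone. The paper avoids this by a different decomposition: it writes the integrand as a $2\times 2$ Hermitian form and splits the matrix as
\[
\begin{pmatrix}2\hat g(0) & -\hat g(\eta^-)\\ -\overline{\hat g(\eta^-)} & 0\end{pmatrix}
=\begin{pmatrix}\hat g(0) & -\hat g(\eta^-)\\ -\overline{\hat g(\eta^-)} & \hat g(0)\end{pmatrix}
+\begin{pmatrix}\hat g(0) & 0\\ 0 & -\hat g(0)\end{pmatrix}.
\]
The first matrix is positive semi-definite with smallest eigenvalue $\hat g(0)-|\hat g(\eta^-)|$, which feeds directly into $\Phi$. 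The second matrix produces exactly $\tfrac12\hat g(0)\int\!\!\int b\,(|\hat f(\eta^+)|^2-|\hat f(\eta)|^2)$, i.e.\ a pure cancellation term with \emph{constant} coefficient, to which the change of variables $\eta\mapsto\eta^+$ applies cleanly and yields the bound $C\|g\|_{L^1}\|f\|_{L^2}^2$. Replacing your split by this one fixes the argument.
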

\begin{remark}
  Of course, the above lower bound holds for a much larger class of functions, essentially, $ \big\|\left(\log\langle D_v\rangle_{\alpha}\right)^{\frac{\mu+1}{2}} f \big\|_{L^2} $ should be finite. 	
\end{remark}

As a first step in the proof of Proposition \ref{prop:subelliptic}
\begin{lemma}\label{lem:subelliptic}
	Assume that the angular collision kernel $b$ satisfies \eqref{eq:singularity} and \eqref{eq:momentumtransfer} and let $g\geq 0$, $g\in L^1_1\cap L\log L$. Then there exists a constant $C_g'>0$, depending only on $b$, the dimension $d$, and $\|g\|_{L^1}$, $\|g\|_{L^1_1}$, and $\|g\|_{L\log L}$, as well as a constant $R\geq \sqrt{\E}$ depending only on $d$ and $b$, such that 
	\begin{align*}
		\int_{\S^{d-2}} b\left(\frac{\eta}{|\eta|}\cdot\sigma\right)\, &\left( \hat{g}(0) - |\hat{g}(\eta^-)| \right)\,\D\sigma \geq C_g' \left( \frac{\log\langle\eta\rangle_{\alpha}}{\log(\alpha+\E)} \right)^{\mu+1} \, \1_{\{|\eta|\geq R\}}.
	\end{align*}
\end{lemma}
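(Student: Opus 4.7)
The strategy is to convert the Fourier-space lower bound into a two-point spatial correlation estimate on $g$ and exploit the logarithmic grazing singularity of $b$. Since $\hat g(0)=\|g\|_{L^1}\geq|\hat g(\eta^-)|$ and the imaginary part of $|\hat g(\eta^-)|^2 = \iint g(v)g(v_\ast)\E^{-2\pi\I(v-v_\ast)\cdot\eta^-}\,\D v\D v_\ast$ vanishes by antisymmetry in $v\leftrightarrow v_\ast$, the elementary inequality $\hat g(0)-|\hat g(\eta^-)|\geq(\hat g(0)^2-|\hat g(\eta^-)|^2)/(2\|g\|_{L^1})$ combined with Fubini reduces the claim to finding a lower bound on $\iint g(v)g(v_\ast)\,F(\eta, v-v_\ast)\,\D v\D v_\ast$, where
\begin{align*}
F(\eta, w) \coloneq \int_{\S^{d-1}} b\bigl(\tfrac{\eta}{|\eta|}\cdot\sigma\bigr) \bigl(1-\cos(2\pi w\cdot\eta^-)\bigr)\,\D\sigma.
\end{align*}

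The heart of the proof is a pointwise estimate $F(\eta, w)\geq c_b(\log(|\eta||w|))^{\mu+1}$ whenever $|\eta||w|\geq R'$, for constants $c_b>0$, $R'\geq 1$ depending only on $b$ and $d$. Writing $\sigma$ in spherical coordinates with pole along $\hat\eta=\eta/|\eta|$ one has $|\eta^-|=|\eta|\sin(\theta/2)$, and for $\theta$ above the threshold $\theta_1\sim 1/(|\eta||w|)$ the quantity $1-\cos(2\pi w\cdot\eta^-)$, after averaging over the equatorial angle $\phi\in\S^{d-2}$, is bounded below by a positive universal constant (via Bessel-function decay when $d\geq 3$, and by oscillatory cancellation in $d=2$ or in the degenerate case $w\parallel\eta$). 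Integrating the singular profile $\sin^{d-2}\theta\,b(\cos\theta)\asymp\kappa\theta^{-1}(\log\theta^{-1})^{\mu}$ from $\theta_1$ up to $\pi/2$ then produces the factor $(\log(|\eta||w|))^{\mu+1}/(\mu+1)$; the extra logarithmic power beyond the $\mu$-th present in the profile of $b$ arises precisely from the $\D\theta/\theta$-integration.

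To deploy this pointwise bound against $g\otimes g$, I invoke the classical non-concentration estimate: because $g\in L^1_1\cap L\log L$, there exist $\delta,c_\ast>0$ depending only on $d$, $\|g\|_{L^1}$, $\|g\|_{L^1_1}$, $\|g\|_{L\log L}$ with
\begin{align*}
\iint_{|v-v_\ast|\geq\delta}g(v)g(v_\ast)\,\D v\D v_\ast\geq c_\ast.
\end{align*}
The proof is standard: Markov's inequality on $\|g\|_{L^1_1}$ localises at least half the mass to a bounded ball, and Jensen's inequality applied to $s\log s$ prevents the mass from being supported in any $\delta$-ball. Choosing $R:=\max(\sqrt{\E}, R'/\delta)$, the set $\{|\eta|\geq R,\,|v-v_\ast|\geq\delta\}$ lies in the regime $|\eta||v-v_\ast|\geq R'$ in which the pointwise bound on $F$ applies, and one obtains
\begin{align*}
\int_{\S^{d-1}} b\bigl(\tfrac{\eta}{|\eta|}\cdot\sigma\bigr)\bigl(\hat g(0)-|\hat g(\eta^-)|\bigr)\,\D\sigma \geq \frac{c_\ast c_b}{2\|g\|_{L^1}}\bigl(\log(\delta|\eta|)\bigr)^{\mu+1}.
\end{align*}

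The $\alpha$-dependent normalisation is then recovered by an elementary calculation: from $\alpha+|\eta|^2\leq(\alpha+\E)(1+|\eta|^2/\E)$ one obtains $\log\langle\eta\rangle_\alpha\leq\tfrac12\log(\alpha+\E)+\tfrac12\log(1+|\eta|^2/\E)$, whence $(\log\langle\eta\rangle_\alpha/\log(\alpha+\E))^{\mu+1}\leq C_\mu(\log|\eta|)^{\mu+1}$ uniformly in $\alpha\geq 0$ once $|\eta|$ is bounded away from zero; enlarging $R$ if necessary, the factor $(\log(\delta|\eta|))^{\mu+1}$ absorbs $C_\mu$ and yields the lemma with $C_g'=c_\ast c_b/(2C_\mu\|g\|_{L^1})$. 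The main technical obstacle is the pointwise bound on $F$: the direction-dependence of $w$ relative to $\eta$ forces a small case analysis of the $\phi$-average, and the sharp $\mu+1$ exponent (versus the naive $\mu$ read off directly from the profile of $b$) emerges only after integrating the $\D\theta/\theta$-singularity across the full range of grazing angles $[\theta_1,\pi/2]$.
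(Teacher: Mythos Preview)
Your approach is sound but takes a genuinely different route from the paper. The paper invokes directly the pointwise Fourier lower bound $\hat g(0)-|\hat g(\xi)|\geq\widetilde C_g\,(|\xi|^2\wedge 1)$ due to \textsc{Alexandre, Desvillettes, Villani} and \textsc{Wennberg}, which already absorbs all the non-concentration information about $g$; since this bound is radial in $\xi=\eta^-$ and $|\eta^-|=|\eta|\sin(\theta/2)$, the $\sigma$-integral collapses to a one-dimensional integral of $\sin^{d-2}\theta\,b(\cos\theta)\,(|\eta|^2\sin^2(\theta/2)\wedge 1)$, and integrating the grazing profile $\kappa\,\theta^{-1}(\log\theta^{-1})^\mu$ from $\theta\sim 1/|\eta|$ upward produces the factor $(\log|\eta|)^{\mu+1}$ with no case distinction whatsoever. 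You instead expand $\hat g(0)^2-|\hat g(\eta^-)|^2$ as a two-point spatial correlation and seek a pointwise lower bound on $F(\eta,w)$: this effectively re-derives the ADVW estimate inside the lemma, at the price of confronting the direction of $w$ relative to $\eta$. In particular, your stated threshold $\theta_1\sim 1/(|\eta||w|)$ is correct only when $w$ has a non-degenerate component orthogonal to $\eta$; in the nearly-parallel regime the threshold is rather $\sim 1/\sqrt{|\eta||w|}$, and the $\phi$-averaged integrand then equals $1-\cos a(\theta)$ with $a(\theta)\sim|\eta||w|\theta^2$, which \emph{oscillates} in $\theta$ rather than staying bounded below pointwise, so a further averaging argument over $\theta$ is required (as you hint with ``oscillatory cancellation''). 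Both routes succeed, but the paper's decoupling of the $g$-dependent Fourier estimate from the purely $b$-dependent angular integration is what keeps its proof short.
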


\begin{remark} The constant $C_g$ (respectively $C_g'$) is an increasing function of $\|g\|_{L^1}$, $\|g\|_{L^1_1}^{-1}$ and $\|g\|_{L\log L}^{-1}$, see the proof of Lemma 3 in \cite{ADVW00}. In particular, if $g$ is a weak solution of the Cauchy problem \eqref{eq:cauchyproblem} with initial datum $g_0\in L^1_2(\R^d)\cap L\log L(\R^d)$, we have $\|g\|_{L^1}=\|g_0\|_{L^1}$, $\|g\|_{L^1_1} \leq \|g\|_{L^1_2} \leq \|g_0\|_{L^1_2}$ and $\|g\|_{L\log L} \leq \log2 \|g_0\|_{L^1} + H(g_0) + C_{\delta,d} \|g_0\|_{L^1_2}^{1-\delta}$, for small enough $\delta>0$. This implies $C_g \geq C_{g_0}$.
\end{remark}

Applying the remark to the constant $C_g'$ in Lemma \ref{lem:subelliptic}, we arrive at

\begin{corollary}\label{cor:subelliptic}
	Let $g$ be a weak solution of the Cauchy problem \eqref{eq:cauchyproblem} with initial datum $g_0\in L^1_2(\R^d)\cap L\log L(\R^d)$ and angular collision kernel $b$ satisfying \eqref{eq:singularity} and \eqref{eq:momentumtransfer}. Then the conclusion of Proposition \ref{prop:subelliptic} holds with $C_g$ and $\|g\|_{L^1}$ replaced by $C_{g_0}$ and $\|g_0\|_{L^1}$, i.e.
	\begin{align*}
	-\langle Q(g,f),f \rangle \geq \frac{C_{g_0}}{\left(\log( \alpha + \E )\right)^{\mu+1}} \left\|\left(\log\langle D_v\rangle_{\alpha}\right)^{\frac{\mu+1}{2}} f \right\|_{L^2}^2 - \left(C_{g_0} \left(\log R\right)^{\mu+1} + C \|g_0\|_{L^1}\right) \|f\|_{L^2}^2,
\end{align*}
uniformly in $t\geq 0$.
\end{corollary}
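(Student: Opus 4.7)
The plan is to apply Proposition \ref{prop:subelliptic} to $g(t,\cdot)$ pointwise in $t\geq 0$ and then absorb the time-dependence into constants depending only on the initial datum $g_0$. This reduces to two tasks: verifying the hypotheses of Proposition \ref{prop:subelliptic} at each time $t$, and bounding the $g$-dependent quantities $\|g\|_{L^1}$, $\|g\|_{L^1_1}^{-1}$ and $\|g\|_{L\log L}^{-1}$ uniformly in $t$ by analogous $g_0$-quantities. The first task is immediate since Definition \ref{def:weaksol} gives $g(t,\cdot)\geq 0$ and $g(t,\cdot)\in L^1_2(\R^d)\cap L\log L(\R^d)\subseteq L^1_1\cap L\log L$ for every $t$. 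For the second, conservation of mass and energy produce $\|g(t,\cdot)\|_{L^1}=\|g_0\|_{L^1}$ and $\|g(t,\cdot)\|_{L^1_1}\leq \|g(t,\cdot)\|_{L^1_2}=\|g_0\|_{L^1_2}$ for free.

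The only substantive step is a time-uniform bound on $\|g(t,\cdot)\|_{L\log L}$. I would use the elementary inequality $\log(1+g)\leq \log 2 + (\log g)_+$ together with the identity $g(\log g)_+ = g\log g + g(\log g)_-$ to write
\begin{align*}
\|g\|_{L\log L}\leq (\log 2)\,\|g\|_{L^1} + \mathcal{H}(g) + \int_{\R^d} g\,(\log g)_-\,\D v,
\end{align*}
and then control the negative-entropy integral by moments: a standard splitting according to whether $g(v)\leq \E^{-|v|^2}$ or not yields the crude bound $\int g(\log g)_-\,\D v\leq C_d + \|g\|_{L^1_2}$, or, with a slightly sharper Hölder argument, the $C_{\delta,d}\|g\|_{L^1_2}^{1-\delta}$ bound recorded in the remark between Lemma \ref{lem:subelliptic} and the corollary. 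Combined with the entropy dissipation $\mathcal{H}(g(t,\cdot))\leq \mathcal{H}(g_0)$ from Definition \ref{def:weaksol}(iii) and the conservation laws, this gives the desired uniform control of $\|g(t,\cdot)\|_{L\log L}$ in terms of $\|g_0\|_{L^1}$, $\|g_0\|_{L^1_2}$ and $\mathcal{H}(g_0)$.

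With all three quantities bounded uniformly in $t$ by $g_0$-quantities, the conclusion follows from the monotonicity observation of the preceding remark: since $C_g$ (and analogously $C_g'$ in Lemma \ref{lem:subelliptic}) is an increasing function of $\|g\|_{L^1}$, $\|g\|_{L^1_1}^{-1}$ and $\|g\|_{L\log L}^{-1}$, one may substitute $C_g\rightsquigarrow C_{g_0}$ and $\|g\|_{L^1}\rightsquigarrow\|g_0\|_{L^1}$ in the conclusion of Proposition \ref{prop:subelliptic}. The main obstacle, to the extent there is one, is tracing this monotone dependence through the proof of Lemma \ref{lem:subelliptic} (which itself follows the argument of \cite{ADVW00}); this is essentially a bookkeeping exercise rather than a new analytic difficulty.
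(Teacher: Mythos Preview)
Your proposal is correct and follows essentially the same route as the paper: the paper's entire argument is the sentence ``Applying the remark to the constant $C_g'$ in Lemma \ref{lem:subelliptic}, we arrive at'' Corollary \ref{cor:subelliptic}, and that remark records precisely the three ingredients you list (conservation of mass, the energy bound $\|g\|_{L^1_1}\leq\|g_0\|_{L^1_2}$, and the $L\log L$ estimate via entropy dissipation plus moments), together with the monotonicity of $C_g'$ in $\|g\|_{L^1}$, $\|g\|_{L^1_1}^{-1}$, $\|g\|_{L\log L}^{-1}$. One small point worth making explicit: the substitution $C_g\rightsquigarrow C_{g_0}$ in the \emph{negative} term $C_g(\log R)^{\mu+1}\|f\|_{L^2}^2$ is not an automatic consequence of $C_g\geq C_{g_0}$; it works because one replaces $C_g'$ by the smaller $C_{g_0}'$ in the nonnegative integral over $\{|\eta|\geq R\}$ \emph{before} splitting it into the full integral minus the low-frequency part, exactly as the paper's phrasing (``applying the remark to the constant $C_g'$ in Lemma \ref{lem:subelliptic}'') suggests.
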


\begin{proof}[Proof of Proposition \ref{prop:subelliptic}]
	We have $\langle Q(g,f),f\rangle = \Re\langle Q(g,f),f\rangle$ and by Bobylev's identity, 
	\begin{align*}
		- \Re \langle Q(g,f), f\rangle &= \Re \int_{\R^d\times \S^{d-1}} b\left(\frac{\eta}{|\eta|}\cdot\sigma\right)  \,\left[ \overline{\hat{g}(0)\hat{f}(\eta)} - \overline{\hat{g}(\eta^-)\hat{f}(\eta^+)} \right]\, \hat{f}(\eta) \, \D{\sigma}\D{\eta} \\
		&= \frac{1}{2} \int_{\R^d\times \S^{d-1}} b\left(\frac{\eta}{|\eta|}\cdot\sigma\right) \, \left\langle \begin{pmatrix} \hat{f}(\eta) \\ \hat{f}(\eta^+)\end{pmatrix}, \begin{pmatrix}
			2\hat{g}(0) & - \hat{g}(\eta^-) \\ -\overline{\hat{g}(\eta^-)} & 0 
		\end{pmatrix} \begin{pmatrix} \hat{f}(\eta) \\ \hat{f}(\eta^+)\end{pmatrix} \right\rangle_{\C^2} \,\D\sigma \D\eta \\
		&= \frac{1}{2} \int_{\R^d\times \S^{d-1}} b\left(\frac{\eta}{|\eta|}\cdot\sigma\right) \, \left\langle \begin{pmatrix} \hat{f}(\eta) \\ \hat{f}(\eta^+)\end{pmatrix}, \begin{pmatrix}
			\hat{g}(0) & - \hat{g}(\eta^-) \\ -\overline{\hat{g}(\eta^-)} & \hat{g}(0) 
		\end{pmatrix} \begin{pmatrix} \hat{f}(\eta) \\ \hat{f}(\eta^+)\end{pmatrix} \right\rangle_{\C^2} \,\D\sigma \D\eta \\
		&\quad - \frac{1}{2} \int_{\R^d\times \S^{d-1}} b\left(\frac{\eta}{|\eta|}\cdot\sigma\right) \, \left\langle \begin{pmatrix} \hat{f}(\eta) \\ \hat{f}(\eta^+)\end{pmatrix}, \begin{pmatrix}
			-\hat{g}(0) & 0 \\ 0 & \hat{g}(0) 
		\end{pmatrix} \begin{pmatrix} \hat{f}(\eta) \\ \hat{f}(\eta^+)\end{pmatrix} \right\rangle_{\C^2} \,\D\sigma \D\eta \\
		&=: I_1 - I_2.
	\end{align*}
To estimate $I_2= \frac{1}{2} \int_{\R^d\times\S^{d-1}} b\left(\frac{\eta}{|\eta|}\cdot\sigma\right)\, \hat{g}(0) \left( |\hat{f}(\eta^+)|^2 - |\hat{f}(\eta)|^2 \right)\,\D\sigma\D\eta$, we do a change of variables $\eta^+ \to \eta$ as in \cite{ADVW00} in the first part, treating $b$ as if it were integrable, and using a limiting argument to make the calculation rigorous (this is a version of the cancellation lemma of \cite{ADVW00} on the Fourier side). We then obtain with $\hat{g}(0) = \|g\|_{L^1}$ 
\begin{align*}
	I_2 = |\S^{d-2}| \int_0^{\pi/2} \sin^{d-2}\theta\, b(\cos\theta) \left[ \frac{1}{\cos^d\frac{\theta}{2}} - 1 \right]\,\D\theta \, \|g\|_{L^1(\R^d)} \, \|f\|_{L^2(\R^d)}^2.
\end{align*}
In particular, since $\frac{1}{\cos^d\frac{\theta}{2}} - 1 = \frac{d}{8} \theta^2 + \mathcal{O}(\theta^3)$, the $\theta$-integral is finite and it follows that 
	\begin{align*}
		|I_2| \leq C \|g\|_{L^1(\R^d)} \, \|f\|_{L^2(\R^d)}^2.
	\end{align*}
For the integral $I_1$, we note that since $g\geq 0$, the matrix in $I_1$ is positive definite by Bochner's theorem and has the lowest eigenvalue $\hat{g}(0) - |\hat{g}(\eta^-)|$. Therefore,
\begin{align*}
	I_1 &\geq \frac{1}{2} \int_{\R^d\times \S^{d-1}} b\left(\frac{\eta}{|\eta|}\cdot\sigma\right) \, \left( \hat{g}(0) - |\hat{g}(\eta^-)| \right) \left( |\hat{f}(\eta)|^2 + |\hat{f}(\eta^+)|^2 \right) \,\D\sigma\D\eta\\
		&\geq \frac{1}{2} \int_{\R^d} |\hat{f}(\eta)|^2 \int_{\S^{d-1}} b\left(\frac{\eta}{|\eta|}\cdot\sigma\right) \, \left( \hat{g}(0) - |\hat{g}(\eta^-)| \right)  \,\D\sigma\D\eta
\end{align*}
and by Lemma \ref{lem:subelliptic},
\begin{align*}
	I_1 &\geq \frac{C_g'}{2} \int_{\{|\eta|\geq R\}} |\hat{f}(\eta)|^2 \left(\frac{\log\langle\eta\rangle_{\alpha}}{\log(\alpha+\E)}\right)^{\mu+1}\,\D\eta \\
	&\geq \frac{C_g'}{2(\log(\alpha +\E))^{\mu+1}} \left\|\left(\log\langle D_v\rangle_{\alpha}\right)^{\frac{\mu+1}{2}} f \right\|_{L^2}^2 - \frac{C_g'}{2} \left(\frac{\log(\alpha+R^2)}{2\log(\alpha+\E)}\right)^{\mu+1} \|f\|_{L^2} \\
	&\geq \frac{C_g'}{2(\log(\alpha +\E))^{\mu+1}} \left\|\left(\log\langle D_v\rangle_{\alpha}\right)^{\frac{\mu+1}{2}} f \right\|_{L^2}^2 - \frac{C_g'}{2} \left(\log R\right)^{\mu+1} \|f\|_{L^2}.
\end{align*}
In the last inequality we used the fact that for $R \geq \sqrt{\E}$ the function $\alpha \mapsto \frac{\log(\alpha+R^2)}{2\log(\alpha+\E)}$ is decreasing. Combining the estimates of $I_1$ and $I_2$ and setting $C_g = C_g'/2$, we arrive at the claimed sub-elliptic estimate for the Boltzmann operator with Debye-Yukawa singularity.
\end{proof}

It remains to give the 
\begin{proof}[Proof of Lemma \ref{lem:subelliptic}]
	Since $g\geq 0$, $g\in L^1_1\cap L\log L$, there exists a constant $\widetilde{C}_g>0$ such that for all $\eta\in\R^d$ 
	\begin{align*}
		\hat{g}(0) - |\hat{g}(\eta)| \geq \widetilde{C}_g \left(|\eta|^2 \land 1\right).
	\end{align*}
	It is therefore enough to bound $\int_{\S^{d-1}} b(\frac{\eta}{|\eta|}\cdot \sigma) (|\eta^-|^2 \land 1)\,\D\sigma$. Recall that $|\eta^-|^2 = \frac{|\eta|^2}{2} \left(1-\frac{\eta}{|\eta|}\cdot\sigma\right)$, and, choosing spherical coordinates with pole $\frac{\eta}{|\eta|}$ such that $\frac{\eta}{|\eta|}\cdot\sigma = \cos\theta$, we obtain
	\begin{align*}
		\int_{\S^{d-1}} b\left(\frac{\eta}{|\eta|}\cdot\sigma\right) \left(|\eta^-|^2\land 1 \right) \,\D\sigma &= |\S^{d-2}|\int_{0}^{\frac{\pi}{2}} \sin^{d-2}\theta \, b(\cos\theta) \, \left(|\eta|^2 \sin^2\frac{\theta}{2} \land 1 \right)\,\D\theta \\
		&\geq \frac{|\S^{d-2}|}{4\pi^2} \int_0^{\frac{\pi}{2}} \sin^{d-2}\theta \, b(\cos\theta) \, \left(|\eta|^2 \theta^2 \land 1 \right)\,\D\theta.
	\end{align*}
	By the assumption \eqref{eq:singularity} on the singularity for grazing collisions on $b$, there exists a $\theta_0>0$ small enough such that 
	\begin{align*}
		\frac{|\S^{d-2}|}{4\pi^2} \int_0^{\frac{\pi}{2}} \sin^{d-2}\theta \, b(\cos\theta) \, \left(|\eta|^2 \theta^2 \land 1 \right)\,\D\theta &\geq 
		\frac{\kappa}{2} \frac{|\S^{d-2}|}{4\pi^2} \int_0^{\theta_0} \theta^{-1} \left(\log\theta^{-1}\right)^{\mu}\, \left(|\eta|^2 \theta^2 \land 1 \right)\,\D\theta.
	\end{align*}
	Let $R>0$ be large enough, such that $\frac{1}{R} < \theta_0$. Then for $|\eta|\geq R$ we have 
	\begin{align*}
		\frac{\kappa}{2} \frac{|\S^{d-2}|}{4\pi^2} \int_0^{\theta_0} \theta^{-1} \left(\log\theta^{-1}\right)^{\mu}\, \left(|\eta|^2 \theta^2 \land 1 \right)\,\D\theta &\geq 
		\frac{\kappa}{2} \frac{|\S^{d-2}|}{4\pi^2} \int_{\frac{1}{|\eta|}}^{\theta_0} \theta^{-1} \left(\log\theta^{-1}\right)^{\mu}\,\D\theta \\
		&= \frac{\kappa}{2} \frac{|\S^{d-2}|}{4\pi^2} \frac{1}{\mu+1} \left[ \left(\log|\eta|\right)^{\mu+1} - \left( \log\frac{1}{\theta_0} \right)^{\mu+1} \right] \\
		&\geq C \left(\log|\eta|\right)^{\mu+1}
	\end{align*}
	for some constant $C>0$ depending only on the dimension and the collision kernel $b$. We conclude by noting that for all $|\eta|\geq \sqrt{e}$ one has
	\begin{align*}
		\log|\eta| = \frac{1}{2}\log|\eta|^2 \geq \frac{\log\langle\eta\rangle_{\alpha}}{\log(\E+\alpha)},
	\end{align*}
	since for any $\alpha\geq 0$ the function $[\E, \infty) \ni s \mapsto H(s)\coloneq  \log s - \frac{\log(\alpha+s)}{\log(\alpha+\E)}$ is non-decreasing with $H(e) = 0$. 
\end{proof}

\section{Properties of the function spaces $\mathcal{A}^{\mu}$}\label{app:analyticity}

We prove a precise correspondence between the decay in Fourier space and the growth rate of derivatives of functions in $\mathcal{A}^{\mu}$. 

\begin{theorem}
	Let $\mu>0$. Then 
	\begin{align*}
		\mathcal{A}^{\mu}(\R^d) = \bigcup_{\tau>0} \mathcal{D}\left( \E^{\tau \left(\log\langle D\rangle\right)^{\mu+1}}: L^2(\R^d) \right).
	\end{align*}
\end{theorem}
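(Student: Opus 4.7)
The proof requires establishing both inclusions, and both rely on Plancherel's theorem. For the inclusion $\bigcup_{\tau>0}\mathcal{D}(\E^{\tau(\log\langle D\rangle)^{\mu+1}}:L^2(\R^d))\subseteq \mathcal{A}^\mu(\R^d)$, my plan is to start from
$$\|\partial^\alpha f\|_{L^2}^2 = (2\pi)^{2|\alpha|}\int |\eta^\alpha|^2|\hat f(\eta)|^2\,\D\eta\leq (2\pi)^{2|\alpha|}\int \langle\eta\rangle^{2|\alpha|}|\hat f(\eta)|^2\,\D\eta,$$
and split the integration at the threshold $R_\alpha \coloneq \E^{(|\alpha|/\tau)^{1/\mu}}$. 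On $\{\langle\eta\rangle\leq R_\alpha\}$ one has the trivial bound $\langle\eta\rangle^{|\alpha|}\leq \E^{\tau^{-1/\mu}|\alpha|^{1+1/\mu}}$, while on $\{\langle\eta\rangle>R_\alpha\}$ the definition of $R_\alpha$ yields $|\alpha|\log\langle\eta\rangle\leq \tau(\log\langle\eta\rangle)^{\mu+1}$, hence $\langle\eta\rangle^{|\alpha|}\leq \E^{\tau(\log\langle\eta\rangle)^{\mu+1}}$. Combining these, and absorbing the polynomial prefactor via $(2\pi)^{|\alpha|}\leq \E^{\log(2\pi)|\alpha|^{1+1/\mu}}$ for $|\alpha|\geq 1$, produces the defining estimate of $\mathcal{A}^\mu$ with $b=\tau^{-1/\mu}+\log(2\pi)$. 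Note also that any such $f$ lies automatically in $H^\infty$, since $\E^{\tau(\log\langle\eta\rangle)^{\mu+1}}$ dominates every polynomial.

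For the converse inclusion the strategy is first to derive polynomial moment bounds in Fourier space, then sharpen them via optimisation. Using the multinomial identity $|\eta|^{2n}=\sum_{|\alpha|=n}\binom{n}{\alpha}\eta^{2\alpha}$ together with $\sum_{|\alpha|=n}\binom{n}{\alpha}=d^n$ and Plancherel, the $\mathcal{A}^\mu$-hypothesis gives
$$\int|\eta|^{2n}|\hat f(\eta)|^2\,\D\eta = (2\pi)^{-2n}\sum_{|\alpha|=n}\binom{n}{\alpha}\|\partial^\alpha f\|_{L^2}^2 \leq C^2 D^n\E^{2bn^{1+1/\mu}}$$
for every $n\in\N_0$, with $D=dC^2/(2\pi)^2$. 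Chebyshev's inequality then yields $\int_{|\eta|\geq R}|\hat f|^2\,\D\eta\leq R^{-2n}C^2D^n\E^{2bn^{1+1/\mu}}$, and the key step is to minimise the exponent $-2n\log R+n\log D+2bn^{1+1/\mu}$ over $n\in\N_0$. The optimal real value is close to $n_\ast=\bigl(\mu(2\log R-\log D)/(2b(\mu+1))\bigr)^\mu$; rounding to the nearest integer and estimating the residual error yields a Fourier-tail bound of the form $\int_{|\eta|\geq R}|\hat f|^2\,\D\eta\leq A\,\E^{-\kappa(\log R)^{\mu+1}}$ with constants $A,\kappa>0$ depending only on $b,d,\mu$ and valid for all $R$ above some threshold $R_0$.

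The proof is then completed by a dyadic decomposition of frequency space into shells $A_j=\{2^j\leq \langle\eta\rangle<2^{j+1}\}$: on $A_j$ the weight satisfies $\E^{2\tau(\log\langle\eta\rangle)^{\mu+1}}\leq \E^{2\tau((j+1)\log 2)^{\mu+1}}$, while the tail estimate gives $\int_{A_j}|\hat f|^2\leq A\,\E^{-\kappa(j\log 2)^{\mu+1}}$ for $j$ large enough. Summing produces a series whose $j$-th term behaves like $\E^{(\log 2)^{\mu+1}(2\tau-\kappa)j^{\mu+1}+O(j^\mu)}$, which converges provided $\tau<\kappa/2$. The main technical obstacle is the optimisation step: extracting the sharp $(\log R)^{\mu+1}$ decay rate from a sequence of polynomial moment bounds requires carefully balancing $R^{-2n}$ against $\E^{2bn^{1+1/\mu}}$. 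Note also that the maximal admissible $\tau$ depends on the $\mathcal{A}^\mu$-constants $C,b$ of $f$ (via $\kappa$), so the union over $\tau>0$ on the right-hand side is essential.
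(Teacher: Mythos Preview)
Your argument is correct in both directions. For the inclusion $\mathcal{A}^{\mu}\subseteq \bigcup_{\tau>0}\mathcal{D}(\ldots)$ your approach is essentially the paper's: both derive the moment bound $\int|\eta|^{2n}|\hat f|^2\le C^2 D^n\E^{2bn^{1+1/\mu}}$ via the multinomial identity and Plancherel, then optimise over $n$ to obtain a Fourier-tail decay $\E^{-\kappa(\log R)^{\mu+1}}$. The only cosmetic difference is in how this tail bound is converted into finiteness of $\|\E^{\tau(\log\langle D\rangle)^{\mu+1}}f\|_{L^2}$: you sum over dyadic shells $\{2^j\le\langle\eta\rangle<2^{j+1}\}$, whereas the paper writes the weight as $\E^{2\tau(\log\langle\eta\rangle)^{\mu+1}}=1+\int_1^{\langle\eta\rangle}\partial_t\bigl(\E^{2\tau(\log t)^{\mu+1}}\bigr)\D t$ and integrates the tail bound against this layer-cake representation. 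These are interchangeable standard devices.

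For the reverse inclusion your route is genuinely more elementary than the paper's. The paper also writes $\int\langle\eta\rangle^{2n}|\hat f|^2$ via a layer-cake formula, bounds the tail $\nu_f(\{\langle\eta\rangle>t\})$ by $\E^{-2\tau(\log t)^{\mu+1}}\|\E^{\tau(\log\langle D\rangle)^{\mu+1}}f\|_{L^2}^2$, and is then left with evaluating $\int_1^\infty t^{2n-1}\E^{-2\tau(\log t)^{\mu+1}}\D t$ by Laplace's method, which forces a case distinction between $\mu\ge 1$ and $0<\mu<1$ (the second derivative of $t-t^{\mu+1}$ is no longer globally monotone in the latter regime). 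Your threshold splitting at $R_\alpha=\E^{(|\alpha|/\tau)^{1/\mu}}$ sidesteps all of this: on $\{\langle\eta\rangle\le R_\alpha\}$ you read off $\langle\eta\rangle^{|\alpha|}\le\E^{\tau^{-1/\mu}|\alpha|^{1+1/\mu}}$ directly, and on the complement the weight dominates $\langle\eta\rangle^{|\alpha|}$ by construction. What the paper's approach buys in exchange for the extra work is a sharper constant, namely $b=\tau^{-1/\mu}\mu(\mu+1)^{-(1+1/\mu)}$ rather than your $b=\tau^{-1/\mu}+\log(2\pi)$; but since the statement only asserts membership in $\mathcal{A}^{\mu}$ and not the optimal constants, your shortcut is entirely adequate.
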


Invoking a classic theorem by Denjoy and Carleman (see, for instance, \cite{Coh68,KP02,Rud86}) one can show that the classes $\mathcal{A}^{\mu}$ for $\mu>0$ are not quasi-analytic, that is, they contain non-vanishing $\mathcal{C}^{\infty}$ functions of arbitrarily small support.

\begin{proof}
	Let $\mu>0$ be fixed and assume first that $\|\E^{\tau \left(\log\langle D\rangle\right)^{\mu+1}}f\|_{L^2}<\infty$ for some $\tau>0$. Let $\alpha\in\N_0^d$ with $|\alpha|=n$ for some $n\in\N_0$. Then
	\begin{align*}
		\|\partial^{\alpha} f\|_{L^2}^2 &= \int_{\R^d} |(2\pi\I \eta)^{\alpha} \hat{f}(\eta)|^2\,\D\eta \leq (2\pi)^{2n} \int_{\R^d} \langle\eta\rangle^{2n} |\hat{f}|^{2}\,\D\xi \\
		&= 2n (2\pi)^{2n} \int_0^{\infty} t^{2n-1} \nu_f(\{\langle\eta\rangle>t\}) \,\D{t}
	\end{align*}
	where we introduced the (finite) measure $\nu_f(\D\eta)\coloneq  |\hat{f}(\eta)|^2\,\D\eta$. Since $\langle\eta\rangle \geq 1$ for all $\eta\in\R^d$, one has 
	\begin{align*}
		\nu_f(\{\langle\eta\rangle>t\}) = \nu_f(\R^d) = \|f\|_{L^2(\R^d)}^2 \quad \text{for } t<1.
	\end{align*} 
	For $t\geq 1$ we estimate
	\begin{align*}
		\nu_f(\{\langle\eta\rangle>t\}) \leq \E^{-2\tau (\log t)^{\mu+1}} \int_{\R^d} \E^{2\tau (\log\langle \eta\rangle)^{\mu+1}} \nu_f(\D\eta) = \E^{-2\tau (\log t)^{\mu+1}} \left\| \E^{\tau (\log \langle D\rangle)^{\mu+1}}f\right\|_{L^2(\R^d)}^2,
	\end{align*}
	since $1\leq t \mapsto \E^{2\tau (\log t)^{\mu+1}}$ is increasing. It follows that 
	\begin{align}\label{eq:pre-laplace}
		\|\partial^{\alpha} f\|_{L^2}^2 &\leq (2\pi)^{2n} \|f\|_{L^2(\R^d)}^2 + 2n (2\pi)^{2n} \left\| \E^{\tau (\log \langle D\rangle)^{\mu+1}}f\right\|_{L^2(\R^d)}^2 \int_1^{\infty} t^{2n-1} \E^{-2\tau (\log t)^{\mu+1}} \,\D{t}.
	\end{align}
	To extract the required growth in $n$ from the latter integral, we essentially apply Laplace's method. Indeed, substituting the logarithm and rescaling suitably yields 
	\begin{align}\label{eq:taylor}
		\int_1^{\infty} t^{2n-1} \E^{-2\tau (\log t)^{\mu+1}} \,\D{t} = \left(\frac{n}{\tau}\right)^{1/\mu} \int_0^{\infty} \E^{2\tau^{-1/\mu} n^{1+1/\mu} (t-t^{\mu+1})}\,\D{t}.
	\end{align}
	The function $h: (0, \infty) \to \R$, $h(t) \coloneq  t-t^{\mu+1}$ is strictly concave and attains its maximum at $t_* = (\mu+1)^{-1/\mu}$. If $\mu\geq 1$, $h''$ is negative and decreasing, so by Taylor's theorem we can bound 
	\begin{align*}
		h(t) \leq h(t_*) + \frac{h''(t_*)}{2} (t-t_*)^2 \1_{\{t>t_*\}}
	\end{align*}
	and obtain with $h(t_*) = \mu (\mu+1)^{-(1+1/\mu)}$, $h''(t_*) = - \mu (\mu+1)^{1/\mu}$,
	\begin{align}\label{eq:laplace}
		\left(\frac{n}{\tau}\right)^{1/\mu} &\int_0^{\infty} \E^{2\tau^{-1/\mu} n^{1+1/\mu} (t-t^{\mu+1})}\,\D{t} \\ &\leq \left(\frac{n}{\tau}\right)^{1/\mu} \left( (\mu+1)^{-1/\mu} + \frac{\sqrt{\pi}}{2\sqrt{\mu}} \left(\frac{\tau}{\mu+1}\right)^{\frac{1}{2\mu}} n^{-\frac{\mu+1}{2\mu}}\right) \E^{2\tau^{-1/\mu} \mu (\mu+1)^{-(1+1/\mu)} n^{1+1/\mu}}.
	\end{align}
Therefore, inserting the obtained bound into \eqref{eq:pre-laplace}, there exist constants $C>0$ and $b>0$, depending on $\tau$ and $\mu$, such that 
	\begin{align}\label{eq:derivatives-app}
		\left\| \partial^{\alpha} f \right\|_{L^2} \leq C^{|\alpha|+1} \E^{b |\alpha|^{1+1/\mu}} \quad \text{for all } \alpha\in\N_0^d.
	\end{align}
For $\mu\in(0,1)$ the global bound \eqref{eq:taylor} does not hold, but, as in the proof of Laplace's method for the asymptotics of integrals, one can find a suitable $\delta>0$ such that the bound \eqref{eq:taylor} holds on $[t_*-\delta, t_*+\delta]$ and the contribution to the integral outside of this interval is of much smaller order. So the right hand side of \eqref{eq:laplace} still provides an upper bound modulo lower order terms and we conclude \eqref{eq:derivatives-app} also in this case.

For the converse assume that \eqref{eq:derivatives-app} holds. We want to show that there exists a $\tau>0$ such that $\E^{\tau(\log\langle D\rangle)^{\mu+1}} f\in L^2(\R^d)$. Using that 
\begin{align*}
	\E^{2\tau (\log\langle\eta\rangle)^{\mu+1}} = 1+ \int_1^{\langle\eta\rangle} 2\tau (\mu+1) t^{-1} (\log t)^{\mu} \E^{2\tau (\log t)^{\mu+1}} \,\D{t}
\end{align*}
one obtains 
\begin{align}\label{eq:reversebound}
	\left\|\E^{\tau (\log\langle D\rangle)^{\mu+1}} f\right\|_{L^2(\R^d)}^2 = \|f\|_{L^2(\R^d)}^2 + \int_1^{\infty} 2\tau (\mu+1) t^{-1} (\log t)^{\mu} \E^{2\tau (\log t)^{\mu+1}} \, \nu_f(\{ \langle \eta \rangle >t\})\,\D{t}.
\end{align}
Next we estimate for $t>1$ and any $n\in\N_0$, since $|\eta|^2 \geq t^2 -1$ on $\{ \langle\eta\rangle >t\}$,
\begin{align*}
	\nu_f(\{ \langle\eta\rangle >t\} \leq \frac{1}{(2\pi)^{2n} (t^2-1)^{n}} \int_{\R^d} (2\pi)^{2n} |\eta|^{2n} |\hat{f}|^2 \,\D\eta.
\end{align*}
By the multinomial theorem, we have (in the standard multi-index notation)
\begin{align*}
	|\eta|^{2n} = \left(\sum_{i=1}^d \eta_i^2\right)^n = \sum_{\alpha\in\N_0^d: |\alpha|=n} \binom{n}{\alpha} \, \eta^{2\alpha},
\end{align*}
so 
\begin{align*}
	\nu_f(\{ \langle\eta\rangle >t\} &\leq \frac{1}{(2\pi)^{2n} (t^2-1)^{n}} \sum_{|\alpha| = n} \binom{n}{\alpha} \int_{\R^d} \left| (2\pi\I\eta)^{\alpha} \hat{f}(\eta)\right|^2 \,\D\eta \\
	&= \frac{1}{(2\pi)^{2n} (t^2-1)^{n}} \sum_{|\alpha| = n} \binom{n}{\alpha} \|\partial^{\alpha} f\|_{L^2(\R^d)}^2 
	\leq \frac{d^n C^{2n+2}}{(2\pi)^{2n}} \frac{1}{(t^2-1)^n} \E^{2 b n^{1+1/\mu}}
\end{align*}
by assumption.
Since this holds for any $n\in\N_0$, we even have
\begin{align*}
	\nu_f(\{ \langle\eta\rangle >t\} &\leq \exp\left[\inf_{n\in\N_0} \left(2n\log A - n\log (t^2-1) + 2 b n^{1+1/\mu}\right)\right] \\
	&= \exp\left[2 \inf_{n\in\N_0} \left(b n^{1+1/\mu} - n\log\frac{\sqrt{t^2-1}}{A} \right)\right]
\end{align*}
where for notational convenience we set $A = \frac{C^{n+1}\sqrt{d} }{2\pi}$. If $\sqrt{t^2-1} <A$, then the infimum in the above exponent is just zero, so $\nu(\{ \langle\eta\rangle >t \}) \leq 1$ in this case. If, however, $\sqrt{t^2-1} \geq A$, we get 
\begin{align*}
	\inf_{n\in\N_0} \left(b n^{1+1/\mu} - n\log\frac{\sqrt{t^2-1}}{A} \right) \leq b n_*^{1+1/\mu} - n_*\log\frac{\sqrt{t^2-1}}{A}
\end{align*}
where $n_* = \left\lfloor \left(\frac{1}{b} \frac{\mu}{\mu+1} \log\frac{\sqrt{t^2-1}}{A} \right)^{\mu} \right\rfloor$, and $\lfloor a \rfloor$ denotes the greatest integer smaller or equal to $a\in\R$. Obviously, 
\begin{align*}
	n_* \leq \left(\frac{1}{b} \frac{\mu}{\mu+1} \log\frac{\sqrt{t^2-1}}{A} \right)^{\mu} < n_*+1,
\end{align*}
so we get the bound 
\begin{align*}
	\inf_{n\in\N_0} \left(b n^{1+1/\mu} - n\log\frac{\sqrt{t^2-1}}{A} \right) \leq - \left(\frac{\mu}{\beta}\right)^{\mu} \left( \frac{1}{\mu+1} \log \frac{\sqrt{t^2-1}}{A}\right)^{\mu+1} + \log\frac{\sqrt{t^2-1}}{A}.
\end{align*}
In particular, there exists $T_*>1$ and $\beta >0$ such that for $t>T_*$, one has
\begin{align*}
	\inf_{n\in\N_0} \left(b n^{1+1/\mu} - n\log\frac{\sqrt{t^2-1}}{A} \right) \leq - \beta (\log t)^{\mu+1}.
\end{align*}
This shows,
\begin{align*}
	\nu_f(\{\langle \eta\rangle >t \}) \leq \E^{-\beta (\log t)^{\mu+1}}
\end{align*}
 for large enough $t$, and choosing $\tau<\beta/2$ in \eqref{eq:reversebound} we get the finiteness of $\left\|\E^{\tau (\log\langle D\rangle)^{\mu+1}} f\right\|_{L^2(\R^d)}$.
\end{proof}

\bigskip
\noindent\textbf{Acknowledgements.} It is a pleasure to thank the REB program of CIRM for giving us the opportunity to start this research.
	J.-M.~B.\ was partially supported by the project SQFT ANR-12-JS01-0008-01. D.~H., T.~R., and S.~V.\ gratefully acknowledge financial support by the Deutsche Forschungsgemeinschaft (DFG) through CRC 1173. D.~H.\ also thanks the Alfried Krupp von Bohlen und Halbach Foundation for financial support.
	Furthermore, we thank the University of Toulon and the Karlsruhe Institute of Technology for their hospitality.

\bibliographystyle{aomplain}
\bibliography{boltzmann}

\vfill \noindent
\textbf{Jean-Marie Barbaroux}\\
\textsc{Aix-Marseille Universit\'e, CNRS, CPT, UMR 7332, 13288 Marseille, France}\\
et \textsc{Universit\'e de Toulon, CNRS, CPT, UMR 7332, 83957 La Garde, France}\\
\textit{E-mail}: \href{mailto:barbarou@univ-tln.fr}{barbarou@univ-tln.fr}\\
\textbf{Dirk Hundertmark}\\
\textsc{Karlsruhe Institute of Technology, Englerstra{\ss}e 2, 76131 Karlsruhe, Germany}\\
\textit{E-mail}: \href{mailto:dirk.hundertmark@kit.edu}{dirk.hundertmark@kit.edu}\\
\textbf{Tobias Ried}\\
\textsc{Karlsruhe Institute of Technology, Englerstra{\ss}e 2, 76131 Karlsruhe, Germany}\\
\textit{E-mail}: \href{mailto:tobias.ried@kit.edu}{tobias.ried@kit.edu}\\
\textbf{Semjon Vugalter}\\
\textsc{Karlsruhe Institute of Technology, Englerstra{\ss}e 2, 76131 Karlsruhe, Germany}\\
\textit{E-mail}: \href{mailto:semjon.wugalter@kit.edu}{semjon.wugalter@kit.edu}
\end{document}